\newcommand{\R}{{\mathbb R}}
\newcommand{\C}{{\mathbb C}}
\newcommand{\N}{{\mathbb N}}
\newcommand{\Z}{{\mathbb Z}}
\newcommand{\dd}{{\rm d}}
\theoremstyle{definition}
\newtheorem{theorem}{Theorem}
\newtheorem{definition}[theorem]{Definition}
\newtheorem{corollary}[theorem]{Corollary}
\newtheorem{lemma}[theorem]{Lemma}
\newtheorem{proposition}[theorem]{Proposition}
\newtheorem{remark}[theorem]{Remark}
\numberwithin{theorem}{section}
\title{Generalized eigenvalues of the Perron-Frobenius operators of 
symbolic dynamical systems}
\date{\today}
\author{Hayato Chiba\thanks{Advanced Institute for Materials Research,
    Tohoku  University, Sendai, 980-8557, Japan, {\tt hchiba@tohoku.ac.jp}}\,,
  \quad 
Masahiro Ikeda\thanks{Center for Advanced Intelligence Project, RIKEN, Tokyo, 103-0027, Japan / Department of Mathematics, Keio University, Yokohama, 223-8522, Japan, 
{\tt masahiro.ikeda@riken.jp}} 
  \quad and\quad
Isao Ishikawa\thanks{Center for Data Science, Ehime University, Matsuyama, 790-8577, Japan / Center for Advanced Intelligence Project, RIKEN, Tokyo, 103-0027, Japan
{\tt ishikawa.isao.zx@ehime-u.ac.jp}}   
}
\begin{document}
\maketitle

\begin{abstract}
The generalized spectral theory is an effective approach to analyze a linear operator on a Hilbert space $\mathcal{H}$ with a continuous spectrum.
The generalized spectrum is computed via analytic continuations of the resolvent operators using a dense locally convex subspace $X$ of $\mathcal{H}$ and its dual space $X'$. The three topological spaces $X \subset \mathcal{H} \subset X'$ is called the rigged Hilbert space or the Gelfand triplet.
In this paper, the generalized spectra of the Perron-Frobenius operators of the one-sided and two-sided shifts of finite types (symbolic dynamical systems) are determined.
A one-sided subshift of finite type which is conjugate to the multiplication with the golden ration on $[0,1]$ modulo $1$ is also considered. 
A new construction of the Gelfand triplet for the generalized spectrum of symbolic dynamical systems is proposed by means of an algebraic procedure.
The asymptotic formula of the iteration of Perron-Frobenius operators is also given. The iteration converges to the mixing state whose rate of convergence is determined by the generalized spectrum.
\end{abstract}

\section{Introduction}

The spectrum of a linear operator on a topological vector space is 
an important objective in any area of mathematics.
If an underlying space is a finite dimensional space, the spectrum set consists of eigenvalues, while on an infinite dimensional space, the spectrum set may include a continuous and a residual spectra.
From eigenvalues, a lot of information of a linear operator is obtained by using, such as eigenvectors, projections to eigenspaces and the residue theorem.
However, a continuous and a residual spectra are far from tractable because how to treat them depends on the problem at hand and a standard method has not been established yet.

The generalized spectral theory based on a rigged Hilbert space is one of the methods to handle problems related to a continuous and a residual spectra.

Let $T$ be a linear operator on a Hilbert space $\mathcal{H}$ and $(\lambda -T)^{-1}$ its resolvent operator.
The resolvent set $\rho (T) \subset \C$ is defined such that $(\lambda -T)^{-1}$ is a continuous operator on $\mathcal{H}$ when $\lambda \in \rho (T)$.
The spectrum set is defined as its complement: $\sigma (T) = \C\backslash \rho(T)$.
It is well known that the resolvent $(\lambda -T)^{-1}$ is holomorphic with respect to $\lambda $
on the resolvent set $\rho(T)$, while is singular on $\sigma (T)$.
A basic idea of the generalized spectral theory is that by controlling the topology of the underlying vector space well,
a continuous and a residual spectra disappear, and generalized eigenvalues may appear instead, which play a similar role to the usual eigenvalues.

Specifically, let $X$ be a dense subspace of $\mathcal{H}$ whose topology is stronger than that of $\mathcal{H}$,
and $X'$ its dual space. Then, the three topological vector spaces
\begin{eqnarray*}
X \subset \mathcal{H} \subset X'
\end{eqnarray*}
is called the rigged Hilbert space or the Gelfand triplet.
In the generalized spectral theory, the domain of the resolvent $(\lambda -T)^{-1}$ is restricted to $X$ and the range is 
extended to $X'$.
By regarding the resolvent as the operator from $X$ into $X'$, under a suitable assumption,
it has an analytic continuation from the set $\rho(T)$ into the spectrum set $\sigma (T)$.
Then, the region on which $(\lambda -T)^{-1}$ is holomorphic becomes a nontrivial Riemann surface.
The analytic continuation of the resolvent is called the generalized resolvent.
The set of singularities of the analytic continuation of $(\lambda -T)^{-1}$ on the Riemann surface is called the generalized spectrum set.
A point of the generalized spectrum set is called the generalized eigenvalue if a corresponding eigenvector exists in the dual space $X'$.
Since the generalized eigenvalue lies on a Riemann surface and the associated eigenvector 
is not an element of $\mathcal{H}$, 
we can investigate phenomena that cannot be captured by the usual spectrum set based on the 
Hilbert space theory.
See Section 2 for the detail. 

Generalized eigenvalues have been well studied in quantum mechanics (in this area, a generalized eigenvalue is called a resonance or a resonance pole) \cite{AAD, Bo86, BG89, Chi3, Fro97, Hit, Sim, Zwo}.
It was defined as a pole of a resolvent, Green function, a scattering matrix, and so on.
Typically the imaginary part of a resonance describes the decay rate of waves.

A mathematical formulation of the generalized spectrum based on the Gelfand triplet was developed by \cite{BG89, Chi2, Gel2, Gel} and references therein. 

Applications to chaos in dynamical systems was first suggested by Ruelle \cite{Ru86}.
After his work, many applications to dynamical systems have been studied.
In Chiba \cite{Chi1, Chi5}, the generalized spectral theory is applied to the bifurcation of the Kuramoto model, which is the well known mathematical model for synchronization phenomena.
Since this system has the continuous spectrum on the imaginary axis and the standard bifurcation theory is not applicable, the analytic continuation of the resolvent operator from the right half plane to the left half plane was constructed and it is shown that the generalized eigenvalues on the Riemann surface determine the stability and bifurcation of synchronized states.
In \cite{Chi4}, a similar strategy was applied to a neuronal network to show the existence and stability of the gamma wave, one of the brain wave related to cognition and attention.

Applications to discrete dynamical systems (iteration of a transformation) are also attractive topics studied by many authors.

Let $T$ be a measure preserving continuous transformation of a probability space $(\Sigma, \mathcal{B}, \mu)$,
where $\Sigma$ is a compact metric space, $\mathcal{B}$ is a Borel $\sigma $-algebra on $\Sigma$ and $\mu$ is an invariant measure of $T$.
The Koopman operator (composition operator) $U_T$ of $T$ is defined by $(U_T f)(x) = f(Tx)$
for a measurable function $f : \Sigma \to \C$.
It satisfies
\begin{eqnarray*}
\int_{\Sigma} (U_Tf)(x) \overline{(U_Tg)(x)} \dd\mu =  \int_{\Sigma} f(x) \overline{g(x)} \dd\mu
\end{eqnarray*}
for $f,g \in L^2 (\Sigma, \mu)$; $U_T$ is an isometry on the Lebesgue space $L^2 (\Sigma, \mu)$.
The Perron-Frobenius operator (transfer operator) $V_T = U_T^*$ is defined as the adjoint of 
$U_T$ as $(V_Tf, g)_{L^2} = (f, U_Tg)_{L^2}$.
If $T$ is a homeomorphism on $\Sigma$, $U_T$ is unitary and $V_T = U_T^{-1} = U_{T^{-1}}$.
Even if the space $\Sigma$, on which a discrete dynamical system is defined, is a finite dimensional manifold (actually $\Sigma = [0,1]$ interval for most studies using the generalized spectral theory),
the Perron-Frobenius/Koopman operators are linear operators on an infinite dimensional space $L^2 (\Sigma, \mu)$ and they may have continuous spectra.
Indeed, it is known that the transformation $T$ is weak mixing if and only if the Koopman operator has a continuous spectrum~\cite{CFS, Wal}.

In 90's, there are many works on discrete dynamical systems on the $[0,1]$ interval from a viewpoint of the Gelfand triplet~\cite{ASS99, AT93, SATB96}, in particular, for the R\'{e}yni map~\cite{ADKM97, ASS99, AT92, HS92, HD94, Gas92} and the baker map~\cite{AT92a, HS92, HD94, Fox97}.

It is known that many dynamical systems on $[0,1]$ are topologically (semi-)conjugate to symbolic dynamical systems.
In this paper, spectral properties and the asymptotic behavior of the Perron-Frobenius operators of symbolic dynamical systems are studied by means of the generalized spectral theory.
Let $[\beta] := \{0, 1,\cdots ,\beta-1\}$ be a finite set of $\beta$ symbols equipped with a discrete topology.
The sets $\Sigma = [\beta]^{\Z}$ and $\Sigma_+ = [\beta]^{\N}$ of sequences from $[\beta]$ are metrizable spaces with the product topology.
The two-sided full shift of finite type $(\Sigma, S)$ is a pair of the space
\begin{eqnarray*}
\Sigma := \{ \omega = (\cdots , \omega _{-1}, \omega _0.\, \omega _1, \cdots )\, | \, \omega _i\in [\beta]\}
\end{eqnarray*}
and the left shift operator $(S\omega )_j = \omega _{j+1}$.
Let $A = (a_{ij})_{i,j}$ be an $n\times n$ adjacency matrix with entries in $\{ -1,1\}$.
The two-sided subshift of finite type $(\Sigma^A, S)$ is a space
\begin{eqnarray*}
\Sigma^A := \{ \omega = (\cdots , \omega _{-1}, \omega _0.\, \omega _1, \cdots )\, | \, 
   \omega _i\in [\beta],\, a_{\omega _j \omega _{j+1}} = 1 \}
\end{eqnarray*}
and the left shift operator $S$ on it.
The one-sided left shift of finite type is defined in a similar manner on
\begin{eqnarray*}
\Sigma_+ := \{ \omega = (\omega _1, \omega_2,  \cdots )\, | \, \omega _i\in [\beta]\},
\end{eqnarray*}
or 
\begin{eqnarray*}
\Sigma^A_+ := \{ \omega = (\omega _1, \omega_2,  \cdots )\, | \, 
   \omega _i\in [\beta],\, a_{\omega _j \omega _{j+1}} = 1 \}.
\end{eqnarray*}
Since the Perron-Frobenius operator $V_S$ of the shift map $S$ is isometry, the spectrum set is included in the unit disk. In fact, the Perron-Frobenius operators considered in this paper have the continuous spectra on the unit circle. In order to apply the generalized spectral theory, a topological vector space $X$ will be carefully chosen so that $X \subset  L^2 (\Sigma, \mu) \subset X'$ becomes a suitable Gelfand triplet.
Then, it is shown that the resolvent operator $(\lambda - V_S)^{-1}$ has an analytic continuation from the outside of the unit circle to the inside as an operator from $X$ into $X'$, although $(\lambda - V_S)^{-1}$ is singular on the unit circle as an operator on $L^2 (\Sigma, \mu)$. We will find infinitely many generalized eigenvalues inside the unit circle and give the algorithm to calculate them for the one-sided shift map including a nontrivial adjacency matrix in Section 3, and for the two-sided shift map in Section 4.
They describe the transient of the dynamics of the iteration of the Perron-Frobenius operators, in particular, the rate of convergence to mixing state.
Section 2 is devoted to a brief review of the generalized spectral theory according to \cite{Chi2}.


\section{The generalized spectral theory}

Let $X$ be a locally convex Hausdorff topological vector space over $\C$ and $X'$ its dual space.
$X'$ is the set of continuous anti-linear functionals on $X$.
For $\mu \in X'$ and $f \in X$, $\mu (f )$ is denoted by
$\langle \mu \,|\, f \rangle$.
For any $a,b \in \C,\, f, g \in X$ and $\mu, \xi \in X'$, the equalities
\begin{eqnarray*}
& & \langle \mu \,|\,  a f + bg\rangle 
   = \overline{a} \langle \mu \,|\,  f \rangle + \overline{b} \langle \mu \,|\, g \rangle, \\
& & \langle a\mu + b\xi \,|\, f \rangle
   = a \langle \mu \,|\, f \rangle + b \langle \xi \,|\, f \rangle,
\end{eqnarray*}
hold. 
The dual space $X'$ is equipped with the weak dual topology (weak * topology) ;
A sequence $\{ \mu_j\} \subset X'$ is said to be weakly convergent to $\mu \in X'$
if $\langle \mu_j \,|\, f \rangle \to \langle \mu  \,|\, f \rangle$ as $j\rightarrow\infty$ for each $f \in X$.

Let $\mathcal{H}$ be a Hilbert space with the inner product $(\cdot\, , \, \cdot)$ such that $X$ is a dense subspace of
$\mathcal{H}$.
Since a Hilbert space is isomorphic to its dual space, we obtain $\mathcal{H} \subset X'$ through $\mathcal{H} \simeq \mathcal{H}'$.
The isomorphism $\mathcal{H} \simeq \mathcal{H}'$ is defined so that $\langle g \,|\, f \rangle = (g, f)$
when $g\in \mathcal{H}$; that is, the dual pair is compatible with the inner product.
\\

\begin{definition}
If a locally convex Hausdorff topological vector space $X$ is a dense subspace of 
a Hilbert space $\mathcal{H}$ and the topology of $X$ is stronger than that of $\mathcal{H}$, the triplet
\begin{equation}
X \subset \mathcal{H} \subset X'
\end{equation}
is called the \textit{rigged Hilbert space} or the \textit{Gelfand triplet}.
\end{definition}

In this case, $\mathcal{H}$ is a dense subspace of the dual space
$X'$ and the topology of $\mathcal{H}$ is stronger than that of $X'$.

Let $T$ be a linear operator densely defined on $\mathcal{H}$.
The (Hilbert) adjoint $T^*$ of $T$ is defined through $(Tf, g) = (f , T^* g)$ as usual.
If $T^*$ preserves $X$ and is continuous on $X$, the dual operator $T^\times : X' \to X'$ of $T^*|_X$ defined through
\begin{eqnarray*}
\langle T^\times \mu \,|\, f \rangle = \langle \mu  \,|\, T^* f \rangle,\quad
f \in X,\, \mu\in X'
\end{eqnarray*}
is also continuous on $X'$ with respect to the weak dual topology.
We can show the equality $T^\times g = Tg$ for any $g \in X$, which implies that $T^\times $ is an extension of $T$.
To define a generalized spectrum set of $T$, we further assume that $X$ is a quasi-complete barreled space
(see Tr\'{e}ves \cite[Definition 34.1, Definition 33.1]{Tre} for the definitions), which is used to justify an integration of 
$X'$-valued complex functions \cite{Chi2}.
Any Fr\'{e}chet spaces, Banach spaces and Montel spaces satisfy this condition. 

Let $\rho (T)$ be a resolvent set of $T$; the resolvent operator $R_\lambda =(\lambda -T)^{-1}$
is a continuous operator on $\mathcal{H}$ when $\lambda \in \rho (T)$ 
and is holomorphic on $\rho (T)$.
The set $\sigma (T): = \C \backslash \rho (T)$ is called the spectrum set of $T$,
on which the resolvent operator is not 
defined as a continuous operator on $\mathcal{H}$.
For typical Perron-Frobenius operators considered in this paper, both of point spectra
(eigenvalues) and the continuous spectrum lie on the unit disk.
For an isolated eigenvalue $\lambda $, the Riesz projection to its eigenspace is given by
\begin{eqnarray*}
\Pi_\lambda [\phi] = \frac{1}{2\pi i} \oint_C (z-T)^{-1}\phi dz,\quad \phi \in \mathcal{H},
\end{eqnarray*}
where $C$ is a positively oriented closed curve, which encircles an eigenvalue $\lambda$
  but no other spectrum.
  The eigenspace of $\lambda$ is given by the image of $\Pi_\lambda$ as long as it is a finite dimensional space.
  
  However, the continuous spectrum has no corresponding eigenvectors that makes many problems difficult. To overcome this difficulty caused by the continuous spectrum, the generalized spectral theory has been developed.
For a suitable choice of a Gelfand triplet (depending on the problem at hand),
it is shown that if the resolvent $(\lambda -T)^{-1}$ is regarded as an operator from $X$ into $X'$
(i.e. the domain is restricted to $X$ and the range is extended to $X'$),
it has an analytic continuation beyond the continuous spectrum, which is called the  generalized resolvent operator.
The region, on which the resolvent $(\lambda -T)^{-1}$ is holomorphic, becomes a nontrivial Riemann surface and the continuous spectrum becomes its branch cut
\footnote{If the continuous spectrum is a curve, the edge point could be a branch point of the Riemann surface. For example, the continuous spectrum of the Laplace operator on $\R^m$ is the negative real axis and the edge point (the origin) becomes a branch point of the Riemann surface of the resolvent of type $z^{1/2}$ (resp. logarithmic) when $m$ is odd (resp. even) \cite{Chi3}.}. 
If the generalized resolvent has a singularity on the Riemann surface,
it is called the generalized spectrum.
It is also called the generalized eigenvalue, which plays a similar role to a usual eigenvalue, if the associated eigenvector exists in the dual space $X'$.
The Riesz projection for the isolated generalized eigenvalue is same as above but it is an operator from $X$ into $X'$ because so is the generalized resolvent.
We define the generalized eigenspace, which is a subspace of $X'$, as the image of the Riesz projection and the generalized eigenvectors are the elements of the generalized eigenspace.
It is known that there exists an element $\mu$ in the image of the generalized Riesz projection satisfying (\cite{Chi2} Theorem 3.5)
\begin{eqnarray*}
T^\times \mu = \lambda \mu, \quad \mu\in X'.
\end{eqnarray*}
That is, a generalized eigenvector is a true eigenvector of the dual operator.
Remark that not all the eigenvalues of the dual operator $T^\times : X' \to X'$ is a 
generalized eigenvalue in the sense of the generalized spectral theory.
Usually, the spectrum set of $T^\times : X' \to X'$ is too large and eigenspaces
are often infinite dimensional, so that it is not suitable to investigate the properties of $T$.
On the other hand, the number of the generalized eigenvalues is at most countable and their generalized eigenspaces are finite dimensional under a mild assumption (\cite{Chi2} Theorem 3.19).

For the Perron-Frobenius operator $V_T$ considered in this paper, 
its generalized resolvent $(\lambda -V_T)^{-1}: X \to X'$ has an analytic continuation from the outside 
of the unit circle to the inside beyond the continuous spectrum on the unit circle.
Then, we will find infinitely many generalized eigenvalues inside the unit circle. 


\section{Generalized eigenvalues of one-sided shifts} \label{sec: generalized spectrum of one-sided shifts}
In this section, we obtain the generalized eigenvalues of the Perron-Frobenius operator
of the left shift operator $(S\omega )_j := \omega _{j+1}$ on the one-sided shift space
\begin{equation}
\label{eq: def of Sigma_A}
\Sigma_+^A := \{ \omega = (\omega _1, \omega _2, \cdots )\, | \, 
   \omega _i\in [\beta], a_{\omega_i \omega _{i+1}} = 1 \},
\end{equation}
with the $\beta\times \beta$ adjacency matrix $A = (a_{ij})_{i,j}$ and a suitable invariant measure $\mu$ of $S$.
The Koopman operator $U_S$ is defined by
\begin{equation}
(U_S f)(\omega _1, \omega _2, \cdots ) := f(\omega _2, \omega _3, \cdots )
\end{equation}
and the Perron-Frobenius operator $V_S$ is given by its adjoint on $L^2 (\Sigma_+^A, \mu)$.

In Section \ref{subsec: full 2 shift} and \ref{subsec: full beta shift} the Bernoulli shift is studied; that is, $a_{ij} = 1$ for $\forall i,j$
and an invariant measure $\mu$ is a uniformly distributed Bernoulli measure in Section \ref{subsec: full 2 shift} and 
non-uniformly case in Section \ref{subsec: full beta shift}.
In Section \ref{subsec: partial shift}, the subshift defined by the adjacency matrix
\begin{equation}
A = \left(
\begin{array}{@{\,}cc@{\,}}
1 & 1 \\
1 & 0
\end{array}
\right)
\end{equation}
will be considered.
All cases are topological semi-conjugate to certain piecewise linear dynamical systems on the interval $[0,1]$.
A combination of these three results suggests how to obtain generalized eigenvalues for a general one-sided subshift of finite type.
\subsection{The one-sided full 2-shift}
\label{subsec: full 2 shift}
In this subsection, we describe an idea and the detailed calculation of the generalized eigenvalues of the one-sided shift.
Here, we mainly deal with the one-sided full 2-shift, namely, $\Sigma_+ =\{0,1\}^\N$ case. We will treat a general case in subsequent sections.

First, we fix several notations.
Let $\mu_0$ be a measure on the set $\{0,1\}$ defined by $\mu_0(\{0\})=\mu_0(\{1\})=1/2$, and define a measure $\mu_+:= \mu_0^{\otimes \N}$ on $\Sigma_+$ as a product measure of $\mu_0$'s.
We note that $\mu_+$ is an invariant measure with respect to $S$, and thus $U_{S}$ is an isometric linear operator on $L^2(\Sigma_+,\mu_+)$.
For $x=0,1$ and $\omega=(\omega_1,\omega_2,\dots)\in\Sigma_+$, we denote by $x*\omega\in\Sigma_+$ the sequence $(x,\omega_1,\omega_2,\dots)\in\Sigma_+$.

We remark that the one-sided full 2-shift space $\Sigma_+$ is also regarded as a compact topological group.
Actually, $\Sigma_+$ is a compact group defined by the product of the groups $\Z/2\Z$'s. 
Then, the normalized Haar measure on this group is identical with the measure $\mu_+$ defined above.  
For $k\in\N$, we define a character on $\Sigma_+$, which is a continuous group homomorphism from $\Sigma_+$ to $\C$, by
\[\rho_k:\Sigma_+\longrightarrow \{1,-1\};\omega=(\omega_1,\omega_2,\dots)\mapsto (-1)^{\omega_k}. \]
For $n\in\Z_{\ge0} $, let $n=\sum_{i=0}^\infty s_i2^{i}$ be the 2-adic expansion. 
Then, the {\em $n$-th Walsh function $W_n$} is defined by 
\[W_n(\omega) := \prod_{k=0}^\infty\rho_{k+1}(\omega)^{s_k}.\]
The set of Walsh functions $\{W_n\}_{n=0}^\infty$ generates the whole characters of $\Sigma_+$. 
Thus, by the representation theory of compact topological groups, $\{W_n\}_{n=0}^\infty$ is an orthonormal basis of $L^2(\Sigma_+,\mu_+)$.

First, we give several basic properties of $U_S$ and $V_{S}$ on $L^2(\Sigma_+,\mu_+)$.  
\begin{proposition}
\label{prop: 193308242020}
\begin{enumerate}
    \item \label{PF op of one full shift}For $f\in L^2(\Sigma_+,\mu_+)$, we have 
    \begin{align}
        (V_{S}f)(\omega)=2^{-1}f(0*\omega)+2^{-1}f(1*\omega).
    \end{align}
    \item \label{PF op and Walsh function}For $n\in\Z_{\ge0}$, we have
    \begin{align}
    U_{S}W_n&=W_{2n},\\
    V_{S}W_n&=
    \begin{cases}
        W_{n/2} & \text{ if }n\in 2\Z,\\
        0 & \text{otherwise}.
    \end{cases}
    \end{align}
    \item \label{spc of PF op}The spectrum of $V_{S}$ is described as follows:
    \begin{align*}
        \text{point spectrum: }& \{z \in \C \,|\, |z|<1\}\cup \{1\},\\
        \text{continuous spectrum: }& \{z \in \C \,|\, |z|=1, z\neq 1\}.
    \end{align*}
    The eigenspace of the eigenvalue $z = 1$ is one dimensional space consisting of constant functions. Otherwise, the eigenspaces of the point spectrums $z$ are infinite dimensional.
\end{enumerate}
\end{proposition}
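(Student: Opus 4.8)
The plan is to verify the three parts sequentially, using the explicit action on the Walsh basis as the computational engine. For part (\ref{PF op of one full shift}), I would compute the adjoint directly from the definition $(V_S f, g)_{L^2} = (f, U_S g)_{L^2}$. Writing out the right-hand integral $\int_{\Sigma_+} f(\omega)\,\overline{g(\omega_2,\omega_3,\dots)}\,\dd\mu_+$ and using that $\mu_+$ is the product measure, I would split the integration over $\omega_1\in\{0,1\}$: since $g(\omega_2,\omega_3,\dots)$ does not depend on $\omega_1$ and $\mu_0(\{0\})=\mu_0(\{1\})=1/2$, the $\omega_1$-integral of $f$ produces exactly $\tfrac12 f(0*\omega') + \tfrac12 f(1*\omega')$ where $\omega'=(\omega_2,\omega_3,\dots)$. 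Matching this against $\int \overline{g}\,(V_S f)$ (after relabeling the dummy variable) gives the claimed formula. This step is routine bookkeeping with product measures.

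For part (\ref{PF op and Walsh function}), the identity $U_S W_n = W_{2n}$ should follow from the definition of $W_n$ via the 2-adic expansion: if $n=\sum_i s_i 2^i$ then $2n = \sum_i s_i 2^{i+1}$, so the exponent $s_k$ attached to $\rho_{k+1}$ in $W_n$ becomes the exponent attached to $\rho_{k+2}$ in $W_{2n}$; on the other hand $(U_S W_n)(\omega) = W_n(S\omega) = \prod_k \rho_{k+1}(S\omega)^{s_k}$ and $\rho_{k+1}(S\omega) = (-1)^{(S\omega)_{k+1}} = (-1)^{\omega_{k+2}} = \rho_{k+2}(\omega)$, which matches. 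The formula for $V_S W_n$ then follows because $\{W_n\}$ is orthonormal and $V_S = U_S^*$: we have $(V_S W_n, W_m) = (W_n, U_S W_m) = (W_n, W_{2m}) = \delta_{n,2m}$, so $V_S W_n$ is $W_{n/2}$ when $n$ is even and $0$ when $n$ is odd. One should double-check the edge case $n=0$ ($W_0\equiv 1$), where $U_S W_0 = W_0$ and $V_S W_0 = W_0$, consistent with the stated formulas.

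For part (\ref{spc of PF op}), I would argue as follows. Since $U_S$ is an isometry, $\|V_S\|\le 1$ and $\sigma(V_S)\subset\{|z|\le 1\}$. To exhibit eigenvalues in the open disk: for $|z|<1$ seek an eigenvector $\phi = \sum_{n\ge 0} c_n W_n$ with $V_S\phi = z\phi$; using $V_S W_n = W_{n/2}$ for even $n$ and $0$ otherwise, the eigenvalue equation becomes a recursion $c_{2n} = z\, c_n$ (and the odd coefficients are free up to the constraint), which produces, for each choice of the ``seed'' coefficients on odd indices, an $\ell^2$ solution precisely because $|z|<1$ makes $\sum |c_n|^2$ converge; this simultaneously shows the eigenspace is infinite-dimensional for each such $z$. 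For $z=1$, the recursion forces $c_n$ constant along the doubling orbits and the $\ell^2$ condition kills everything except $c_0$, giving the one-dimensional space of constants. Finally, for $|z|=1$, $z\neq 1$: such $z$ cannot be an eigenvalue (the same recursion forces a non-$\ell^2$ solution), and $\lambda - V_S$ fails to be boundedly invertible there; the cleanest route is to note $\lambda - V_S = U_S^{*}(\lambda U_S - I)$ up to the shift structure, or more directly to observe that the point spectrum already contains a dense subset of the closed disk, hence its closure is the whole closed disk, and combine this with the fact that $V_S$ has no residual spectrum (its adjoint $U_S$ has empty point spectrum away from $1$, so no residual spectrum for $V_S$ off the unit circle) to conclude the unit circle minus $\{1\}$ is continuous spectrum. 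I expect the main obstacle to be this last step — cleanly separating continuous from residual spectrum on the unit circle and justifying that the boundary circle is entirely non-resolvent — rather than the eigenvalue computation, which is essentially the recursion above.
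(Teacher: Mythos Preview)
Your arguments for parts (\ref{PF op of one full shift}), (\ref{PF op and Walsh function}), and the point-spectrum half of (\ref{spc of PF op}) are correct and essentially identical to the paper's: the paper also computes the adjoint by integrating out $\omega_1$, derives the Walsh relations by direct calculation, and obtains the recursion $c_{2m}=\lambda c_m$ from the eigenvalue equation to identify the point spectrum and the structure of the eigenspaces.

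Where you diverge is in showing that $\{|z|=1,\,z\neq 1\}$ is continuous spectrum. The paper does this constructively: for each such $z$ it exhibits the Weyl sequence $f_n=n^{-1/2}\sum_{k=0}^{n-1} z^k W_{2^k}$, checks $\|f_n\|=1$, and computes $(z-V_S)f_n=z^n W_{2^{n-1}}/\sqrt{n}\to 0$. Your route is instead topological/duality-based: the spectrum is closed and contains the open disk, hence contains the closed disk; and since $\lambda$ lies in the residual spectrum of $V_S$ only if $\overline{\lambda}$ is an eigenvalue of $V_S^{*}=U_S$, the fact that $U_S$ has no eigenvalues other than $1$ (which you should state and justify via the same coefficient recursion applied to $U_S$) forces the residual spectrum to be empty, leaving the unit circle minus $\{1\}$ as continuous spectrum. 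This is a perfectly valid alternative and arguably cleaner, though less explicit; the paper's Weyl sequence has the advantage of being self-contained and not invoking the general residual-spectrum/adjoint-eigenvalue correspondence. One small correction: your phrase ``no residual spectrum for $V_S$ off the unit circle'' should read ``away from $1$'' --- the conclusion you need is that the residual spectrum is empty everywhere, which follows once you note $1$ is already in the point spectrum.
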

\begin{proof}
Regarding (\ref{PF op of one full shift}), since $U_S$ is the adjoint of $V_S$,
\begin{align*}
    \int_{\Sigma_+} f(\omega) \overline{V_Sg(\omega)} \dd\mu_+(\omega)
    &= \int_{\Sigma_+} U_{S} f(\omega) \overline{g(\omega)} \dd\mu_+(\omega)\\
    &= \int_{\Sigma_+} f(\omega_2,\dots) \overline{g(\omega_1,\omega_2,\dots)}\dd\mu_+(\omega).
\end{align*}
Then, we have
\begin{align*}
    \int_{\Sigma_+} f(\omega)\overline{ V_Sg(\omega)} \dd\mu_+(\omega)
    &= \int_{\{0,1\}} \int_{\Sigma_+} f(\widetilde{\omega})\overline{g(x*\widetilde{\omega})} \dd\mu_+(\widetilde{\omega})\dd\mu_0(x)\\
    &=\int_{\Sigma_+}f(\omega)\overline{\left(2^{-1}g(0*\omega)+2^{-1}g(1*\omega)\right) }\dd\mu_+(\omega).
\end{align*}
As for (\ref{PF op and Walsh function}), it follows from a direct calculation with (\ref{PF op of one full shift}).
Now, we prove (\ref{spc of PF op}). First, we consider the point spectrum. Let $f\in L^2(\Sigma_+,\mu_+)$ be an eigenfunction of $V_{S}$ such that $V_{S}f=\lambda f$ for some $\lambda \in \C$. 
Since $\|V_{S}\|=1$, we may assume $|\lambda|\le1$.
Let $f=\sum_{n=0}^\infty c_nW_n$ be the orthonormal decomposition with the Walsh functions.
By substituting it into $V_Sf=\lambda f$, and comparison of coefficients for all even positive integers $n\in 2 \Z_{\ge0}$ with the aid of the statement (\ref{PF op and Walsh function}), we have 
\[c_n=
\lambda c_{n/2}.\]
Since $\sum_n|c_n|^2<\infty$, we conclude $\lambda=1$ and $c_n=0$ for all $n>0$, or $|\lambda|<1$.
The former case implies that the eigenspace of the eigenvalue 1 is 1-dimensional, and the eigenfunctions are constant functions.
Regarding the latter case, since $f$ is described as
\[f=\sum_{\substack{n\in\N \\ n\not\equiv0~{\rm mod}~2}}c_n\left(\sum_{r=0}^\infty \lambda^{r}W_{2^rn}\right),\]
for arbitrary constants $c_n$ ($n$: odd), where we define $0^0=1$ in the case of $\lambda=0$, the eigenspace of the eigenvalue $\lambda$ with $|\lambda|<1$ is an infinite dimensional space, which consists of eigenfunctions $\sum_{r\ge0} \lambda^{r+1}W_{2^rn}$ for odd integers $n$.
Let us consider the continuous spectrum of $V_{S}$.
Let $z\in\C\setminus\{1\}$ such that $|z|=1$. Note that the above argument implies that $z$ cannot be a point spectrum. For $n>0$, we define 
\[f_n:=\frac{1}{\sqrt{n}}\sum_{k=0}^{n-1} z^kW_{2^k}. \]
Then, we have $\| f_n\|=1$ and the statement (2) shows $(z-V_S)f_n=z^{n}W_{2^{n-1}}/\sqrt{n}$. 
Thus $(z-V_S)f_n\rightarrow 0$ as $n\rightarrow \infty$, namely, $z$ is a continuous spectrum of $V_S$.
\end{proof}

We define the function $h:\Sigma_+\rightarrow [0,1]$ by
\begin{align}
\label{eq: def of h}
   h(\omega)=\sum_{i=1}^\infty \omega_i2^{-i}.
\end{align}
We note that $h$ is a continuous and surjective map, 
and induces a homeomorphism outside the countable subset of periodic sequences, namely, $h$ is a homeomorphism from $\Sigma_+\setminus \{\omega=(\omega_i)_i : \omega_i=\omega_{i+r} \text{ for some $r$}\}$ onto $[0,1]\setminus \Z[2^{-1}]$, where $\Z[2^{-1}]=\{n2^{-m} \,|\, m,n\in\Z\}$. 
In particular,  the pull-back $h^*$ induces an isomorphism between the Hilbert spaces:
\begin{align}h^*: L^2([0,1],\dd x) \cong L^2(\Sigma_+,\mu_+); x\mapsto h.
\label{eq:isom of L^2}
\end{align}
Remark that this isomorphism gives compatibility between  Perron-Frobenius operators associated with the shift map and the 2-adic R\'enyi map. More precisely, let $T$ be the 2-adic R\'enyi map, which is a map on $[0,1]$ defined by
\begin{align}
    T(x)=
    \begin{cases}
    2x & \text{ if }x\in[0,1/2],\\
    2x-1 & \text{ if }x\in(1/2,1].
    \end{cases}
\end{align}
We have the following commutative diagram:
\[
\xymatrix{
L^2([0,1],\dd{x}) \ar[r]^{h^*}\ar[d]^{V_T} & L^2(\Sigma_+,\mu_+)\ar[d]^{V_S}\\
L^2([0,1],\dd{x}) \ar[r]^{h^*} & L^2(\Sigma_+,\mu_+), \\
}
\]
namely, 
\begin{align}
    h^*V_T=V_Sh^*.
\end{align}
As in the following proposition, the action of $V_S$ preserves the degree of polynomials of variable $h \in L^2(\Sigma_+, \mu_+)$:
\begin{proposition}
\label{prop: behavior of h via PF op}
We have
\begin{align*}
    h(0*\omega) &= 2^{-1} h(\omega), \\
    h(1*\omega) &= 2^{-1} + 2^{-1}h(\omega).
\end{align*}
In particular,
\[V_{S}[h^n] = 2^{-n}h^n + q(h), \]
where $q(h)\in \C[h]$ is a polynomial of variable $h$ of degree smaller than $n$.
\end{proposition}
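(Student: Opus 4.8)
The plan is to treat the two displayed identities for $h$ first, since they are purely a matter of unwinding the definition \eqref{eq: def of h}, and then to feed them into the formula for $V_S$ from Proposition~\ref{prop: 193308242020}(\ref{PF op of one full shift}).

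First I would fix $\omega=(\omega_1,\omega_2,\dots)\in\Sigma_+$ and a symbol $x\in\{0,1\}$, and write out $h(x*\omega)$. By definition, $x*\omega$ has first coordinate $x$ and $(i+1)$-st coordinate $\omega_i$ for $i\ge1$, so
\[
h(x*\omega)=\sum_{i=1}^\infty (x*\omega)_i\,2^{-i}=x\cdot 2^{-1}+\sum_{i=2}^\infty \omega_{i-1}2^{-i}
= x\cdot 2^{-1}+2^{-1}\sum_{j=1}^\infty \omega_j 2^{-j}= x\cdot 2^{-1}+2^{-1}h(\omega).
\]
Setting $x=0$ and $x=1$ gives the two claimed identities $h(0*\omega)=2^{-1}h(\omega)$ and $h(1*\omega)=2^{-1}+2^{-1}h(\omega)$. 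The only thing to be mildly careful about here is the reindexing of the series, which is legitimate because the sum converges absolutely.

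Next I would apply Proposition~\ref{prop: 193308242020}(\ref{PF op of one full shift}) with $f=h^n$:
\[
(V_S[h^n])(\omega)=2^{-1}h(0*\omega)^n+2^{-1}h(1*\omega)^n
=2^{-n-1}h(\omega)^n+2^{-n-1}\bigl(1+h(\omega)\bigr)^n .
\]
Expanding $(1+h)^n=h^n+\binom{n}{1}h^{n-1}+\cdots+1$ by the binomial theorem, the coefficient of $h^n$ on the right is $2^{-n-1}+2^{-n-1}=2^{-n}$, and all the remaining monomials $\binom{n}{k}h^{k}$ with $0\le k\le n-1$ collect into a polynomial $q(h)\in\C[h]$ of degree at most $n-1$. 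This yields $V_S[h^n]=2^{-n}h^n+q(h)$, completing the proof.

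There is essentially no obstacle here: the statement is a direct computation, and the only nontrivial input, the explicit action $(V_Sf)(\omega)=2^{-1}f(0*\omega)+2^{-1}f(1*\omega)$, has already been established in Proposition~\ref{prop: 193308242020}. The one point worth stating cleanly is that $h^n$, being a bounded measurable function on $\Sigma_+$, indeed lies in $L^2(\Sigma_+,\mu_+)$, so that applying $V_S$ to it is meaningful; this is immediate since $0\le h\le 1$.
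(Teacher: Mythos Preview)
Your proof is correct and follows exactly the approach the paper intends: the paper's own proof is the one-line ``It follows from the definition of $h$,'' and you have simply written out the computation in full, including the binomial expansion that isolates the $2^{-n}h^n$ term.
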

\begin{proof}
It follows from the definition of $h$.
\end{proof}

\paragraph{Calculation of the generalized eigenvalues.}
We compute the generalized eigenvalues for $V_{S}$.
First, we specify a test space $X$ to determine a rigged Hilbert space.
Let 
\begin{align}
    X_n:=\sum_{i=0}^n \C h^i \label{eq: def of X_n}
\end{align}
be a $\C$-linear subspace of dimension $n+1$ in  $L^2(\Sigma_+,\mu_+)$.
We equip $X_n$ with the usual topology of $\C^{n+1}$.
Then, we define the space
\begin{align}
    X:=\lim_{\underset{n}{\longrightarrow}}X_n =\C[h], \label{eq: def of X}
\end{align}
equipped with the inductive limit topology.
\begin{proposition}
The space $X$ is a dense subspace of $L^2(\Sigma_+,\mu_+)$ and has a stronger topology than $L^2(\Sigma_+,\mu_+)$.
\end{proposition}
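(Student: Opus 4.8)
The plan is to verify the two assertions of the proposition separately: density of $X=\C[h]$ in $L^2(\Sigma_+,\mu_+)$, and the fact that the inductive limit topology on $X$ is finer than the subspace topology inherited from $L^2$. For density, I would argue as follows. By the isomorphism \eqref{eq:isom of L^2}, the pull-back $h^*$ identifies $L^2(\Sigma_+,\mu_+)$ with $L^2([0,1],\dd x)$ and sends the coordinate function $x$ to $h$; hence it sends the subalgebra $\C[x]$ of polynomials on $[0,1]$ onto $\C[h]=X$. Since the polynomials are dense in $L^2([0,1],\dd x)$ — this follows, for instance, from the Stone–Weierstrass theorem, which gives uniform density of polynomials in $C[0,1]$, together with the density of $C[0,1]$ in $L^2([0,1],\dd x)$ and the domination of the $L^2$-norm by the sup-norm on the bounded interval — we conclude that $X$ is dense in $L^2(\Sigma_+,\mu_+)$.

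For the comparison of topologies, the key point is to show that the inclusion map $\iota\colon X\hookrightarrow L^2(\Sigma_+,\mu_+)$ is continuous when $X$ carries the inductive limit topology $\varinjlim_n X_n$. By the universal property of the inductive limit, $\iota$ is continuous if and only if each restriction $\iota|_{X_n}\colon X_n\to L^2(\Sigma_+,\mu_+)$ is continuous. But $X_n=\sum_{i=0}^n\C h^i$ is finite dimensional, equipped with the standard topology of $\C^{n+1}$, and any linear map from a finite dimensional normed space into a topological vector space is automatically continuous; concretely, on $X_n$ one has the estimate $\|\sum_{i=0}^n c_i h^i\|_{L^2}\le\sum_{i=0}^n|c_i|\,\|h^i\|_{L^2}\le\big(\sum_i|c_i|\big)$ since $0\le h\le1$ forces $\|h^i\|_{L^2}\le1$. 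Hence each $\iota|_{X_n}$ is continuous, so $\iota$ is continuous, which is exactly the statement that the topology of $X$ is stronger than that of $L^2(\Sigma_+,\mu_+)$.

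I expect the main (though still modest) subtlety to lie not in the estimates but in being careful about what "stronger topology" means for a locally convex inductive limit and in confirming that $X_n\hookrightarrow X_{n+1}$ is a topological embedding so that the inductive limit is well behaved (it is, since these are inclusions of finite dimensional spaces with their unique Hausdorff vector topology, and $X_n$ is closed in $X_{n+1}$). One should also note in passing that $X$ is Hausdorff — this holds because it injects into the Hausdorff space $L^2(\Sigma_+,\mu_+)$, or alternatively because it is a strict inductive limit of Hausdorff spaces — which together with the above makes $X\subset L^2(\Sigma_+,\mu_+)\subset X'$ a genuine Gelfand triplet as required for the generalized spectral theory of Section 2. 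No genuinely hard step arises; the content is organizational, assembling density of polynomials and the universal property of inductive limits.
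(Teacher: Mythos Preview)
Your proposal is correct and follows essentially the same approach as the paper: density via the isomorphism $h^*$ with $L^2([0,1],\dd x)$ together with Stone--Weierstrass, and continuity of the inclusion via the universal property of the inductive limit applied to the finite-dimensional pieces $X_n$. The paper's proof is just a terser version of yours, omitting the explicit estimate and the remarks on Hausdorffness and strictness of the limit.
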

\begin{proof}
Via the isomorphism (\ref{eq:isom of L^2}), the space $X$ corresponds to the space of polynomials in $L^2([0,1],\dd x)$, which is a dense subspace due to the Stone-Weierstrass theorem.
Since the inclusion $X_n \hookrightarrow L^2(\Sigma_+, \mu_+)$ is continuous for any $n$, thus the inclusion $X \hookrightarrow L^2(\Sigma_+, \mu_+)$ is also continuous according to the definition of the topology of the inductive limit, which is the strongest topology such that any $X_n \hookrightarrow X$ is continuous.
In particular, $X$ has a stronger topology than $L^2(\Sigma_+, \mu_+)$.
\end{proof}
We note that $X$ is a Montel space.  Hence, the rigged Hilbert space $X\subset L^2(\Sigma_+,\mu_+)\subset X'$ is well-defined. 
Then, $X$ and $X_n$ satisfy the following proposition:
\begin{proposition}
\label{prop:2020-08-18-1323}
For $n\in\Z_{\ge0}$, we have
\[V_{S}X_n\subset X_n.\]
Moreover, the representation matrix of $V_{S}|_{X_n}$ associated to the basis $1,h,\dots,h^n$ is an upper triangular matrix whose diagonal components are $1,2^{-1},\dots,2^{-n}$.
In particular, $V_{S}$ induces a continuous linear operator on $X$.
\end{proposition}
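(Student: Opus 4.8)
The plan is to leverage Proposition~\ref{prop: behavior of h via PF op} directly, since it already contains the essential computation. First I would recall that by that proposition, for each $k\in\Z_{\ge0}$ we have $V_S[h^k] = 2^{-k}h^k + q_k(h)$ where $\deg q_k < k$. This immediately shows that $V_S$ maps the span of $1,h,\dots,h^n$ into itself, i.e.\ $V_S X_n \subset X_n$, and that the matrix of $V_S|_{X_n}$ with respect to the ordered basis $1,h,\dots,h^n$ is upper triangular, because the image of the $k$-th basis vector $h^k$ is a linear combination of $h^0,\dots,h^k$ only. Reading off the coefficient of $h^k$ in $V_S[h^k]$, which is $2^{-k}$ by the proposition, gives the diagonal entries $1, 2^{-1}, \dots, 2^{-n}$.

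Next I would address the continuity statement for $V_S$ on $X$. Since $X_n$ carries the topology of $\C^{n+1}$ and $V_S|_{X_n}$ is a linear map between finite-dimensional normed spaces, it is automatically continuous on each $X_n$. Because $X = \varinjlim X_n$ carries the inductive limit topology and the restrictions $V_S|_{X_n}$ are compatible (the inclusions $X_n \hookrightarrow X_{n+1}$ are respected, as $V_S X_n \subset X_n \subset X_{n+1}$), the universal property of the inductive limit yields that $V_S$ induces a continuous linear operator $X \to X$. Concretely, for any locally convex space $Y$ and linear map $F : X \to Y$, $F$ is continuous iff each $F|_{X_n}$ is; applying this with $Y = X$ and $F = V_S$ finishes the argument.

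I do not anticipate a genuine obstacle here; the proposition is essentially a corollary of Proposition~\ref{prop: behavior of h via PF op}. The only point requiring a little care is the inductive limit continuity: one should note that the maps $V_S|_{X_n} : X_n \to X$ (landing in $X_n \subset X$) are continuous, so that the induced map on the colimit is continuous; this is where quasi-completeness or the Montel property of $X$ is not needed, only the definition of the inductive limit topology. If one wanted to be fully explicit about the upper triangularity, one could also prove the formula $V_S[h^k] = 2^{-k} h^k + (\text{lower order})$ by induction on $k$ using Proposition~\ref{prop: behavior of h via PF op} together with $(V_S f)(\omega) = 2^{-1} f(0*\omega) + 2^{-1} f(1*\omega)$ from Proposition~\ref{prop: 193308242020}, expanding $\bigl(2^{-1} h(\omega)\bigr)^k$ and $\bigl(2^{-1} + 2^{-1} h(\omega)\bigr)^k$ via the binomial theorem, but this is the routine calculation already subsumed in the earlier proposition.
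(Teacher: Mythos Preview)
Your proof is correct and takes exactly the same approach as the paper, which simply writes ``It follows from Proposition~\ref{prop: behavior of h via PF op}.'' You have merely spelled out in detail the upper-triangularity and the inductive-limit continuity argument that the paper leaves implicit.
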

\begin{proof}
It follows from Proposition \ref{prop: behavior of h via PF op}.
\end{proof}
\begin{corollary}
\label{cor: 202008241553}
For each $n\in\Z_{\ge0}$, the eigenspace of $V_S|_X$ in $X$ corresponding to the eigenvalue $2^{-n}$ is $1$-dimensional.
\end{corollary}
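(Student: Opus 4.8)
The plan is to reduce the statement to an elementary linear-algebra fact about the finite matrices $V_{S}|_{X_n}$ provided by Proposition \ref{prop:2020-08-18-1323}, and then transfer it to $X=\C[h]$ using that, as an underlying set, the inductive limit is simply the union $\bigcup_{m\ge 0}X_m$.

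First I would record the elementary observation that an upper triangular matrix whose diagonal entries are pairwise distinct is diagonalizable, and each of its eigenvalues has a one-dimensional eigenspace. By Proposition \ref{prop:2020-08-18-1323}, the matrix of $V_{S}|_{X_n}$ with respect to the basis $1,h,\dots,h^n$ is upper triangular with diagonal $1,2^{-1},\dots,2^{-n}$, and these $n+1$ numbers are pairwise distinct. Hence for each $k$ with $0\le k\le n$ there is, up to a scalar, a unique nonzero $f\in X_n$ with $V_{S}f=2^{-k}f$; concretely one may take $f$ to be the unique monic polynomial in $h$ of degree exactly $k$ satisfying this relation. In particular the $2^{-k}$-eigenspace of $V_{S}|_{X_n}$ inside $X_n$ is one-dimensional and nonzero.

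Next, fix $n\in\Z_{\ge0}$ and let $f\in X$ be arbitrary with $V_{S}f=2^{-n}f$. Since $X=\bigcup_{m\ge 0}X_m$, we have $f\in X_m$ for some $m\ge n$, and then $f$ lies in the $2^{-n}$-eigenspace of $V_{S}|_{X_m}$, which is one-dimensional by the previous paragraph (note that $2^{-n}$ is among the diagonal entries of $V_{S}|_{X_m}$ because $n\le m$). Conversely a generator of that eigenspace is a nonzero element of $X$. Therefore the eigenspace of $V_{S}|_{X}$ for the eigenvalue $2^{-n}$ is nonzero and contained in a one-dimensional space, hence is exactly one-dimensional.

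I do not anticipate any genuine obstacle. The only point requiring a little care is the compatibility of the eigenspaces along the directed system: that passing from $X_m$ to $X_{m'}$ with $m\le m'$ does not enlarge the $2^{-n}$-eigenspace. This is immediate, however, since each such eigenspace is already one-dimensional and the inclusions $X_m\hookrightarrow X_{m'}$ intertwine $V_{S}$, so the union of these eigenspaces over all $m\ge n$ coincides with any single one of them.
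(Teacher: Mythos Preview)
Your proof is correct and follows essentially the same approach as the paper: both reduce to the finite-dimensional pieces $X_n$ via Proposition~\ref{prop:2020-08-18-1323} and use that an upper triangular matrix with pairwise distinct diagonal entries has one-dimensional eigenspaces. Your version is in fact slightly more careful, since you make explicit the passage from $X_n$ to $X=\bigcup_m X_m$ (a step the paper's proof leaves implicit).
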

\begin{proof}
For any $n\in\Z_{\ge0}$, by Proposition \ref{prop:2020-08-18-1323}, there exist eigenvectors for $V_S|_{X_n}$ corresponding to the eigenvalues $1,\dots,2^{-n}$ in $X_n$. 
Since eigenvectors of distinct eigenvalues are linearly independent and the dimension of $X_n$ is $n+1$, we conclude that these eigenvectors constitute a basis of $X_n$, in particular, the eigenspace of $2^{-n}$ is $1$-dimensional.
\end{proof}
We denote by $\Phi_n\in X_n$ the unique eigenvector of $V_S|_X$ corresponding to $2^{-n}$ that is a monic polynomial of variable $h$ of degree $n$.
We define the dual basis $\{\Phi_n'\}_{n=0}^\infty \subset X'$ defined as continuous linear functionals on $X$ such that $\langle \Phi'_i | \Phi_j\rangle=\delta_{i,j}$.
Since $U_S^* = V_S$ is continuous on $X$ due to Prop.\ref{prop:2020-08-18-1323},
the dual operator $U_S^\times : X'\to X'$ of the Koopman operator is defined and continuous on $X'$ (Section 2).

\begin{theorem}
\label{thm: generalized spectrum for the one-sided full 2-shift}\,
\\
(1) The resolvent $(\lambda -V_S)^{-1}$ has an analytic continuation
to the inside of the unit circle as an operator from $X$ into $X'$.
The generalized eigenvalues with respect to the Gelfand triplet $X \subset L^2(\Sigma_+, \mu_+) \subset X'$ are $2^{-k},\,\, k=0,1,2,\cdots $ and the associated generalized eigenvectors are $\Phi_k, \,\,k=0,1,2,\cdots $.
\\
(2) $V_{S}$ and $U_S^\times$ have spectral decompositions of the form
\begin{eqnarray}
V_Sf &=& \sum^\infty_{i=0} 2^{-i} \overline{\langle \Phi_i' \,|\, f \rangle} \Phi_i, \quad f\in X, \label{thm3-6-1}\\
U_S^\times \mu &=& \sum^\infty_{i=0} 2^{-i} \langle \mu \,|\, \Phi_i \rangle \Phi_i', \quad \mu \in X'. \label{thm3-6-2}
\end{eqnarray}
\end{theorem}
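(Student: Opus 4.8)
The plan is to exploit the fact that, by Proposition~\ref{prop:2020-08-18-1323}, on each finite-dimensional space $X_n$ the operator $V_S$ is upper triangular with distinct diagonal entries $1, 2^{-1}, \dots, 2^{-n}$, so $V_S|_{X_n}$ is diagonalizable with one-dimensional eigenspaces (Corollary~\ref{cor: 202008241553}). The eigenvectors $\Phi_0, \dots, \Phi_n$ form a basis of $X_n$, and since $V_S|_{X_{n+1}}$ restricts to $V_S|_{X_n}$, the monic eigenvector $\Phi_k$ is independent of the ambient $X_n$; thus $\{\Phi_k\}_{k=0}^\infty$ is a basis of $X = \C[h]$ in the sense that every element of $X$ is a finite linear combination. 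The dual functionals $\Phi_k' \in X'$ are well-defined and continuous (each $\Phi_k'$ restricted to $X_n$ is just a coordinate functional on a finite-dimensional space). For $f \in X$, writing $f = \sum_{i} \overline{\langle \Phi_i' \mid f\rangle}\,\Phi_i$ (a finite sum, using the conjugate-linearity convention for $\langle\cdot\mid\cdot\rangle$) and applying $V_S$ term-by-term gives \eqref{thm3-6-1} immediately. This also shows $V_S$ has no spectrum on $X$ other than $\{2^{-k}\}$.

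Next I would compute the resolvent on $X$: for $\lambda \notin \{2^{-k}\}_{k\ge 0}$ and $f = \sum_i \overline{\langle \Phi_i' \mid f\rangle}\Phi_i \in X_n$, we get $(\lambda - V_S)^{-1} f = \sum_i \frac{1}{\lambda - 2^{-i}} \overline{\langle \Phi_i' \mid f\rangle} \Phi_i$, a finite sum, so this is an explicit element of $X_n \subset X \subset X'$. This formula is manifestly meromorphic in $\lambda$ over all of $\C$ (not just outside the unit circle), with simple poles exactly at $\lambda = 2^{-k}$. Hence, viewed as an $X'$-valued (indeed $X$-valued) function of $\lambda$, the resolvent defined for $|\lambda| > 1$ on $\mathcal{H}$ continues analytically to $\C \setminus \{2^{-k}\}$, establishing part (1)'s first assertion, with the generalized spectrum being exactly $\{2^{-k}\}$. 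To see that each $2^{-k}$ is a generalized eigenvalue with eigenvector $\Phi_k$, I would either invoke \cite{Chi2} Theorem~3.5 after identifying the residue of the resolvent at $\lambda = 2^{-k}$ as a rank-one operator $f \mapsto -\overline{\langle \Phi_k' \mid f\rangle}\Phi_k$ (so the Riesz projection $\Pi_{2^{-k}}$ has image $\C\Phi_k$), or directly check $V_S^\times \Phi_k = 2^{-k}\Phi_k$ in $X'$. The latter is short: $V_S = U_S^*$ is continuous on $X$, so its dual $V_S^\times$ on $X'$ is defined, extends $V_S$, and since $\Phi_k \in X$ we have $V_S^\times \Phi_k = V_S \Phi_k = 2^{-k}\Phi_k$.

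For \eqref{thm3-6-2}, I would dualize \eqref{thm3-6-1}. The Koopman operator $U_S$ is the Hilbert adjoint of $V_S$, and $U_S^*= V_S$ is continuous on $X$, so $U_S^\times : X' \to X'$ is the dual of $V_S|_X$, characterized by $\langle U_S^\times \mu \mid f\rangle = \langle \mu \mid V_S f\rangle$ for $f \in X$, $\mu \in X'$. Plugging in the spectral decomposition \eqref{thm3-6-1} of $V_S f$ and using conjugate-linearity, $\langle \mu \mid V_S f\rangle = \sum_i 2^{-i}\langle \mu \mid \overline{\langle \Phi_i'\mid f\rangle}\Phi_i\rangle = \sum_i 2^{-i}\langle \Phi_i'\mid f\rangle\,\langle \mu \mid \Phi_i\rangle$, while the claimed right-hand side of \eqref{thm3-6-2} pairs against $f$ to give $\sum_i 2^{-i}\langle \mu\mid \Phi_i\rangle\langle\Phi_i'\mid f\rangle$; these agree, and since this holds for all $f \in X$, \eqref{thm3-6-2} follows. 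Convergence in \eqref{thm3-6-1} is trivial (finite sum for each $f\in X$), and in \eqref{thm3-6-2} it is convergence in the weak dual topology of $X'$, which is exactly what the pairing identity above certifies.

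The main obstacle, such as it is, is not any single hard estimate but rather the bookkeeping of two conventions simultaneously: the conjugate-linear pairing $\langle\cdot\mid\cdot\rangle$ (which introduces complex conjugates on the $\langle\Phi_i'\mid f\rangle$ coefficients in \eqref{thm3-6-1} but not on the eigenvalues $2^{-i}$, since these are real), and the interplay between the Hilbert adjoint $U_S = V_S^*$ and the topological dual $V_S^\times = U_S^\times$. One must also be slightly careful that the ``analytic continuation'' claim is with respect to the weak dual topology on $\mathrm{Hom}(X, X')$ and verify that the $X$-valued meromorphic formula above genuinely agrees with the $\mathcal{H}$-resolvent on $|\lambda|>1$ — this is where one uses that $X$ is dense in $\mathcal{H}$ and that both operators satisfy $(\lambda - V_S)(\lambda - V_S)^{-1} = \mathrm{id}$ on $X$.
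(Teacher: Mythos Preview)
Your proposal is correct and follows essentially the same approach as the paper. The only cosmetic difference is in how the analytic continuation is connected to the Hilbert-space resolvent: the paper begins with the Neumann series $\left((\lambda-V_S)^{-1}f,g\right)=\lambda^{-1}\sum_{r\ge0}\lambda^{-r}(V_S^r f,g)$ valid for $|\lambda|>1$, substitutes $f=\sum_i c_i\Phi_i$, and sums the geometric series to arrive at the same closed form $\sum_i c_i(\lambda-2^{-i})^{-1}\Phi_i$, whereas you write down that formula directly as the inverse of $(\lambda-V_S)$ on $X$ and defer the agreement with the $L^2$-resolvent to a uniqueness remark; both part~(2) arguments are identical.
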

\begin{proof}
(1) For $f,g\in L^2(\Sigma_+,\mu_+)$, we have
\begin{align*}
    \left((\lambda-V_S)^{-1}f ,g \right)
      =\frac{1}{\lambda}\left( (1-V_S/\lambda)^{-1}f , g\right).
\end{align*}
When $|\lambda|>1$, $\|V_S/\lambda\|<1$, and the Neumann series is applied to yield
\begin{align*}
    \left((\lambda-V_S)^{-1}f ,g \right)&=\frac{1}{\lambda} \sum_{r=0}^\infty \frac{1}{\lambda^{r}} \left( V_S^r f , g \right).
\end{align*}
Let us construct an analytic continuation as an operator from $X$ to $X'$.  For this purpose, suppose $f,g\in X$. Let $f=\sum_{i=0}^m c_i \Phi_i$ 
(note that the number $m$ depends on $f$).
Then, $V_S \Phi_i = \Phi_i/2^i$ shows
\begin{align*}
    \langle (\lambda-V_S)^{-1}f \, | \, g \rangle
    = \frac{1}{\lambda}\sum_{r=0}^\infty \sum_{i=0}^\infty \frac{c_i}{(2^{i}\lambda)^{r}}\langle \Phi_i | g \rangle
    = \sum_{i=0}^\infty \frac{c_i \langle \Phi_i | g\rangle}{\lambda-2^{-i}}.
\end{align*}
Since this equality holds for any $g\in X$, we obtain
\begin{eqnarray}
(\lambda-V_S)^{-1}f = \sum^\infty_{i=0} \frac{c_i}{\lambda-2^{-i}} \Phi_i.
\end{eqnarray}
This gives the analytic continuation of $(\lambda-V_S)^{-1}f$, called the generalized resolvent,  from the region $|\lambda|>1$ to $|\lambda|<1$ as an $X'$-valued function for each $f\in X$.
By the definition (Section 2), its pole $2^{-i}$ is a generalized eigenvalue and 
the image of the Riesz projection $c_i\Phi_i$ is a generalized eigenvector
for $i=0,1,2,\cdots$. 

(2) For the spectral decomposition of $V_S$, we again put $f=\sum_{i=0}^m c_i\Phi_i\in X$.
By applying $\Phi_n'$ in both sides, we immediately find $\overline{c_i} = \langle \Phi_i' \,|\, f \rangle$.
This and $V_S \Phi_i = \Phi_i/2^i$ proves (\ref{thm3-6-1}).
Further, (\ref{thm3-6-1}) yields
\begin{eqnarray*}
\langle \mu \,|\, V_Sf \rangle = \langle \mu \,|\, \sum^\infty_{i=0} 2^{-i} c_i \Phi_i \rangle 
= \sum^\infty_{i=0} 2^{-i} \langle \mu \,|\, \Phi_i \rangle \cdot \langle \Phi_i' \,|\, f \rangle.
\end{eqnarray*}
for any $\mu \in X'$.
On the other hand, we have 
\begin{eqnarray*}
\langle \mu \,|\, V_Sf \rangle = \langle \mu \,|\, U^*_Sf \rangle = \langle U_S^\times \mu \,|\, f \rangle.
\end{eqnarray*}
Therefore,
\begin{eqnarray*}
\langle U_S^\times \mu \,|\, f \rangle 
  = \Bigl\langle \sum^\infty_{i=0} 2^{-i} \langle \mu \,|\, \Phi_i \rangle \Phi_i' \,\Bigl|\, f \Bigr\rangle
\end{eqnarray*}
holds for any $f\in X$, which proves (\ref{thm3-6-2}).
\end{proof}

As a corollary of Theorem \ref{thm: generalized spectrum for the one-sided full 2-shift}, the iteration $V_S^k f$ converges to the average of $f$:
\begin{corollary}
The notation is as in Theorem \ref{thm: generalized spectrum for the one-sided full 2-shift}.
For any $f \in X$, we have 
\[V_S^kf \to \int_{\Sigma_+} f \dd\mu_+ \cdot \Phi_0\]
as $k \to \infty$ (recall $\Phi_0 \equiv 1$).
\end{corollary}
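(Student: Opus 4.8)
The plan is to read off the convergence directly from the spectral decomposition~\eqref{thm3-6-1}. Fix $f \in X = \C[h]$, so $f = \sum_{i=0}^m c_i \Phi_i$ for some finite $m$ depending on $f$, where $\Phi_i$ are the monic eigenvectors with $V_S \Phi_i = 2^{-i}\Phi_i$ from Corollary~\ref{cor: 202008241553}. Applying $V_S$ repeatedly, $V_S^k f = \sum_{i=0}^m c_i\, 2^{-ik}\,\Phi_i$. Since this is a finite sum, there is no subtlety in taking the limit: every term with $i \ge 1$ has coefficient $2^{-ik} \to 0$ as $k \to \infty$, so $V_S^k f \to c_0 \Phi_0$. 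Thus the only thing left is to identify $c_0$.

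To pin down $c_0$, I would use $c_0 = \overline{\langle \Phi_0' \,|\, f\rangle}$ (established in the proof of Theorem~\ref{thm: generalized spectrum for the one-sided full 2-shift}), together with the fact that $\Phi_0 \equiv 1$ is the constant function. Since $\Phi_0, \Phi_1, \dots, \Phi_m$ is a basis of $X_m$ adapted to the eigenvalues and $\Phi_i$ for $i \ge 1$ is a polynomial of degree $i \ge 1$ in $h$, each such $\Phi_i$ lies in the orthogonal complement of the constants? That is not automatic, so instead I would argue as follows: $\int_{\Sigma_+} V_S^k f \, \dd\mu_+ = \int_{\Sigma_+} f \, \dd\mu_+$ for all $k$, because $V_S$ preserves the integral (indeed $\int_{\Sigma_+} V_S g \,\dd\mu_+ = \int_{\Sigma_+} g \,\dd\mu_+$ follows from Proposition~\ref{prop: 193308242020}\eqref{PF op of one full shift} and the $S$-invariance of $\mu_+$, or simply from $(V_S g, 1) = (g, U_S 1) = (g,1)$). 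Passing to the limit in the convergent finite sum, $\int_{\Sigma_+} f\,\dd\mu_+ = \int_{\Sigma_+}\big(\lim_k V_S^k f\big)\dd\mu_+ = \int_{\Sigma_+} c_0 \Phi_0\, \dd\mu_+ = c_0$, since $\Phi_0 \equiv 1$ and $\mu_+$ is a probability measure. Hence $c_0 = \int_{\Sigma_+} f\,\dd\mu_+$ and $V_S^k f \to \big(\int_{\Sigma_+} f\,\dd\mu_+\big)\Phi_0$, as claimed.

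One small point to be careful about: the convergence $V_S^k f \to c_0\Phi_0$ should be stated in the topology of $X_m$ (equivalently $X$), which is fine since $V_S^k f$ stays in the finite-dimensional space $X_m$ for all $k$ and the convergence is coordinatewise in the basis $\Phi_0,\dots,\Phi_m$; this is then also convergence in $L^2(\Sigma_+,\mu_+)$ because $X$ carries a stronger topology. There is no real obstacle here — the whole statement is an immediate corollary of the finiteness of the spectral expansion of elements of $X$ and the geometric decay $2^{-ik}$; the only mild care needed is the identification of the leading coefficient $c_0$ with the mean, which the integral-invariance argument handles cleanly.
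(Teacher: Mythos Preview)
Your proof is correct and follows essentially the same route as the paper: both expand $f=\sum_{i=0}^m c_i\Phi_i$, use $V_S^k\Phi_i=2^{-ik}\Phi_i$ to get $V_S^k f\to c_0\Phi_0$, and then identify $c_0$ via the invariance $(V_S g,1)=(g,U_S1)=(g,1)$. The only cosmetic difference is that the paper applies this invariance to each $\Phi_n$ separately to show $\int\Phi_n\,\dd\mu_+=0$ for $n\ge1$, whereas you apply it to $V_S^k f$ and pass to the limit; both arguments are equivalent.
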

\begin{proof}
Let $f = \sum_{i=0}^m c_i \Phi_i$.
Then, we have
\[V_S^k f = \sum_{i=0}^m c_i 2^{-ik}\Phi_i.\]
Thus, $V_S^k f \to c_0$ as $k \to \infty$.
It suffices to show that 
\begin{align*}
    \int_{\Sigma_+} f \dd\mu_+ = c_0.
\end{align*}
It follows from the following formula: 
\begin{align}
    \int_{\Sigma_+} \Phi_n \dd\mu_+ = 0 \text{ for any $n \ge 1$.} \label{average of Phi}
\end{align}
In fact, since $\Phi_0 = 1$, we have
\begin{align*}
    \int_{\Sigma_+} \Phi_n \dd\mu_+ 
    &= \int_{\Sigma_+} \Phi_n \Phi_0\dd\mu_+ \\
    &= \int_{\Sigma_+} \Phi_n U_S[\Phi_0]\dd\mu_+ \\
    &= \int_{\Sigma_+} V_S[\Phi_n] \Phi_0\dd\mu_+ \\
    &= 2^{-n}\int_{\Sigma_+} \Phi_n \Phi_0\dd\mu_+ \\
    &= 2^{-n}\int_{\Sigma_+} \Phi_n \dd \mu_+.
\end{align*}
Thus, we have \eqref{average of Phi}.
\end{proof}

\begin{remark}
Antoniou and Tasaki \cite{AT92} computed the generalized eigenvalues for the R\'enyi map.
They prove that the generalized eigenvalues of Perron-Frobenius operator associated to 2-adic R\'enyi map are $1,2^{-1},2^{-2},\dots$, and the corresponding eigenvectors are {\em Bernoulli polynomials} $\{B_n(x)\}_{n=0}^\infty$, which are defined as coefficients of the following Taylor expansion:
\[\frac{te^{xt}}{e^t-1}=\sum_{n=0}^\infty \frac{B_n(x)}{n!}t^n.\]
Theorem \ref{thm: generalized spectrum for the one-sided full 2-shift} is compatible with their result through the isomorphism $h^*$ of (\ref{eq:isom of L^2}).  Namely, the Bernoulli polynomial $h^*B_n$ is identical with $\Phi_n$.
In fact, the Bernoulli polynomial satisfies (a special case of) Raabe's multiplication theorem\cite{Raa1851}:
\[\frac{1}{2}B_n(x) + \frac{1}{2}B_n(x+1/2) = 2^{-n}B(2x).\]
We can prove that the left hand side coincides with $(V_TB_n)(2x)$, and thus, Raabe's multiplication theorem means that the Bernoulli polynomial $B_n$ is an eigenvector of $V_T$ corresponding to $2^{-n}$. 
Since the test space $X$ in $L^2(\Sigma_+,\mu_+)$ corresponds to the space of polynomials in $L^2([0,1],\dd{x})$ via $h^*$, 
by the same argument of the proof of Proposition \ref{prop:2020-08-18-1323}, $B_n$ is characterized as a unique eigenvector with respect to $2^{-n}$ in the space of polynomials, and thus $h^*B_n=\Phi_n$.
\end{remark}
\subsection{The one-sided full $\beta$-shift}
\label{subsec: full beta shift}
In this subsection, we consider $\Sigma_+ := [\beta]^{\mathbb{N}}$ and the one-sided full $\beta$-shift $S$ on $\Sigma_+$. The generalized eigenvalues of the Perron-Frobenius operator $V_S$ on $L^2(\Sigma_+, \mu_+)$ with a general Bernoulli measure $\mu_+$ will be obtained. The outline of the proof is carried out in the same way as the case of the one-sided full $2$-shift described in the previous subsection, thus we here only provide corresponding statements without proofs.

First, we specify some notations. 
Let $\beta \ge 2$ be a positive integer.
Let $p_0,\dots, p_{\beta-1}$ be positive numbers such that $p_0+\dots+p_{\beta-1}=1$, and let $\mu_0$ be a measure on $[\beta]=\{0,1,\dots,\beta-1\}$ defined by $\mu_0(\{i\})=p_i$.
Then, the product measure on $\Sigma_+$ is denoted by $\mu_+ := \mu_0^{\otimes \N}$.  

Fix an orthonormal system $\psi_0=1, \psi_1,\dots,\psi_{\beta-1}$ of $L^2([\beta],\mu_0)$.
For example, we may take $\psi_s(\omega) = {\rm e}^{2\pi i s\omega / \beta}$ if $p_0 = \cdots = p_{\beta-1} = 1/\beta$.
For $n \in \Z_{\ge 0}$, we define
\begin{align}
    W_n(\omega_1, \omega_2, \dots) := \prod_{k=1}^\infty \psi_{s_k}(\omega_k),
\end{align}
where $n=\sum_{k=1}^\infty s_k\beta^{k-1}$ is the $\beta$-adic expansion of $n$. 
The functions $W_n$'s are generalizations of Walsh functions.

Then, we have the following proposition:
\begin{proposition}
\begin{enumerate}
    \item \label{PF op of one full shift 2}For $f\in L^2(\Sigma_+,\mu_+)$, we have 
    \begin{align}
        (V_{S}f)(\omega)=\sum_{i=0}^{\beta-1}p_if(i*\omega).
    \end{align}
    \item \label{PF op and Walsh function 2}For $n\in\Z_{\ge0}$, we have
    \begin{align}
    U_{S}W_n&=W_{\beta n},\\
    V_{S}W_n&=
    \begin{cases}
        W_{n/\beta} & \text{ if }n\in \beta\Z,\\
        0 & \text{otherwise}.
    \end{cases}
    \end{align}
    \item \label{spc of PF op 2}The spectrum of $V_{S}$ is described as follows:
    \begin{align*}
        \text{point spectrum: }& \{z \in \C \,|\, |z|<1\}\cup \{1\},\\
        \text{continuous spectrum: }& \{z \in \C \,|\, |z|=1, z\neq 1\}.
    \end{align*}
    The eigenspace of the eigenvalue $z = 1$ is one dimensional space consisting of constant functions. Otherwise, the eigenspaces of the point spectrums $z$ are infinite dimensional.
\end{enumerate}
\end{proposition}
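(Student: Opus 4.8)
The plan is to mirror, almost verbatim, the proof of Proposition~\ref{prop: 193308242020}, replacing the pair of branches $\{0,1\}$ by the $\beta$ branches $\{0,\dots,\beta-1\}$ and the two-element measure $\mu_0$ by the weighted measure $\mu_0(\{i\})=p_i$. For part~(\ref{PF op of one full shift 2}), I would start from the defining identity $(V_Sf,g)_{L^2}=(f,U_Sg)_{L^2}$, disintegrate the product measure $\mu_+=\mu_0\otimes\mu_0^{\otimes\N}$ over the first coordinate, and compute
\begin{align*}
\int_{\Sigma_+}f(\omega)\overline{V_Sg(\omega)}\,\dd\mu_+(\omega)
=\int_{[\beta]}\int_{\Sigma_+}f(\widetilde\omega)\overline{g(i*\widetilde\omega)}\,\dd\mu_+(\widetilde\omega)\,\dd\mu_0(i)
=\int_{\Sigma_+}f(\omega)\overline{\Bigl(\sum_{i=0}^{\beta-1}p_i\,g(i*\omega)\Bigr)}\,\dd\mu_+(\omega),
\end{align*}
which, since $f$ is arbitrary in $L^2$, yields the stated formula for $V_S$.

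For part~(\ref{PF op and Walsh function 2}) I would argue directly from the definition $W_n(\omega)=\prod_{k\ge1}\psi_{s_k}(\omega_k)$ with $n=\sum_k s_k\beta^{k-1}$. The identity $U_SW_n=W_{\beta n}$ is immediate: composing with the shift relabels $\omega_k\mapsto\omega_{k+1}$, which corresponds to shifting the digit string $s_1,s_2,\dots$ up by one place, i.e. multiplying $n$ by $\beta$. For $V_S$ I would apply part~(\ref{PF op of one full shift 2}): $(V_SW_n)(\omega)=\sum_{i=0}^{\beta-1}p_i\,\psi_{s_1}(i)\prod_{k\ge2}\psi_{s_k}(\omega_k)=\Bigl(\sum_i p_i\psi_{s_1}(i)\Bigr)W_{\lfloor n/\beta\rfloor}(\omega)$. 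The scalar $\sum_i p_i\psi_{s_1}(i)$ is exactly the $L^2([\beta],\mu_0)$-inner product $(\psi_{s_1},\psi_0)=(\psi_{s_1},1)$, which by orthonormality of $\{\psi_0,\dots,\psi_{\beta-1}\}$ equals $1$ when $s_1=0$ (that is, $n\in\beta\Z$) and $0$ otherwise; when $s_1=0$ one has $\lfloor n/\beta\rfloor=n/\beta$, giving the claimed case distinction.

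For part~(\ref{spc of PF op 2}), the point spectrum computation is identical to the $\beta=2$ case: expanding an $L^2$-eigenfunction $f=\sum_n c_nW_n$ and comparing coefficients of $W_n$ for $n\in\beta\Z_{\ge0}$ via part~(\ref{PF op and Walsh function 2}) gives the recursion $c_n=\lambda c_{n/\beta}$; square-summability of $(c_n)$ forces either $\lambda=1$ with $c_n=0$ for all $n>0$ (so the eigenspace of $1$ is the constants), or $|\lambda|<1$, in which case for each $n$ not divisible by $\beta$ the vector $\sum_{r\ge0}\lambda^{r+1}W_{\beta^r n}$ is an eigenfunction, producing an infinite-dimensional eigenspace. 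For the continuous spectrum, for $|z|=1$, $z\neq1$, I would take $f_n:=n^{-1/2}\sum_{k=0}^{n-1}z^kW_{\beta^k}$, note $\|f_n\|=1$ by orthonormality, and use part~(\ref{PF op and Walsh function 2}) to get $(z-V_S)f_n=z^nW_{\beta^{n-1}}/\sqrt n\to0$; combined with the fact (from the point-spectrum analysis) that such $z$ is not an eigenvalue, this exhibits $z$ as a continuous spectrum point. Finally $\|V_S\|\le1$ (adjoint of the isometry $U_S$) confines $\sigma(V_S)$ to the closed unit disk, so the two lists are exhaustive.

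The only place requiring genuine care rather than bookkeeping is the continuous-spectrum step: one must confirm $\|f_n\|=1$, which relies on $\{W_{\beta^k}\}_k$ being an orthonormal set, and this in turn depends on the orthonormality of the chosen system $\{\psi_s\}$ in $L^2([\beta],\mu_0)$ propagating to orthonormality of the $W_n$ in $L^2(\Sigma_+,\mu_+)$ — a fact that follows because $W_n$ with distinct $n$ differ in at least one coordinate factor whose integral against $\mu_0$ then vanishes. Everything else is a direct transcription of the $\beta=2$ arguments, so I expect no real obstacle; the statement is noted in the paper to be provable "in the same way" as Proposition~\ref{prop: 193308242020}, and the sketch above is that way.
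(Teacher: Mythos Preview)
Your proposal is correct and follows exactly the route the paper intends: the paper gives no separate proof for this proposition, stating only that it ``is carried out in the same way as the case of the one-sided full $2$-shift,'' and your argument is precisely the direct $\beta$-branch, $p_i$-weighted transcription of the proof of Proposition~\ref{prop: 193308242020}. The one small addition you supply beyond bookkeeping---identifying the scalar $\sum_i p_i\psi_{s_1}(i)$ with the inner product $(\psi_{s_1},\psi_0)$ and invoking orthonormality---is exactly the right way to handle the only place where the general $\psi_s$ differ from the explicit characters of the $\beta=2$ case.
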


For $n \ge 1$, and $\omega=(\omega_1,\omega_2,\dots)\in\Sigma_+$, we define 
\[N_n^k(\omega)=\#\!\left \{ i \in \{1,2,\dots,n-1\} \,|\, \omega_i=k\right\}, \]
and $N_1^k(\omega)=0$ for all $k$. 
Then, we define $h:\Sigma_+\rightarrow [0,1]$, which has the same properties of the map (\ref{eq: def of h}), by
\begin{align}
    h(\omega) = \sum_{\substack{n\ge1 \\ \omega_n\neq0}} 
    (p_0+\dots+p_{\omega_n-1})p_0^{N_{n}^0(\omega)} \cdots p_{\beta-1}^{N_{n}^{\beta-1}(\omega)}.
    \label{eq: def of h general}
\end{align}
\begin{remark}
Regarding the definition of the map \eqref{eq: def of h general}, let us consider the following procedure to shrink intervals: for any closed interval $I=[a,b]$ and $\omega \in [\beta]$, define
\[I_\omega := \left[a+(p_0+\dots+p_{\omega-1})|I|, a + (p_0+\dots+p_{\omega})|I|\right] \subset I,\]
where $|I| := b-a$.
We here decompose $I$ into $\beta$ small intervals $I_\omega$ ($\omega \in [\beta]$) and $I_\omega$ is the $\omega$-th interval of length $p_{\omega}|I|$.
For $\omega_1,\dots, \omega_n \in [\beta]$, let $I_{\omega_1,\dots,\omega_n} := (\dots(I_{\omega_1})_{\omega_2} \dots)_{\omega_n}$.
Then, $h(\omega)$ is characterized as a unique point of the intersection of intervals:
\[\{h(\omega)\} = \bigcap_{n=1}^\infty I_{\omega_1,\dots, \omega_n},\]
where $\omega = (\omega_1,\omega_2, \dots) \in \Sigma_+$.
\end{remark}
Then, we have a similar proposition to Proposition \ref{prop: behavior of h via PF op} as follows:
\begin{proposition}
For any $i\in[\beta]$, 
\begin{align*}
    h(i*\omega) &= p_0+\cdots+p_{i-1} + p_ih(\omega).
\end{align*}
In particular, we have
\begin{align*}
    V_{S}[h] &= \bigg(\sum_{i\in[\beta]}p_i^2\bigg)h+\sum_{0\le i<j\le\beta-1}p_ip_j, \\
    V_{S}[h^n] &= \bigg( \sum_{i\in[\beta]} p_i^{n+1}\bigg)h^n + q(h).
\end{align*}
where $q(h)\in \C[h]$ is a polynomial of variable $h$ of degree smaller than $n$.
\end{proposition}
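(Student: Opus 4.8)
The plan is to mimic the proof of Proposition~\ref{prop: behavior of h via PF op} from the full $2$-shift, replacing the dyadic splitting by the $\beta$-adic one. First I would establish the affine recursion $h(i*\omega) = (p_0 + \cdots + p_{i-1}) + p_i h(\omega)$ for each $i \in [\beta]$. This follows directly from the interval-shrinking description of $h$ given in the Remark preceding the statement: the map $\omega \mapsto i*\omega$ corresponds, at the level of intervals, to the inclusion $I \mapsto I_i = [(p_0+\cdots+p_{i-1})|I|,\, (p_0+\cdots+p_i)|I|]$ applied to the unit interval, which is exactly the affine contraction $x \mapsto (p_0+\cdots+p_{i-1}) + p_i x$; since $h(\omega)$ is the nested intersection of the $I_{\omega_1,\dots,\omega_n}$, precomposing the whole sequence with the symbol $i$ applies this contraction to $h(\omega)$. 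Alternatively one checks the formula termwise against the explicit series \eqref{eq: def of h general}: prepending $i$ shifts every index $n$ to $n+1$, multiplies every term by $p_i$ (because each $N_{n}^{k}$ picks up one extra occurrence of the symbol $i$ when the leading symbol is $i$, contributing the factor $p_i$ uniformly since the product $p_0^{N^0}\cdots p_{\beta-1}^{N^{\beta-1}}$ gains exactly one factor $p_i$), and the $n=1$ term newly contributes $p_0+\cdots+p_{i-1}$ when $i\neq 0$.

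Next I would compute $V_S[h^n]$ using part~(\ref{PF op of one full shift 2}) of the preceding proposition, namely $(V_S f)(\omega) = \sum_{i\in[\beta]} p_i f(i*\omega)$. Substituting $f = h^n$ and the affine recursion gives
\begin{align*}
V_S[h^n](\omega) = \sum_{i\in[\beta]} p_i \bigl((p_0+\cdots+p_{i-1}) + p_i h(\omega)\bigr)^n.
\end{align*}
Expanding the $n$-th power by the binomial theorem, the top-degree term in $h$ is $\sum_{i\in[\beta]} p_i \cdot p_i^n h^n = \bigl(\sum_{i\in[\beta]} p_i^{n+1}\bigr) h^n$, and every other term has degree strictly less than $n$ in $h$; collecting these lower-order contributions into a single polynomial $q(h) \in \C[h]$ of degree $< n$ yields the displayed identity $V_S[h^n] = \bigl(\sum_{i\in[\beta]} p_i^{n+1}\bigr) h^n + q(h)$. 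For $n=1$ the expansion is exact: $V_S[h] = \sum_i p_i\bigl((p_0+\cdots+p_{i-1}) + p_i h\bigr) = \bigl(\sum_i p_i^2\bigr) h + \sum_i p_i(p_0+\cdots+p_{i-1})$, and rewriting $\sum_i p_i(p_0+\cdots+p_{i-1}) = \sum_{0\le i<j\le \beta-1} p_i p_j$ (each unordered pair $\{i,j\}$ with $i<j$ appears once, paired with the factor $p_j$) gives the stated closed form for $V_S[h]$.

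The only real subtlety — and the step I would be most careful about — is the bookkeeping in the affine recursion: making sure that prepending the symbol $i$ multiplies \emph{every} surviving term of the series \eqref{eq: def of h general} by the \emph{same} factor $p_i$, so that the sum factors cleanly as $p_i h(\omega)$ plus the single new leading term. The interval-theoretic argument via the Remark sidesteps this combinatorics entirely and is the cleanest route, so I would present that as the proof and relegate the series computation to a parenthetical remark. Everything after the recursion is a routine binomial expansion, so the proof is short; the author's one-line "It follows from the definition of $h$" is justified, though I would spell out at least the interval picture for the affine identity.
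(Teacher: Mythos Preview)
Your proposal is correct and follows exactly the approach the paper intends: the paper gives no proof of this proposition (Section~3.2 explicitly omits proofs, deferring to the analogous $2$-shift case, whose proof is the single line ``It follows from the definition of $h$''), and your argument via the interval-shrinking Remark together with the formula $(V_S f)(\omega)=\sum_i p_i f(i*\omega)$ is precisely the intended computation spelled out. The only cosmetic point is that your parenthetical about the constant term in the $n=1$ case could be phrased more cleanly as the relabeling $\sum_{i}\sum_{k<i} p_i p_k = \sum_{0\le k<i\le\beta-1} p_k p_i$, but the identity is of course correct.
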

We define a test space $X$, eigenvectors $\Phi_n\in X$, their dual elements $\Phi'_n\in X'$, and the dual operator $U_S^\times : X' \to X'$ of the Koopman operator in the same way illustrated in the previous subsection.
Then, in a similar manner to Theorem \ref{thm: generalized spectrum for the one-sided full 2-shift}, we have the following result:
\begin{theorem}\label{thm: generalized spectrum for the one-sided full beta-shift}\,
\\
(1) The resolvent $(\lambda -V_S)^{-1}$ has an analytic continuation
to the inside of the unit circle as an operator from $X$ into $X'$.
The generalized eigenvalues with respect to the Gelfand triplet $X \subset L^2(\Sigma_+, \mu_+) \subset X'$ are given by $\{\sum_{i\in[\beta]}p_i^{n+1}\}_{n\ge0}$.
\\
(2) $V_{S}$ and $U_S^\times$ have spectral decompositions of the form
\begin{eqnarray}
V_Sf &=& \sum^\infty_{i=0}\bigg(\sum_{n\in[\beta]}p_n^{i+1}\bigg) \overline{\langle \Phi_i' \,|\, f \rangle} \Phi_i, \quad f\in X, \label{thm3-12-1}\\
U_S^\times \mu &=& \sum^\infty_{i=0} \bigg(\sum_{n\in[\beta]}p_n^{i+1}\bigg) \langle \mu \,|\, \Phi_i \rangle \Phi_i', \quad \mu \in X'. \label{thm3-12-2}
\end{eqnarray}
\end{theorem}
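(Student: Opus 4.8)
The plan is to follow the blueprint established for the one-sided full $2$-shift in Theorem \ref{thm: generalized spectrum for the one-sided full 2-shift}, since the paper explicitly says the proof is carried out in the same way. First I would record the analogue of Proposition \ref{prop:2020-08-18-1323}: setting $X_n := \sum_{i=0}^n \C h^i$ with $h$ now given by \eqref{eq: def of h general}, the preceding proposition shows $V_S h^n = \big(\sum_{i\in[\beta]} p_i^{n+1}\big) h^n + q(h)$ with $\deg q < n$, so $V_S X_n \subset X_n$ and the matrix of $V_S|_{X_n}$ in the basis $1, h, \dots, h^n$ is upper triangular with diagonal entries $1, \sum_i p_i^2, \dots, \sum_i p_i^{n+1}$. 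One should first check these diagonal entries are pairwise distinct — since $\sum_i p_i^{m}$ is strictly decreasing in $m$ when the $p_i$ are a probability vector with at least two nonzero entries (indeed $\sum_i p_i^{m+1} = \sum_i p_i \cdot p_i^m < \max_i p_i^m \le \sum_i p_i^m$ unless some $p_i = 1$, which is excluded since all $p_i > 0$ and $\beta \ge 2$) — so that the eigenvalues $\gamma_n := \sum_{i\in[\beta]} p_i^{n+1}$ are genuinely distinct. Then, exactly as in Corollary \ref{cor: 202008241553}, the eigenspace of $V_S|_{X_n}$ for $\gamma_n$ is one-dimensional, and we let $\Phi_n \in X_n$ be the unique monic degree-$n$ eigenvector; define $\{\Phi_n'\} \subset X'$ by $\langle \Phi_i' | \Phi_j \rangle = \delta_{ij}$.

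Next, for part (1), I would repeat the Neumann-series computation verbatim: for $|\lambda| > 1$ and $f = \sum_{i=0}^m c_i \Phi_i \in X$, $g \in X$, expand $(\lambda - V_S)^{-1} = \lambda^{-1}\sum_{r\ge 0} \lambda^{-r} V_S^r$, use $V_S \Phi_i = \gamma_i \Phi_i$ to get
\[
\langle (\lambda - V_S)^{-1} f \,|\, g \rangle = \sum_{i=0}^m \frac{c_i \langle \Phi_i | g \rangle}{\lambda - \gamma_i},
\]
so that $(\lambda - V_S)^{-1} f = \sum_{i=0}^m \frac{c_i}{\lambda - \gamma_i} \Phi_i$ (a finite sum since $f \in X = \varinjlim X_n$), which is manifestly an $X'$-valued meromorphic continuation to all of $\C$ with simple poles exactly at the $\gamma_i$ that occur, and the Riesz projection at $\gamma_i$ picks out $c_i \Phi_i$. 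This identifies the generalized spectrum as $\{\gamma_n\}_{n\ge 0}$ with generalized eigenvectors $\Phi_n$. For part (2), applying $\Phi_n'$ to $f = \sum c_i \Phi_i$ gives $\overline{c_i} = \langle \Phi_i' | f \rangle$ (here one must be careful with the anti-linearity convention of the pairing, as in the original proof), hence $V_S f = \sum_i \gamma_i \overline{\langle \Phi_i' | f \rangle} \Phi_i$, proving \eqref{thm3-12-1}; pairing with $\mu \in X'$ and using $\langle \mu | V_S f \rangle = \langle \mu | U_S^* f \rangle = \langle U_S^\times \mu | f \rangle$ then yields \eqref{thm3-12-2} exactly as before.

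The only genuinely new points relative to the $2$-shift are: (i) verifying that $h$ as defined by \eqref{eq: def of h general} really does induce the $L^2$-isomorphism $L^2([0,1], \dd x) \cong L^2(\Sigma_+, \mu_+)$ (needed for density of $X$, via Stone–Weierstrass, and hence for the Gelfand triplet to be well-defined) — this is the content of the remark with the nested-intervals construction and should be cited or dispatched with a short measure-theoretic argument showing $h$ pushes $\mu_+$ forward to Lebesgue measure and is a bijection off a countable set; and (ii) the distinctness of the eigenvalues $\gamma_n$, noted above. I expect (i) — confirming that the $\beta$-adic Cantor-type coding $h$ is measure-theoretically an isomorphism in the non-uniform case — to be the main obstacle, though it is routine; everything downstream is a mechanical transcription of the $2$-shift argument with $2^{-n}$ replaced by $\gamma_n = \sum_{i\in[\beta]} p_i^{n+1}$.
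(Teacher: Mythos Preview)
Your proposal is correct and follows exactly the approach the paper intends: the paper gives no separate proof of this theorem, stating only that ``the outline of the proof is carried out in the same way as the case of the one-sided full $2$-shift,'' and you have faithfully transcribed that argument with $2^{-n}$ replaced by $\gamma_n = \sum_{i\in[\beta]} p_i^{n+1}$, correctly flagging the two new ingredients (density of $X$ via the measure-preserving property of $h$, and distinctness of the $\gamma_n$). One small slip: in your distinctness check the strict inequality is misplaced --- the convex-combination bound gives $\sum_i p_i\cdot p_i^m \le \max_i p_i^m$ (with equality when all $p_i$ are equal), but the conclusion is saved since $\sum_i p_i^{m+1} \le (\max_j p_j)\sum_i p_i^m < \sum_i p_i^m$ whenever $\max_j p_j < 1$.
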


As a corollary, we have an asymptotic behavior of the Perron-Frobenius operator:
\begin{corollary}
For any $f \in X$, we have 
\[V_S^kf \to \int_{\Sigma_+} f \dd\mu_+ \cdot \Phi_0\]
as $k \to \infty$.
\end{corollary}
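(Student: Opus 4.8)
The plan is to mimic the proof of the corresponding corollary in the one-sided full $2$-shift case verbatim, since Theorem \ref{thm: generalized spectrum for the one-sided full beta-shift} provides all the structural ingredients. First I would take an arbitrary $f \in X = \C[h]$ and expand it in the eigenbasis as $f = \sum_{i=0}^m c_i \Phi_i$, which is possible because the argument behind Corollary \ref{cor: 202008241553} carries over (the representation matrix of $V_S|_{X_n}$ is upper triangular with distinct diagonal entries $\sum_{n\in[\beta]}p_n, \dots, \sum_{n\in[\beta]}p_n^{m+1}$, hence the $\Phi_i$ form a basis of $X_m$). Applying $V_S$ iteratively and using $V_S\Phi_i = \left(\sum_{n\in[\beta]}p_n^{i+1}\right)\Phi_i$ gives
\[
V_S^k f = \sum_{i=0}^m c_i \Bigl(\sum_{n\in[\beta]}p_n^{i+1}\Bigr)^k \Phi_i.
\]
Since $0 < p_n < 1$ for every $n$ and $\sum_n p_n = 1$, we have $\sum_{n\in[\beta]}p_n^{i+1} < 1$ for all $i \ge 1$ (strict, because raising each $p_n \in (0,1)$ to a power $> 1$ strictly decreases it), while for $i=0$ the factor is exactly $\sum_n p_n = 1$. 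Hence every term with $i \ge 1$ decays geometrically and $V_S^k f \to c_0 \Phi_0$ as $k\to\infty$ in $X$ (indeed in $X_m$, a finite-dimensional space, so the topology is unambiguous).

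It then remains to identify the constant $c_0$ with $\int_{\Sigma_+} f\,\dd\mu_+$. As in the $2$-shift case, this reduces to showing $\int_{\Sigma_+}\Phi_n\,\dd\mu_+ = 0$ for every $n \ge 1$, after which linearity and $\Phi_0 \equiv 1$ finish the job. I would prove this by the same trick: using $\Phi_0 = 1$, $U_S\Phi_0 = \Phi_0$, the adjoint relation $\int \Phi_n U_S[\Phi_0]\,\dd\mu_+ = \int V_S[\Phi_n]\Phi_0\,\dd\mu_+$, and $V_S\Phi_n = \bigl(\sum_{n'\in[\beta]}p_{n'}^{n+1}\bigr)\Phi_n$, one obtains $\int_{\Sigma_+}\Phi_n\,\dd\mu_+ = \bigl(\sum_{n'\in[\beta]}p_{n'}^{n+1}\bigr)\int_{\Sigma_+}\Phi_n\,\dd\mu_+$; since the eigenvalue factor is strictly less than $1$ for $n \ge 1$, the integral must vanish.

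There is really no serious obstacle here — the only point requiring a moment's care is the strict inequality $\sum_{n\in[\beta]}p_n^{i+1} < 1$ for $i \ge 1$, which is what makes both the convergence and the vanishing-integral argument work; it is immediate from $0 < p_n < 1$ (we use that the $p_n$ are \emph{positive}, as assumed in this subsection, so none of them equals $1$). One could also phrase the whole corollary as a direct consequence of the spectral decomposition \eqref{thm3-12-1} applied $k$ times, but the finite expansion $f = \sum_{i=0}^m c_i\Phi_i$ makes the limit transparent without any subtlety about interchanging limits in $X'$. I would present it in the short, self-contained form paralleling the $2$-shift corollary.
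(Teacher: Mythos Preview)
Your proposal is correct and is exactly the approach the paper intends: Section~\ref{subsec: full beta shift} explicitly states that all proofs are carried out as in the full $2$-shift case, so the corollary is proved verbatim by the argument you outline (finite eigenbasis expansion, geometric decay of the eigenvalues $\sum_n p_n^{i+1}<1$ for $i\ge1$, and the adjoint trick to show $\int_{\Sigma_+}\Phi_n\,\dd\mu_+=0$). Your extra remark that the eigenvalues are strictly less than~$1$ because each $p_n\in(0,1)$ is precisely the only point that needs checking beyond the $\beta=2$ case.
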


\subsection{An one-sided partial subshift}
\label{subsec: partial shift}
In this subsection, we consider the shift map $S$ on $\Sigma_+^A$ associated to a one-sided subshift defined by the adjacency matrix 
\[ A=
\left(
\begin{array}{cc}
    1 & 1 \\
    1 & 0
\end{array}
\right),
\]
which is introduced in the beginning of Section \ref{sec: generalized spectrum of one-sided shifts}.
For $i_1,\dots,i_r = 0, 1$, we define a cylinder set $C[i_1, \dots, i_r]\subset \Sigma_+^A$ by
\[ C[i_1,\dots, i_r] := \left\{(\omega_1,\omega_2, \dots, )\in \Sigma_+^A \,|\, \omega_k = i_k \text{ for }k=1,\dots,r\right\}.\]
We regard $\Sigma_+^A$ as a measurable space with $\sigma$-algebra generated by the above cylinder sets.

At first, we fix an invariant measure on $\Sigma_+^A$.
Let $\varphi:=(1+\sqrt{5})/2$ be the golden ratio.
Then, we define a transition matrix $P=(p_{ij})_{i,j=0,1}$, and a probability vector $\pi=(\pi_0, \pi_1)$ as follows:
\[ P=
\left(
\begin{array}{cc}
     \varphi^{-1}& \varphi^{-2}  \\
     1& 0
\end{array}
\right),~\pi=\frac{1}{\varphi^2+1}(\varphi^2, 1).\]
We define the {\em Markov measure} $\mu_+$ associated to $P$ by
\[\mu_+(C[i_1,\dots,i_r]) := \pi_{i_1} p_{i_1\, i_2}\cdots p_{i_{r-1}\, i_r} \]
for arbitrary finite $i_1,\dots, i_r = 0,1$.
Since $(P, \pi)$ is stationary (i.e. $\pi P = \pi$), $\mu_+$ is an invariant measure on $(\Sigma_+^A, S)$~\cite{CFS, Wal}.
We define a map $h: \Sigma_+^A \rightarrow [0,1]$ by
\[h(\omega)= \sum_{k=1}^\infty \frac{\omega_k}{\varphi^{k}}.\]
We remark that $h$ is continuous and surjective, and induces a homeomorphism into its image outside a countable subset of $\Sigma_+^A$.

\begin{proposition}
Let $\rho \in L^1([0,1])$ be a density function corresponding to the push-forward measure $h_*\mu_+$, namely, $h_*\mu_+ = \rho(x)\dd{x}$.
Then, we have
\begin{align}\label{3-24}
    \rho(x) =
    \begin{cases}
        \displaystyle \frac{\varphi^3}{1+\varphi^2} & \text{ if } 0\le x \le \frac{1}{\varphi},\\[13pt]
        \displaystyle \frac{\varphi^2}{1+\varphi^2} & \text{ if }\frac{1}{\varphi} < x \le 1.
    \end{cases}
\end{align}
\end{proposition}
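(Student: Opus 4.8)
The plan is to compute the push-forward measure $h_*\mu_+$ directly on a generating family of intervals and then identify the density. First I would work out the self-similar structure of $h$ induced by the shift. From $h(\omega) = \sum_k \omega_k \varphi^{-k}$ and the admissibility constraint $a_{\omega_i\omega_{i+1}}=1$ (so that the symbol $1$ is never followed by $1$), one sees that $h(0*\omega) = \varphi^{-1}h(\omega)$ with $\omega$ ranging over all of $\Sigma_+^A$, while $h(1*\omega) = \varphi^{-1} + \varphi^{-1}h(\omega)$ with $\omega$ constrained to start with $0$, i.e. $\omega \in C[0]$. Since $h(\Sigma_+^A) = [0,1]$, $h(C[0]) = [0,\varphi^{-1}]$ (this is the image of sequences starting with $0$, hence of $\varphi^{-1}\cdot h(\Sigma_+^A)$) and $h(C[1]) = h(1*C[0]) = [\varphi^{-1},1]$, which explains why the breakpoint of $\rho$ sits at $1/\varphi$. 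These identities let one express $h_*\mu_+$ restricted to each half-interval in terms of $h_*$ of conditional measures on $C[0]$ and $C[1]$.

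Next I would set up a self-referential equation for $\rho$. Writing $\mu_+^{(i)}$ for the measure on $\Sigma_+^A$ defined by $\mu_+^{(i)}(\,\cdot\,) = \mu_+(i * \cdot)/\pi_i$ restricted to its natural domain, the Markov property gives that pushing $\mu_+$ through $h$ and decomposing according to the first symbol yields, for a test set $B \subseteq [0,1]$,
\begin{align*}
(h_*\mu_+)(B) = \pi_0\,(h_*\nu_0)(\varphi\,B \cap [0,1]) + \pi_1\,(h_*\nu_1)\big(\varphi(B - \varphi^{-1})\cap[0,1]\big),
\end{align*}
where $\nu_0$ is the conditional law of the tail given $\omega_1=0$ and $\nu_1$ that given $\omega_1=1$; here $\nu_1$ is supported on $C[0]$ because $1$ must be followed by $0$, whereas $\nu_0$ is a mixture of the laws on $C[0]$ and $C[1]$ with weights $p_{00}=\varphi^{-1}$ and $p_{01}=\varphi^{-2}$. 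Feeding in the stationarity $\pi P = \pi$ and the numerical values $\pi_0 = \varphi^2/(\varphi^2+1)$, $\pi_1 = 1/(\varphi^2+1)$, this collapses to a closed linear system for the two unknown constant values of $\rho$ on $[0,\varphi^{-1}]$ and $(\varphi^{-1},1]$; solving it gives $\varphi^3/(1+\varphi^2)$ and $\varphi^2/(1+\varphi^2)$ respectively. As a sanity check one verifies $\varphi^{-1}\cdot \varphi^3/(1+\varphi^2) + (1-\varphi^{-1})\cdot\varphi^2/(1+\varphi^2) = 1$ using $\varphi^2 = \varphi+1$, confirming that $\rho$ is a probability density.

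Alternatively — and this may be the cleaner route to actually write down — I would guess the piecewise-constant ansatz $\rho = c_0\mathbf{1}_{[0,1/\varphi]} + c_1\mathbf{1}_{(1/\varphi,1]}$ from the outset and simply verify it by checking $\mu_+(C[i_1,\dots,i_r]) = \int_{h(C[i_1,\dots,i_r])} \rho\,\dd x$ for all admissible cylinders, by induction on $r$. The base cases $r=1$ pin down $c_0$ and $c_1$ from $\mu_+(C[0]) = \pi_0 = \int_0^{1/\varphi}\rho$ and $\mu_+(C[1]) = \pi_1 = \int_{1/\varphi}^1 \rho$; the inductive step uses the scaling relations for $h$ above together with $\mu_+(C[i_1,\dots,i_r]) = p_{i_1 i_2}\,\mu_+^{(i_1)}(C[i_2,\dots,i_r])$ and the fact that $h$ maps $C[i_1,\dots,i_r]$ affinely onto a subinterval, with a Jacobian that exactly matches the ratio of $p$'s to lengths. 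The main obstacle is bookkeeping: one must track carefully that the interval $h(C[1,i_2,\dots,i_r])$ lies entirely inside $(1/\varphi,1]$ while $h(C[0,i_2,\dots,i_r])$ may straddle the breakpoint $1/\varphi$, so the induction on cylinders starting with $0$ needs to be organized by the second symbol as well; once that case analysis is in place the verification is a routine computation using $\varphi^2=\varphi+1$ repeatedly, and uniqueness of the density then finishes the proof since cylinders generate the Borel $\sigma$-algebra.
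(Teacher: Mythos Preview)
Your second approach---guess the piecewise-constant density and verify on cylinders---is exactly the paper's strategy, but the paper executes it more efficiently than your induction. The key observation you are missing is that for this particular Markov measure the cylinder mass has a closed form depending only on the length $k$ and the \emph{endpoints} $i_1,i_k$:
\[
\mu_+(C[i_1,\dots,i_k]) \;=\; \frac{\varphi^{\,3-k-i_1-i_k}}{1+\varphi^2}.
\]
This drops out of the transition probabilities because every $0\!\to\!0$ step contributes $\varphi^{-1}$, every $0\!\to\!1$ step $\varphi^{-2}$, and every $1\!\to\!0$ step $1$, and a short count shows the exponent telescopes. With this formula in hand one simply checks $\int_a^b \rho = \mu_+(C[i_1,\dots,i_k])$ for the interval $[a,b]=h(C[i_1,\dots,i_k])$ in each of the four cases, with no induction needed. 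Your worry about cylinders straddling the breakpoint $1/\varphi$ is also unnecessary: since $h(C[0])=[0,\varphi^{-1}]$ and $h(C[1])=[\varphi^{-1},1]$, every cylinder of length $\ge 1$ lands entirely in one piece.

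Your first approach via a self-consistency equation for $\rho$ is a genuinely different route. It is correct in spirit---the Markov structure does give a functional equation for $h_*\mu_+$ under the affine branches $x\mapsto \varphi x$ and $x\mapsto \varphi x - 1$---but as written it has a gap: the equation you obtain relates $h_*\mu_+$ to the pushforwards $h_*\nu_0,\,h_*\nu_1$ of the \emph{conditional} laws, not to $h_*\mu_+$ itself, so it is not immediately a linear system in two unknowns. To close the loop you would either iterate one more step (condition on two symbols, using that $\nu_1$ is supported on $C[0]$) or first argue independently that $\rho$ is constant on each of $[0,\varphi^{-1}]$ and $(\varphi^{-1},1]$. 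Once that is in place, the linear system does solve to the stated values; but at that point you are effectively back to the verification approach anyway.
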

\begin{proof}
We first show the following formula:
\begin{align}
h^{-1}([a,b]) = C[i_1,\dots,i_k]. \label{h inverse a b equal C i}
\end{align}
Here,
\begin{align}
\label{ab no katachi}
\begin{aligned}
    a &=h(i_1,\dots,i_k,0,0,\dots), \\
    b &=h(i_1, \dots, i_k,s,t,s,t,\dots),
\end{aligned}
\end{align}
where $(s,t)=(1,0)$ if $i_k=0$ and $(s,t)=(0,1)$ if $i_k=1$.
Here, note that a sequence $(\cdots ,1,1, \cdots)$ is impossible due to the definition of $A$.
The inclusion $\supset$ is obvious. 
Conversely, we easily see that if $\omega \notin C[i_1,\dots, i_k]$, then $h(\omega)<a$ or $h(\omega)>b$.
Thus, we have the formula \eqref{h inverse a b equal C i}. 
By a direct calculation, we have
\[\mu_+(C[i_1,\dots, i_k]) = 
\begin{cases}
(1 + \varphi^2)^{-1} \varphi^{3-k} & \text{ if } (i_1, i_k) = (0,0),\\
(1 + \varphi^2)^{-1} \varphi^{2-k} & \text{ if }(i_1, i_k) = (0,1),\\
(1 + \varphi^2)^{-1} \varphi^{2-k} & \text{ if } (i_1, i_k) = (1,0),\\
(1 + \varphi^2)^{-1} \varphi^{1-k} & \text{ if }(i_1, i_k) = (1,1).\\
\end{cases}
\]
We can verify that $\mu_+(C[i_1,\dots, i_k])$ above and $\rho(x)$ given by (\ref{3-24}) satisfies
\[\int_{a}^b \rho(x) \dd{x} = \mu_+(C[i_1,\dots, i_k]).\]
Since $[a,b]$'s for any $a,b$ in the form of \eqref{ab no katachi} generate the Borel $\sigma$-algebra of $[0,1]$, we obtain $h_*\mu_+ = \rho(x) \dd{x}$.
\end{proof}
\begin{remark}
We note that the shift map $S$ on $\Sigma_+^A$ is conjugate to the multiplication mapping $T_\varphi$ by the golden ratio on $[0,1]$: $x\mapsto \varphi x \mod 1$ in the sense of $U_Sh^*=h^*U_{T_\varphi}$.
Then, $\rho(x) \dd{x}$ is an invariant measure for $T_\varphi$.
\end{remark}
For the Perron-Frobenius operator $V_S$, we have the following formula:
\begin{proposition}
\label{prop: VS for partial shift}
For $f \in L^2(\Sigma_+^A, \mu_+)$, we have
\begin{align}
    V_S[f](\omega) 
    = \frac{1}{\varphi}\mathbf{1}_{C[0]}(\omega) f(0*\omega) 
    + \frac{1}{\varphi^2}\mathbf{1}_{C[0]}(\omega) f(1*\omega) 
    + \mathbf{1}_{C[1]}(\omega) f(0*\omega).
\end{align}
\end{proposition}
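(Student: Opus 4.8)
The plan is to compute $V_S$ directly from its defining relation $(V_S f, g)_{L^2} = (f, U_S g)_{L^2} = (f, g\circ S)_{L^2}$, exactly as in the proof of Proposition \ref{prop: 193308242020}(\ref{PF op of one full shift}), but now keeping track of the constraint imposed by the adjacency matrix $A$. First I would write out $(f, U_S g)$ as an integral over $\Sigma_+^A$ against $\mu_+$ and split the domain of integration according to the value of the first coordinate $\omega_1 \in \{0,1\}$, i.e. over the two cylinder sets $C[0]$ and $C[1]$. On each piece I would use the disintegration of the Markov measure $\mu_+$ with respect to the first coordinate: conditioned on $\omega_1 = i$, the measure of $(\omega_2,\omega_3,\dots)$ is again a Markov measure governed by $P$, and the key point is that the allowed second coordinates are precisely those $j$ with $p_{ij} > 0$. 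For $i=0$ both $j=0$ and $j=1$ are allowed (with conditional weights proportional to $p_{00} = \varphi^{-1}$ and $p_{01} = \varphi^{-2}$), whereas for $i=1$ only $j=0$ is allowed (with conditional weight $p_{10} = 1$).

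The concrete computation I anticipate is the following. Writing $\mu_+^{(i)}$ for the pushforward to $\Sigma_+^A$ of the shifted conditional law of $\omega$ given $\omega_1 = i$ (which is the Markov measure started from state $i$), one has, for any integrand $F$,
\begin{align*}
\int_{\Sigma_+^A} F(\omega)\, \dd\mu_+(\omega)
 = \pi_0 \int_{\Sigma_+^A} F(0*\widetilde\omega)\, \dd\mu_+^{(0)}(\widetilde\omega)
 + \pi_1 \int_{\Sigma_+^A} F(1*\widetilde\omega)\, \dd\mu_+^{(1)}(\widetilde\omega).
\end{align*}
Applying this with $F(\omega) = f(\omega_2,\omega_3,\dots)\overline{g(\omega)}$ and then re-expanding $\mu_+^{(i)}$ one further step using $P$, the terms reorganize into a single integral of $f\cdot\overline{\,\cdot\,}$ against $\mu_+$, and reading off the factor multiplying $\overline{g}$ yields the claimed pointwise formula for $V_S f$. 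The indicator functions $\mathbf 1_{C[0]}$ and $\mathbf 1_{C[1]}$ appear because the preimages under $S$ of a point $\omega$ are exactly $0*\omega$ (always allowed, since $a_{0\omega_1}=1$ for both $\omega_1$) and $1*\omega$ (allowed only when $\omega_1 = 0$, since $a_{11}=0$), and the coefficients $\varphi^{-1}, \varphi^{-2}, 1$ are exactly the transition probabilities $p_{00}, p_{01}, p_{10}$ appearing in the disintegration.

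The main obstacle, and the step to be careful about, is the bookkeeping of the conditional measures: one must verify that the disintegration of $\mu_+$ along the first coordinate really produces the Markov measures $\mu_+^{(i)}$ with the stated conditional weights, and that the stationarity relation $\pi P = \pi$ together with $\pi_i p_{ij}$ being symmetric in the right way makes the weights collapse to $p_{00}, p_{01}, p_{10}$ rather than to some ratio involving $\pi$. Once the disintegration is set up correctly on cylinder sets (where $\mu_+$ is defined explicitly), it extends to all $L^2$ integrands by the usual density argument, and the rest is a routine change of variables $\omega \mapsto x*\widetilde\omega$. I would also remark that this formula is consistent with the semiconjugacy to the golden-ratio multiplication map $T_\varphi$ and with the density $\rho$ computed in the preceding proposition, which gives a useful sanity check on the coefficients.
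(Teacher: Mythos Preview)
Your plan is essentially the same as the paper's proof: both split the integral $\int_{\Sigma_+^A} f\,\overline{U_Sg}\,\dd\mu_+$ according to the first coordinate and identify the Radon--Nikodym factor on each cylinder $C[j]$. The paper phrases this via the pushforward $(\sigma_i)_*\mu_+$ of $\mu_+|_{C[i]}$ under $S|_{C[i]}$ and computes directly on cylinder sets that $(\sigma_i)_*\mu_+ = \tfrac{\pi_i p_{ij}}{\pi_j}\,\mu_+$ on $C[j]$; your disintegration $\mu_+^{(i)}$ is the same object up to the normalizing factor $\pi_i$, and ``re-expanding one further step'' is exactly this Radon--Nikodym computation.

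Two small points to clean up. First, your choice $F(\omega)=f(\omega_2,\omega_3,\dots)\overline{g(\omega)}$ is the integrand of $(U_Sf,g)$, not of $(f,U_Sg)$; to read off $V_Sf$ you want $F(\omega)=f(\omega)\overline{g(S\omega)}$, so that after disintegration $g$ is evaluated at the new variable and $f$ at $i*\omega$. Second, the coefficients that emerge from the computation are $\tfrac{\pi_i p_{ij}}{\pi_j}$ (with $i$ the prepended symbol and $j=\omega_1$), not $p_{ij}$ directly. That these equal $p_{00},p_{01},p_{10}$ is the reversibility identity $\pi_i p_{ij}=\pi_j p_{ji}$ for this particular chain, which is the ``symmetry of $\pi_i p_{ij}$'' you allude to; it is worth stating and checking this explicitly, since for a non-reversible Markov measure the coefficients would genuinely involve $\pi$. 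The paper sidesteps this by leaving the answer in the form $\tfrac{\pi_i p_{ij}}{\pi_j}$ and then evaluating numerically.
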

\begin{proof}
Let $f, g\in L^2(\Sigma_+^A, \mu_+)$ be arbitrary elements.
Then, we have
\begin{align*}
    \int_{\Sigma_+^A} f \overline{U_Sg} \dd\mu_+ = \sum_{i=0,1} \int_{C[i]}f \overline{U_Sg} \dd\mu_+. 
\end{align*}
For $i=0,1$, we denote by $\sigma_i$ the map $S|_{C[i]}: C[i] \rightarrow  \Sigma_+^A$.
Then, we have
\begin{align*}
   \sum_{i=0,1} \int_{C[i]}f \overline{U_Sg} \dd\mu_+ &= \sum_{i=0,1} \int_{\Sigma_+^A}f(i*\omega) \overline{g(\omega)} \dd(\sigma_i)_*\mu_+(\omega) \\
    &= \sum_{i,j=0,1} \int_{C[j]} f(i*\omega) \overline{g(\omega)} \dd(\sigma_i)_*\mu_+(\omega).
\end{align*}
For any $i_1,\dots, i_k = 0,1$, we have 
\begin{align*}
    (\sigma_i)_*(\mu_+|_{C[i]})(C[j,i_1,\dots,i_k]) &= \mu_+(C[i,j,i_1,\dots,i_k]) \\
    &=\pi_i p_{ij}p_{ji_1}\cdots p_{i_{k-1}i_k} \\
    &=\frac{\pi_i p_{ij}}{\pi_j} \mu_+(C[j,i_1,\dots,i_k]).
\end{align*}  
Thus, we see that
\begin{align*}
    \sum_{i,j=0,1} \int_{C[j]} f(i*\omega) \overline{g(\omega)}\,\dd(\sigma_i)_*\mu_+(\omega) 
    = \sum_{i,j = 0,1}\int_{C[j]}f(i*\omega) \overline{g(\omega)} \frac{\pi_i p_{ij}}{\pi_j} \,\dd\mu_+(\omega).
\end{align*}
Therefore, we have
\begin{align*}
    &\int_{\Sigma_+^A} V_S[f] \overline{g} \,\dd\mu_+\\
    &=\int_{\Sigma_+^A} f \overline{U_S[g]} \,\dd\mu_+ \\
    &= \int_{\Sigma_+^A} \left( \sum_{i,j=0,1}\frac{\pi_i p_{ij}}{\pi_j} \mathbf{1}_{C[j]}(\omega)f(i*\omega)\right) \overline{g(\omega)} \,\dd\mu_+(\omega)\\
    & = \int_{\Sigma_+^A} \left\{ \frac{\mathbf{1}_{C[0]}(\omega)f(0*\omega)}{\varphi} 
    + \frac{\mathbf{1}_{C[0]}(\omega) f(1*\omega)}{\varphi^2} 
    + \mathbf{1}_{C[1]}(\omega)f(0*\omega)\right\} \\
    &\hspace{50pt} \times \overline{g(\omega)} \dd\mu_+(\omega).
\end{align*}

\end{proof}

\paragraph{Calculation of the generalized eigenvalues.}
We compute the generalized eigenvalues for the subshift.
We specify the test space $X$.
Let \[W := \mathbb{C} \mathbf{1}_{C[0]} \oplus_\mathbb{C} \mathbb{C} \mathbf{1}_{C[1]}\] be a 2-dimensional subspace of $L^2(\Sigma_+^A, \mu_+)$.
Then, we define 
\begin{align*}
    X_n &:= W \otimes \bigoplus_{i=0}^n \mathbb{C}h^i, \\
    X &:= \lim_{\underset{n}{\longrightarrow}} X_n = W\otimes_\mathbb{C} \mathbb{C}[h].
\end{align*}

We have a similar proposition to Proposition \ref{prop:2020-08-18-1323}:
\begin{proposition}
For $n\in\Z_{\ge0}$, we have
\[V_{S}X_n\subset X_n.\]
Moreover, the representation matrix of $V_{S}|_{X_n}$ associated to the basis 
\[\mathbf{1}_{C[0]},\mathbf{1}_{C[1]},\dots,\mathbf{1}_{C[0]}h^n, \mathbf{1}_{C[1]}h^n\]
 is in the form of
\[
\left(
\begin{array}{cccc}
    Q_0 &  &* & \\
    0 & Q_1 &  &* \\
    \vdots &   \ddots & \ddots & \\
    0 & \cdots & 0 & Q_n 
\end{array}
\right),
\]
where 
\[
Q_n:=\left(
\begin{array}{cc}
    \varphi^{-n-1} & \varphi^{-n-2} \\
    \varphi^{-n} & 0
\end{array}
\right).
\]
\end{proposition}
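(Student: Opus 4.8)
The plan is to mimic the proof of Proposition~\ref{prop:2020-08-18-1323}, the only new feature being that the ``coefficient ring'' of the polynomial algebra $\C[h]$ is now replaced by the $2$-dimensional module $W$, so that each power $h^k$ carries a $2\times 2$ block rather than a scalar. First I would establish the two basic substitution rules for $h$ on $\Sigma_+^A$, exactly as in Proposition~\ref{prop: behavior of h via PF op}: from $h(\omega)=\sum_{k\ge 1}\omega_k/\varphi^k$ one reads off
\begin{align*}
    h(0*\omega) &= \varphi^{-1} h(\omega), \\
    h(1*\omega) &= \varphi^{-1} + \varphi^{-2} h(\omega),
\end{align*}
the second line using $1/\varphi + (1/\varphi)^2 = 1$. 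Note that $1*\omega$ is admissible only when $\omega\in C[0]$, so the term $\mathbf 1_{C[0]}(\omega)f(1*\omega)$ in Proposition~\ref{prop: VS for partial shift} is the only place the adjacency constraint intervenes; this is why the second column of each $Q_n$ has a single nonzero entry.

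Next I would compute $V_S$ on a basis element $\mathbf 1_{C[i]}h^k$. Plugging $f=\mathbf 1_{C[i]}h^k$ into the formula of Proposition~\ref{prop: VS for partial shift}, the three indicator factors $\mathbf 1_{C[0]}(\omega),\mathbf 1_{C[0]}(\omega),\mathbf 1_{C[1]}(\omega)$ in front become $\mathbf 1_{C[0]}(\omega)\mathbf 1_{C[i]}(0*\omega)$ etc.; since $0*\omega\in C[0]$ and $1*\omega\in C[1]$ always, these indicators reduce to $\mathbf 1_{C[0]}$ or $\mathbf 1_{C[1]}$ according to the fixed symbol, independent of $\omega$. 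Substituting the two rules for $h(0*\omega)$ and $h(1*\omega)$ and expanding by the binomial theorem, every occurrence of $h(0*\omega)^k = \varphi^{-k}h^k$ contributes the top-row entries $\varphi^{-k-1},\varphi^{-k-2}$ to the leading block, while $h(1*\omega)^k = (\varphi^{-1}+\varphi^{-2}h)^k = \varphi^{-k}h^k + (\text{lower order in }h)$ contributes the bottom-left entry $\varphi^{-k}$ and nothing in the bottom-right slot; all the lower-order terms land in $X_{k-1}$, i.e. strictly above the diagonal block in the stated ordering. This simultaneously proves the invariance $V_S X_n\subset X_n$ (each generator maps into $W\otimes\bigoplus_{i\le k}\C h^i$) and pins down the block-upper-triangular shape with diagonal blocks $Q_k$ as claimed.

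The one point requiring a little care — and what I expect to be the main (though modest) obstacle — is bookkeeping the binomial expansion of $(\varphi^{-1}+\varphi^{-2}h)^k$ to be sure that its \emph{top-degree} coefficient is exactly $\varphi^{-k}$ and that no contribution to $h^k$ leaks out of the $C[1]$-slot, so that the $(2,1)$ entry of $Q_k$ really is $\varphi^{-k}$ and the $(2,2)$ entry is $0$; likewise one should double-check the indicator algebra $\mathbf 1_{C[0]}(\omega)\mathbf 1_{C[0]}(0*\omega)=\mathbf 1_{C[0]}(\omega)$ and $\mathbf 1_{C[1]}(\omega)\mathbf 1_{C[0]}(0*\omega)=\mathbf 1_{C[1]}(\omega)$ so that the matrix entries are genuine constants in $W$ and not functions. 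Once these verifications are in place, the matrix of $V_S|_{X_n}$ in the basis $\mathbf 1_{C[0]},\mathbf 1_{C[1]},\dots,\mathbf 1_{C[0]}h^n,\mathbf 1_{C[1]}h^n$ has precisely the stated block form, which completes the proof. (As before, continuity of $V_S$ on $X$ then follows from the inductive-limit topology, though that is not part of the present statement.)
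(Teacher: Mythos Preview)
Your approach is exactly the paper's (whose proof reads, in full, ``It follows from a direct calculation with Proposition~\ref{prop: VS for partial shift}''), and the overall structure is right: feed $f=\mathbf 1_{C[i]}h^k$ into the three-term formula for $V_S$, use $\mathbf 1_{C[i]}(0*\omega)=\delta_{i,0}$ and $\mathbf 1_{C[i]}(1*\omega)=\delta_{i,1}$ to collapse the indicators, and read off the block-upper-triangular matrix.

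There are, however, two slips you should repair. First, the substitution rule for the prefix $1$ is
\[
h(1*\omega)=\varphi^{-1}+\varphi^{-1}h(\omega),
\]
not $\varphi^{-1}+\varphi^{-2}h(\omega)$; the golden-ratio identity plays no role here, since $h(1*\omega)=\varphi^{-1}+\varphi^{-1}h(\omega)$ follows directly from the definition of $h$. With your stated formula, $(\varphi^{-1}+\varphi^{-2}h)^k$ would have leading coefficient $\varphi^{-2k}$, not the $\varphi^{-k}$ you (correctly) want; the correct expansion $h(1*\omega)^k=\varphi^{-k}(1+h)^k=\varphi^{-k}h^k+\text{(lower)}$ is what gives the $(1,2)$ entry $\varphi^{-2}\cdot\varphi^{-k}=\varphi^{-k-2}$. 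Second, your attribution of entries is transposed: $h(0*\omega)^k$ feeds the \emph{first column} of $Q_k$ (entries $\varphi^{-k-1}$ and $\varphi^{-k}$, coming from the $\varphi^{-1}\mathbf 1_{C[0]}$ and $\mathbf 1_{C[1]}$ terms respectively), while $h(1*\omega)^k$ feeds the \emph{second column} (entry $\varphi^{-k-2}$ in the top slot, $0$ in the bottom). Once you fix these two points, the calculation goes through verbatim.
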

\begin{proof}
It follows from a direct calculation with Proposition \ref{prop: VS for partial shift}.
\end{proof}
Then, we have the following corollary using the same argument of the proof of Corollary \ref{cor: 202008241553}:
\begin{corollary}
\label{cor: gen spec for par shift}
The eigenvalues of $V_S|_X$ on $X$ are $\varphi^{-n}, -\varphi^{-n-2}$ for $n\ge0$, which are eigenvalues of $Q_n$'s.
Moreover, each eigenspace is one-dimensional.
\end{corollary}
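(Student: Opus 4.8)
The plan is to mimic the proof of Corollary~\ref{cor: 202008241553}, but now the ``scalar'' eigenvalues $2^{-n}$ are replaced by the $2\times 2$ blocks $Q_n$ sitting on the diagonal of the block-upper-triangular matrix for $V_S|_{X_n}$. First I would diagonalize each $Q_n$ individually: its characteristic polynomial is $t^2 - \varphi^{-n-1} t - \varphi^{-n-2}\cdot\varphi^{-n} = t^2 - \varphi^{-n-1}t - \varphi^{-2n-2}$, and I would check by direct substitution that $t = \varphi^{-n}$ and $t = -\varphi^{-n-2}$ are its two roots (using $\varphi^{-1} = \varphi - 1$, equivalently $\varphi^2 = \varphi + 1$). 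Since these two roots are distinct for every $n\ge 0$ (they have opposite signs and are both nonzero), each $Q_n$ is diagonalizable with two one-dimensional eigenspaces.

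Next I would lift this to $X_n$ exactly as in Corollary~\ref{cor: 202008241553}. The matrix of $V_S|_{X_n}$ in the basis $\mathbf{1}_{C[0]}, \mathbf{1}_{C[1]}, \dots, \mathbf{1}_{C[0]}h^n, \mathbf{1}_{C[1]}h^n$ is block-upper-triangular with diagonal blocks $Q_0,\dots,Q_n$, so its eigenvalues (with multiplicity) are precisely the eigenvalues of the $Q_k$, that is $\varphi^{-k}$ and $-\varphi^{-k-2}$ for $k=0,\dots,n$. For a fixed $n$, these $2(n+1)$ values are pairwise distinct: within one block they are distinct by the previous paragraph, and across blocks $k\ne k'$ one checks $\varphi^{-k}\ne\varphi^{-k'}$, $-\varphi^{-k-2}\ne-\varphi^{-k'-2}$, and $\varphi^{-k}\ne -\varphi^{-k'-2}$ (the last because the left side is positive and the right side negative). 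Hence $V_S|_{X_n}$ has $2(n+1)=\dim X_n$ distinct eigenvalues, so it is diagonalizable and every eigenspace in $X_n$ is one-dimensional. Finally, since $X = \varinjlim X_n$ and $V_S X_n \subset X_n$, any eigenvector of $V_S|_X$ lies in some $X_n$, so the eigenvalue list on $X$ is $\{\varphi^{-n}, -\varphi^{-n-2}\}_{n\ge 0}$ and each eigenspace is one-dimensional; distinctness across \emph{all} $n$ follows from the same three inequalities.

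The only mild subtlety — the part I would be most careful about — is the distinctness check across blocks, in particular ruling out coincidences of the form $\varphi^{-k} = \varphi^{-k'}$ or $-\varphi^{-k-2} = -\varphi^{-k'-2}$ (immediate, since $\varphi > 1$) and, more importantly, confirming that a ``$+$'' eigenvalue from one block never equals a ``$-$'' eigenvalue from another; this is automatic from the sign, so no real obstacle arises. Everything else is the routine transfer of the argument of Corollary~\ref{cor: 202008241553} through the block-triangular structure, using that eigenvectors attached to distinct eigenvalues are linearly independent and that a square matrix with $\dim$ distinct eigenvalues is diagonalizable.
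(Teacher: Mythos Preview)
Your proposal is correct and follows essentially the same approach as the paper, which simply refers back to the argument of Corollary~\ref{cor: 202008241553}: exploit the block-upper-triangular form, identify the eigenvalues as those of the diagonal blocks $Q_n$, and use distinctness plus a dimension count to force each eigenspace in $X_n$ (hence in $X$) to be one-dimensional. You have merely made explicit the computations (characteristic polynomial of $Q_n$, verification of the roots via $\varphi^2=\varphi+1$, and the distinctness checks) that the paper leaves implicit.
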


We fix $\Phi_{n,+}$ and $\Phi_{n,-}$ the unique (up to scalar) eigenvectors of $V_S|_X$ corresponding to $\varphi^{-n}$ and $-\varphi^{-n-2}$, respectively.
We assume $\Phi_{0,+}=1$.
We define the dual basis $\{\Phi_{n,\pm}'\}_{n=0}^\infty$, 
which are continuous linear functionals on $X$ such that 
$\langle \Phi_{i,p}' | \Phi_{j,q}\rangle = \delta_{(i,p),(j,q)}$, where $i,j\ge 0$ and $p,q=\pm$.

Then, we have the analytic continuation of the resolvent and the following spectral decompositions of the Perron-Frobenius and Koopman operators via the generalized eigenvalues:
\begin{theorem}
\label{thm: generalized eigenvalues for the one-sided partial 2-shift}\,
\\
(1) The resolvent $(\lambda -V_S)^{-1}$ has an analytic continuation
to the inside of the unit circle as an operator from $X$ into $X'$.
The generalized eigenvalues with respect to the Gelfand triplet $X \subset L^2(\Sigma^A_+, \mu_+) \subset X'$ are given by $\{\varphi^{-n}\}_{n\ge0} \cup \{-\varphi^{-n-2}\}_{n\ge0}$.
\\
(2) $V_{S}$ and $U_S^\times$ have spectral decompositions of the form
\begin{eqnarray*}
V_Sf &=& \sum^\infty_{n=0} \varphi^{-n} \overline{\langle \Phi_{n,+}' \,|\, f \rangle} \Phi_{n,+} - \sum^\infty_{n=0} \varphi^{-(n+2)} \overline{\langle \Phi_{n,-}' \,|\, f \rangle} \Phi_{n,-}, \quad f\in X, \\
U_S^\times \mu &=& \sum^\infty_{n=0} \varphi^{-n} \langle \mu \,|\, \Phi_{n,+} \rangle \Phi_{n,+}' - \sum^\infty_{n=0} \varphi^{-(n+2)} \langle \mu \,|\, \Phi_{n,-} \rangle \Phi_{n,-}', \quad \mu \in X'.
\end{eqnarray*}
\end{theorem}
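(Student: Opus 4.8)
The plan is to mimic the proof of Theorem \ref{thm: generalized spectrum for the one-sided full 2-shift} almost verbatim, now using the block-triangular structure of $V_S|_{X_n}$ recorded in the proposition immediately above. First, for $|\lambda|>1$ and $f,g\in X$ we expand the resolvent as a Neumann series $(\lambda-V_S)^{-1}=\lambda^{-1}\sum_{r\ge0}\lambda^{-r}V_S^r$, which converges in $L^2(\Sigma_+^A,\mu_+)$ because $V_S$ is isometric. The point is then to re-sum this series in the eigenbasis $\{\Phi_{n,+},\Phi_{n,-}\}$ of $V_S|_X$. Writing any $f\in X$ as a finite linear combination $f=\sum_{n=0}^m\left(c_{n,+}\Phi_{n,+}+c_{n,-}\Phi_{n,-}\right)$ (possible by Corollary \ref{cor: gen spec for par shift}, since the eigenvectors of the distinct eigenvalues $\varphi^{-n},-\varphi^{-n-2}$ span each $X_n$), and using $V_S\Phi_{n,+}=\varphi^{-n}\Phi_{n,+}$, $V_S\Phi_{n,-}=-\varphi^{-n-2}\Phi_{n,-}$, the double sum collapses to
\[
\langle(\lambda-V_S)^{-1}f\,|\,g\rangle=\sum_{n=0}^\infty\frac{c_{n,+}\langle\Phi_{n,+}|g\rangle}{\lambda-\varphi^{-n}}+\sum_{n=0}^\infty\frac{c_{n,-}\langle\Phi_{n,-}|g\rangle}{\lambda+\varphi^{-n-2}},
\]
valid first for $|\lambda|>1$ and then, since $g\in X$ is arbitrary, as an identity of $X'$-valued functions
\[
(\lambda-V_S)^{-1}f=\sum_{n=0}^\infty\frac{c_{n,+}}{\lambda-\varphi^{-n}}\Phi_{n,+}+\sum_{n=0}^\infty\frac{c_{n,-}}{\lambda+\varphi^{-n-2}}\Phi_{n,-}.
\]
The right-hand side is a meromorphic $X'$-valued function on all of $\C$ (for fixed $f$ it is in fact a finite sum), so it furnishes the analytic continuation into $|\lambda|<1$; its poles are exactly $\{\varphi^{-n}\}_{n\ge0}\cup\{-\varphi^{-n-2}\}_{n\ge0}$, and the residue at each pole is a nonzero multiple of the corresponding $\Phi_{n,\pm}$, which by the definition recalled in Section 2 is the generalized eigenvector. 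This proves (1).

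For (2), apply $\Phi_{n,+}'$ and $\Phi_{n,-}'$ to the decomposition $f=\sum\left(c_{n,+}\Phi_{n,+}+c_{n,-}\Phi_{n,-}\right)$ to identify the coefficients as $\overline{c_{n,\pm}}=\langle\Phi_{n,\pm}'|f\rangle$; then $V_Sf=\sum\left(\varphi^{-n}c_{n,+}\Phi_{n,+}-\varphi^{-n-2}c_{n,-}\Phi_{n,-}\right)$ is precisely the claimed formula for $V_S$. The formula for $U_S^\times$ follows by dualizing exactly as in Theorem \ref{thm: generalized spectrum for the one-sided full 2-shift}: for $\mu\in X'$, $\langle U_S^\times\mu\,|\,f\rangle=\langle\mu\,|\,U_S^*f\rangle=\langle\mu\,|\,V_Sf\rangle$, and substituting the $V_S$-decomposition and using $\overline{\langle\Phi_{n,\pm}'|f\rangle}$ appropriately gives $\langle U_S^\times\mu\,|\,f\rangle=\big\langle\sum\left(\varphi^{-n}\langle\mu|\Phi_{n,+}\rangle\Phi_{n,+}'-\varphi^{-n-2}\langle\mu|\Phi_{n,-}\rangle\Phi_{n,-}'\right)\,\big|\,f\big\rangle$ for all $f\in X$.

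I do not expect any genuinely hard step; the content is entirely in the structural propositions already proved. The one place to be slightly careful is the interchange of the $r$-sum (Neumann series) and the $n$-sum (expansion of $f$): since $f\in X$ is a \emph{finite} combination of eigenvectors, for fixed $f$ the $n$-sum is finite and the interchange is trivial, and the geometric series $\sum_r(\varphi^n\lambda)^{-r}$, $\sum_r(-\varphi^{n+2}\lambda)^{-r}$ each converge for $|\lambda|>1$ (indeed for $|\lambda|>\varphi^{-n}$, resp. $|\lambda|>\varphi^{-n-2}$). A secondary point worth a sentence is that the generalized eigenvalues $-\varphi^{-n-2}$ do lie strictly inside the unit disk (clear, since $\varphi>1$) and are genuinely new relative to the continuous spectrum on $|z|=1$; and that, as in the $2$-shift case, one should note $\Phi_{0,+}\equiv1$ is the eigenvalue-$1$ eigenvector so that the top eigenvalue of $Q_0$ is $\varphi^{-0}=1$, consistent with $S$ being measure-preserving. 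The analytic-continuation bookkeeping (that the series defines a single-valued meromorphic function, not one on a nontrivial Riemann surface) is immediate here because the spectrum accumulates only at $0$ and the continued resolvent is rational in $\lambda$ on each $X_n$.
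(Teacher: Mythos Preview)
Your proposal is correct and follows essentially the same approach as the paper, which simply states that ``the proof of this theorem is carried out via the same argument as that of Theorem \ref{thm: generalized spectrum for the one-sided full 2-shift}''; you have faithfully filled in those details using the eigenbasis $\{\Phi_{n,\pm}\}$ furnished by Corollary \ref{cor: gen spec for par shift}. One small terminological slip: $V_S$ is the adjoint of the isometry $U_S$, hence a co-isometry rather than an isometry, but what you actually use is $\|V_S\|=1$, which is correct.
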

The proof of this theorem is carried out via the same argument as that of Theorem \ref{thm: generalized spectrum for the one-sided full 2-shift}.
As a corollary, we have an asymptotic behavior of the Perron-Frobenius operator:
\begin{corollary}
For any $f \in X$, we have 
\[V_S^kf \to \int_{\Sigma_+} f \dd\mu_+ \cdot \Phi_{0,+}\]
as $k \to \infty$ ($\Phi_{0,+} \equiv 1$).
\end{corollary}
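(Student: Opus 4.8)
The plan is to mimic the proof of the corollary following Theorem~\ref{thm: generalized spectrum for the one-sided full 2-shift}, now using the spectral data of $V_S|_X$ supplied by Corollary~\ref{cor: gen spec for par shift}. Fix $f\in X$, so that $f\in X_N$ for some $N$. The space $X_N$ has dimension $2(N+1)$, and the block upper-triangular form of $V_S|_{X_N}$ with diagonal blocks $Q_0,\dots,Q_N$ exhibits exactly the $2(N+1)$ eigenvalues $\varphi^{-n}$ and $-\varphi^{-n-2}$ with $0\le n\le N$; these are pairwise distinct, since the first family is positive and strictly decreasing while the second is negative with modulus strictly decreasing to $0$. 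Hence the eigenvectors $\Phi_{n,\pm}$ ($0\le n\le N$) form a basis of $X_N$, and we may write $f=\sum_{n=0}^{N}\bigl(c_{n,+}\Phi_{n,+}+c_{n,-}\Phi_{n,-}\bigr)$ with unique scalars $c_{n,\pm}$; equivalently $\overline{c_{n,\pm}}=\langle\Phi_{n,\pm}'\,|\,f\rangle$.

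Next I apply $V_S^k$, using $V_S\Phi_{n,+}=\varphi^{-n}\Phi_{n,+}$ and $V_S\Phi_{n,-}=-\varphi^{-n-2}\Phi_{n,-}$, to obtain
\[
V_S^k f = c_{0,+}\Phi_{0,+}+\sum_{n=1}^{N}c_{n,+}\varphi^{-nk}\Phi_{n,+}+\sum_{n=0}^{N}c_{n,-}(-1)^k\varphi^{-(n+2)k}\Phi_{n,-}.
\]
Since $\varphi>1$, every coefficient other than the first is $O(\varphi^{-k})$ and the sum is finite, so $V_S^k f\to c_{0,+}\Phi_{0,+}=c_{0,+}$ (the convergence taking place already in $X_N$, hence in $X$ and in $L^2(\Sigma_+^A,\mu_+)$), recalling $\Phi_{0,+}\equiv 1$.

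It remains to identify $c_{0,+}$ with $\int_{\Sigma_+^A}f\,\dd\mu_+$. From the adjoint relation $(V_S g,1)=(g,U_S 1)=(g,1)$ applied with the constant function $1$ we get $\int_{\Sigma_+^A}V_S[g]\,\dd\mu_+=\int_{\Sigma_+^A}g\,\dd\mu_+$ for every $g\in L^2(\Sigma_+^A,\mu_+)$; taking $g$ to be an eigenvector $\Phi$ of $V_S$ with eigenvalue $\lambda$ then forces $(\lambda-1)\int_{\Sigma_+^A}\Phi\,\dd\mu_+=0$. As $\lambda=\varphi^{-n}\neq1$ for $n\ge1$ and $\lambda=-\varphi^{-n-2}$ is negative (hence $\neq1$) for all $n\ge0$, we conclude $\int_{\Sigma_+^A}\Phi_{n,+}\,\dd\mu_+=0$ ($n\ge1$) and $\int_{\Sigma_+^A}\Phi_{n,-}\,\dd\mu_+=0$ ($n\ge0$). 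Since $\mu_+$ is a probability measure and $\Phi_{0,+}\equiv1$, integrating the expansion of $f$ gives $\int_{\Sigma_+^A}f\,\dd\mu_+=c_{0,+}$, which finishes the proof. There is no real obstacle here beyond routine bookkeeping; the only step worth stating explicitly is the linear independence of the whole family $\{\Phi_{n,\pm}\}$ used to expand $f$, which I have reduced to the fact that the positive eigenvalues $\varphi^{-n}$ never meet the negative eigenvalues $-\varphi^{-n-2}$ and that each family is strictly monotone — a point essentially already contained in Corollary~\ref{cor: gen spec for par shift}.
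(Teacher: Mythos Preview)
Your proof is correct and follows essentially the same approach the paper intends: the paper gives no separate proof for this corollary, implicitly referring back to the proof of the analogous corollary after Theorem~\ref{thm: generalized spectrum for the one-sided full 2-shift}, and your argument is precisely the natural adaptation of that proof to the eigenbasis $\{\Phi_{n,\pm}\}$ supplied by Corollary~\ref{cor: gen spec for par shift}. The extra detail you supply on the pairwise distinctness of the eigenvalues (hence the basis property of $\{\Phi_{n,\pm}\}_{0\le n\le N}$ in $X_N$) is a useful clarification not spelled out in the paper.
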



\section{Generalized eigenvalues of a two-sided shift}
In this section, we determine the generalized eigenvalues of the left shift operator $(S_L\omega )_j = \omega _{j+1}$
on the two-sided full-shift space
\begin{equation}
\Sigma = \{ \omega = (\cdots , \omega _{-1}, \omega _0.\, \omega _1, \cdots )\, | \, 
   \omega _i\in \{ 0,1 \} \}.
\label{4-1}
\end{equation}
In this case, $S_L$ is a homeomorphism and its inverse is the right shift $(S_R\omega )_j := \omega _{j-1}$.
The precise statement is included  in Theorem \ref{main thm} under the preparation in the next subsection.


\subsection{Preliminary}
For $\omega = (\omega_i)_{i\in \mathbb{Z}} \in \Sigma$, we denote by $\omega_+$ (resp. $\omega_-$) a half side $(\omega_1,\omega_2,\dots)$ (resp. $(\omega_0, \omega_{-1},\dots)$) of $\omega$.
Note that in (\ref{4-1}), we use dot $(.)$ instead of $(,)$ before $\omega_1$ to distinguish the left and right half sides.
We define
\begin{align}
    \Sigma_{\pm} & := \left\{\omega_{\pm} | \omega \in \Sigma \right\} \cong \{0,1\}^{\mathbb{N}},\\
    \pi_{\pm}&: \Sigma \longrightarrow \Sigma_{\pm}; \omega \mapsto \omega_{\pm}.
\end{align}
Let $\mu_0$ be the measure as in Section \ref{subsec: full 2 shift} : $\mu_0(\{0\})=\mu_0(\{1\})=1/2$.
We define measures $\mu$ and $\mu_{\pm}$ on $\Sigma$ and $\Sigma_{\pm}$ as the product measures of $\mu_0$, respectively.
For simplicity, $L^2 (\Sigma, \mu)$ and $L^2 (\Sigma_{\pm}, \mu_{\pm})$ are 
denoted by $L^2 (\Sigma)$ and $L^2 (\Sigma_{\pm})$, respectively.
We note the following isomorphism:
\begin{align}
     L^2(\Sigma_+)\otimes L^2(\Sigma_-) \cong L^2(\Sigma); f\otimes g \mapsto [\omega\mapsto f(\omega_+)g(\omega_-)],
\end{align}
where the Hilbert tensor product on the left hand side is the completion of the algebraic tensor product of $L^2(\Sigma_+)$ and  $L^2(\Sigma_-)$ with respect to the inner product induced via the correspondence $( v\otimes w, v'\otimes w') := ( v,v') \cdot (w,w')$ for $v,v' \in L^2(\Sigma_+)$ and $w,w' \in L^2(\Sigma_-)$.
For simplicity, we denote the Koopman operators of the left and right shifts by $U_L$ and $U_R$ 
\begin{eqnarray*}
& & (U_L f)(\cdots , \omega _{-1}, \omega _0. \omega _1,\cdots ) = f(\cdots ,\omega _0, \omega _1.\, \omega _2, \cdots ) \\
& & (U_R f)(\cdots , \omega _{-1}, \omega _0. \omega _1,\cdots ) = f(\cdots ,\omega _{-2}, \omega _{-1}.\, \omega _0, \cdots ),
\end{eqnarray*}
respectively.
The Perron-Frobenius operators of them are the adjoint denoted by $V_L = U_L^*$ and $V_R= U_R^*$.
These operators are unitary, and satisfy $V_L=U_R$ and $V_R=U_L$.
Therefore, the action of $V_L$ is the right shift
\begin{equation}
(V_Lf)(\omega ) = f(\cdots ,\omega _{-2}, \omega _{-1}.\, \omega _0,\cdots ).
\label{PF_two_side}
\end{equation}
We also define $U_{L+}$ (resp. $U_{R-}$) as the Koopman operator on the left (resp. right) shift map on $\Sigma_+$ (resp. $\Sigma_-$) and its adjoint is denote by $V_{L+}$ (resp. $V_{R-}$).
It is easy to verify that
\begin{eqnarray*}
U_{L} \circ \pi_+^* = \pi^*_+ \circ U_{L+}, \quad U_{R} \circ \pi_-^* = \pi^*_- \circ U_{R-}.
\end{eqnarray*}
Let $h: \Sigma_+\rightarrow [0,1] \subset \mathbb{C}$ be the continuous function defined in (\ref{eq: def of h}).
Let 
\begin{align}
    P_M(\Sigma_+) &:= \bigoplus_{i=0}^M \mathbb{C} h^i,\\
    P(\Sigma_+) &:= \lim_{\underset{M}{\longrightarrow}} P_M(\Sigma_+) 
\end{align}
be topological vector spaces, which coincide with $X_M$ and $X$ of (\ref{eq: def of X_n}) and (\ref{eq: def of X}), respectively.
As mentioned, $P(\Sigma_+)$ is a Montel space, in particular complete and barrelled. 
There is a natural homeomorphism $i: \Sigma _- \to \Sigma _+$: 
\begin{eqnarray*}
\omega _{-} = (\cdots , \omega _{-1}, \omega _{0}) \mapsto \omega _+ = (\omega _0, \omega _{-1}, \cdots ).
\end{eqnarray*}
We define the space $P(\Sigma_-) := i^*P(\Sigma_+ )$ by the pullback.

Let $\{ \Phi_m\}_{m=0}^\infty \subset P(\Sigma_+)$ be the generalized eigenvectors of the one-sided subshift associated with the generalized eigenvalues $\{ 2^{-m}\}^\infty_{m=0}$ defined in the following paragraph of Corollary \ref{cor: 202008241553} and 
$\{\Phi'_m\}_{m=0}^\infty \subset P(\Sigma_+)'$ the dual basis of $\Phi_m$'s
satisfying $\langle \Phi'_m | \Phi_n\rangle=\delta_{m,n}$.
Further, we define $\Psi_m = i^* \Phi_m \in P (\Sigma_-) $ and its dual element $\Psi_m' \in P (\Sigma_-)'$ satisfying $\langle \Psi'_m | \Psi_n\rangle=\delta_{m,n}$.

\begin{lemma}
\label{lem: eigen function formula}
The following identities hold:
\begin{align*}
    V_{L+}\Phi_m &= 2^{-m}\Phi_m,\\
    V_{R-}\Psi_m &= 2^{-m}\Psi_m,\\
    U^{\times}_{L+}\Phi'_m &= 2^{-m}\Phi'_m,\\
    U^{\times}_{R-}\Psi'_m &= 2^{-m}\Psi'_m.
\end{align*}
\end{lemma}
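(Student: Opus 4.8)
The plan is to observe that the four identities are, in pairs, the same statement read through the homeomorphism $i:\Sigma_-\to\Sigma_+$ and through duality. The first identity $V_{L+}\Phi_m=2^{-m}\Phi_m$ is nothing but Corollary \ref{cor: 202008241553} together with the normalization chosen in the paragraph following it: by construction $\Phi_m\in X_m=P_m(\Sigma_+)$ is the monic eigenvector of $V_{L+}=V_S$ on $X=P(\Sigma_+)$ for the eigenvalue $2^{-m}$. So that line requires no work beyond citing what has already been established for the one-sided full $2$-shift.

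First I would transport this to $\Sigma_-$. The map $i:\Sigma_-\to\Sigma_+$ conjugates the right shift on $\Sigma_-$ to the left shift on $\Sigma_+$: reading off the definitions, $i\circ S_{R,-} = S_{L,+}\circ i$ on the relevant (co-)countable set, hence $U_{R-}\circ i^* = i^*\circ U_{L+}$ as operators, and therefore, taking adjoints with respect to the product measures $\mu_\pm$ (which correspond under $i$), $V_{R-}\circ i^* = i^*\circ V_{L+}$. Applying this to $\Phi_m$ and using $\Psi_m:=i^*\Phi_m$ gives $V_{R-}\Psi_m = i^*V_{L+}\Phi_m = 2^{-m}i^*\Phi_m = 2^{-m}\Psi_m$, which is the second identity. (One should note in passing that $i^*$ carries $P(\Sigma_+)$ isomorphically onto $P(\Sigma_-)$, so $\Psi_m$ indeed lies in the right test space and is again monic-type, characterized uniquely up to scalar as the $2^{-m}$-eigenvector there.)

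Next I would dualize. Since $V_{L+}=U_{L+}^*$ and $U_{L+}^*$ is continuous on $X=P(\Sigma_+)$ (Proposition \ref{prop:2020-08-18-1323}), the dual operator $U_{L+}^\times:P(\Sigma_+)'\to P(\Sigma_+)'$ is defined by $\langle U_{L+}^\times\mu\mid f\rangle = \langle\mu\mid U_{L+}^*f\rangle = \langle\mu\mid V_{L+}f\rangle$. Testing $U_{L+}^\times\Phi_m'$ against the basis $\{\Phi_n\}$: $\langle U_{L+}^\times\Phi_m'\mid\Phi_n\rangle = \langle\Phi_m'\mid V_{L+}\Phi_n\rangle = 2^{-n}\langle\Phi_m'\mid\Phi_n\rangle = 2^{-n}\delta_{m,n} = 2^{-m}\delta_{m,n} = \langle 2^{-m}\Phi_m'\mid\Phi_n\rangle$. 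Because the $\Phi_n$ span $X$, this forces $U_{L+}^\times\Phi_m'=2^{-m}\Phi_m'$, the third identity; equivalently it is (\ref{thm3-6-2}) of Theorem \ref{thm: generalized spectrum for the one-sided full 2-shift} read on the individual dual basis vector. The fourth identity follows by combining the previous two moves: either dualize $V_{R-}\Psi_n=2^{-n}\Psi_n$ exactly as above using $V_{R-}=U_{R-}^*$ and $\langle\Psi_m'\mid\Psi_n\rangle=\delta_{m,n}$, or transport $U_{L+}^\times\Phi_m'=2^{-m}\Phi_m'$ along the dual map $(i^*)':P(\Sigma_-)'\to P(\Sigma_+)'$, noting $\Psi_m' = ((i^*)')^{-1}\Phi_m'$ and $U_{R-}^\times\circ(i^*)' = (i^*)'\circ U_{L+}^\times$, which is the transpose of the intertwining $U_{R-}i^*=i^*U_{L+}$.

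The only genuinely non-formal point — the "obstacle," though a mild one — is bookkeeping the conjugacies correctly on the measure-theoretic level: one must check that $i$ is measure-preserving (it is, since it merely relabels the index set of a product of identical factors $\mu_0$), and that $i$ intertwines $S_{R,-}$ with $S_{L,+}$ rather than with its inverse (a quick check of indices: $(S_{R,-}\omega_-)$ drops $\omega_0$, and under $i$ the leading symbol of $\omega_+=i(\omega_-)$ is exactly $\omega_0$, so $i(S_{R,-}\omega_-) = S_{L,+}(i\,\omega_-)$). Once these two sign/index conventions are pinned down, every line above is a one-step computation, and the lemma follows. I would present the proof in this order: cite Corollary \ref{cor: 202008241553} for line one; establish the intertwiner $V_{R-}i^* = i^*V_{L+}$ and deduce line two; establish $U_{L+}^\times\Phi_m'=2^{-m}\Phi_m'$ by pairing against $\{\Phi_n\}$ for line three; and obtain line four by the dual of the intertwiner (or by repeating the pairing argument for $\Psi_m',\Psi_n$).
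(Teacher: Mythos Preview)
Your proposal is correct and follows essentially the same approach as the paper: the first two identities are declared to hold ``by the definition (Section 3.1),'' and the third is proved by pairing $U_{L+}^\times\Phi_m'$ against an arbitrary $f=\sum_{i=0}^n c_i\Phi_i\in P(\Sigma_+)$ and using $V_{L+}\Phi_i=2^{-i}\Phi_i$, exactly your computation against the spanning set $\{\Phi_n\}$; the fourth is then dismissed with ``proved in the same way.'' Your write-up is in fact more careful than the paper's, since you make explicit the intertwining $V_{R-}\,i^*=i^*\,V_{L+}$ (with the index check for $i\circ S_{R,-}=S_{L,+}\circ i$ and the measure-preserving property of $i$) that the paper leaves implicit in the phrase ``by the definition.''
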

\begin{proof}
The first and second equalities follow by the definition (Section 3.1).
We prove the third equality for the dual operator $U^{\times}_{L+}\Phi'_m = 2^{-m}\Phi'_m$.
The last formula is proved in the same way.
In fact, for arbitrary $f \in P(\Sigma_+)$, put $f=\sum_{i=0}^n c_i\Phi_i$.
Noting the definition of the dual operator given in Section 2, we have
\begin{align*}
    \langle U^{\times}_{L+}\Phi'_m~|~f\rangle 
    &= \langle \Phi'_m~|~ U^*_{L+}f\rangle \\
    &= \langle \Phi'_m ~|~ V_{L+}\sum_{i=0}^n c_i \Phi_i\rangle \\
    &= \langle \Phi'_m ~|~ \sum_{i=0}^n c_i2^{-i}\Phi_i\rangle \\
    &=2^{-m}\langle \Phi'_m~|~f\rangle,
\end{align*}
which gives the desired formula.
\end{proof}

As in Section 3, we have the Gelfand triplets
\begin{eqnarray}
P(\Sigma_+ ) \subset L^2(\Sigma_+ ) \subset P(\Sigma_+ )', \quad 
P(\Sigma_- ) \subset L^2(\Sigma_- ) \subset P(\Sigma_- )'.
\label{4-7}
\end{eqnarray}

\begin{lemma}\label{lemma4-2}
Any $f\in L^2(\Sigma_+ )$ is expanded as
\begin{equation}
f = \sum^\infty_{m=0} c_m \Phi_m', \quad c_m = \langle f~|~\Phi_m\rangle,
\end{equation}
with respect to the weak * topology on $P(\Sigma_+ )'$.
We also obtain a similar statement: any element of $L^2(\Sigma_-)$ is expressed as an infinite sum of elements in $P(\Sigma_-)'$ with respect to the weak * topology.
\end{lemma}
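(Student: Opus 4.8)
The statement claims that every $f \in L^2(\Sigma_+)$ can be written as $f = \sum_{m} c_m \Phi_m'$ with $c_m = \langle f \mid \Phi_m\rangle$, where the series converges in the weak $*$ topology of $P(\Sigma_+)'$. The plan is to verify two things: first, that the candidate coefficients $c_m = \langle f \mid \Phi_m\rangle$ are well-defined (this is immediate, since $\Phi_m \in P(\Sigma_+) \subset L^2(\Sigma_+)$, so $\langle f \mid \Phi_m\rangle = (f, \Phi_m)_{L^2}$ makes sense using the compatibility $\mathcal{H} \simeq \mathcal{H}'$ built into the Gelfand triplet); and second, that $\langle f \mid g\rangle = \sum_m c_m \langle \Phi_m' \mid g\rangle$ for every test function $g \in P(\Sigma_+)$, which is exactly weak $*$ convergence of the partial sums to $f$ regarded as an element of $P(\Sigma_+)'$.

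\textbf{Reduction to polynomials.} For a fixed $g \in P(\Sigma_+) = \mathbb{C}[h]$, write $g = \sum_{j=0}^{N} b_j \Phi_j$ in the eigenbasis $\{\Phi_j\}$, a \emph{finite} sum. By the duality $\langle \Phi_m' \mid \Phi_j\rangle = \delta_{m,j}$ we get $\langle \Phi_m' \mid g\rangle = \overline{b_m}$ for $m \le N$ and $0$ otherwise, so the right-hand side $\sum_m c_m \langle \Phi_m' \mid g\rangle$ is the finite sum $\sum_{m=0}^{N} c_m \overline{b_m}$. On the other hand, $\langle f \mid g\rangle = (f, g)_{L^2} = \sum_{m=0}^{N} \overline{b_m}\, (f, \Phi_m)_{L^2} = \sum_{m=0}^N \overline{b_m}\, c_m$. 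These two expressions coincide, so the identity holds for every $g \in P(\Sigma_+)$, which is precisely the assertion that $f = \sum_m c_m \Phi_m'$ weakly in $P(\Sigma_+)'$. The statement for $L^2(\Sigma_-)$ follows verbatim by applying the homeomorphism $i : \Sigma_- \to \Sigma_+$ and the pullback, replacing $\Phi_m, \Phi_m'$ by $\Psi_m = i^*\Phi_m$ and $\Psi_m'$.

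\textbf{Main subtlety.} The only genuine point to be careful about is that weak $*$ convergence in $P(\Sigma_+)'$ requires testing against \emph{every} $g \in P(\Sigma_+)$, and one must be sure that each such $g$ is a finite linear combination of the $\Phi_j$; this is guaranteed because $\{\Phi_0, \dots, \Phi_n\}$ is a basis of $X_n = P_n(\Sigma_+)$ (Corollary~\ref{cor: 202008241553} and Proposition~\ref{prop:2020-08-18-1323}), so any polynomial of degree $\le n$ lies in the span of finitely many eigenvectors. No convergence estimate is needed on the test-function side precisely because the sums against a fixed $g$ terminate. Thus the proof is essentially a two-line unwinding of the definitions; I expect no real obstacle, only the need to state cleanly that $\langle f \mid \Phi_m\rangle$ means the $L^2$ inner product and that the partial-sum identity against finitely supported $g$ is what weak $*$ convergence unwinds to.
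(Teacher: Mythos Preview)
Your proposal is correct and follows essentially the same route as the paper's own proof: expand an arbitrary test function $g\in P(\Sigma_+)$ as a finite linear combination of the $\Phi_j$, use the duality $\langle \Phi_m'\mid \Phi_j\rangle=\delta_{m,j}$ to reduce the series $\sum_m c_m\langle \Phi_m'\mid g\rangle$ to a finite sum, and match it with $\langle f\mid g\rangle=(f,g)_{L^2}$. The only difference is cosmetic: you are slightly more explicit about why $c_m$ is well-defined and about what weak~$*$ convergence unwinds to, whereas the paper compresses this into two lines.
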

\begin{proof}
It is obvious that $c_m$ is given as above if the right hand side exists.
For the convergence of the right hand side in $P(\Sigma_+)'$, it is sufficient to show that $\langle f \,|\, g  \rangle$ exists for any $g\in P(\Sigma _+)$.
By the definition of $P(\Sigma _+)$, there is a natural number $M$ such that
$g = \sum^M_{n=0}d_n \Phi_n$.
Hence, 
\begin{eqnarray*}
\langle f \,|\, g  \rangle = \sum^\infty_{m=0}\sum^M_{n=0} c_m \overline{d_n} \langle \Phi'_m \,|\, \Phi_n  \rangle
 = \sum^M_{n=0}c_n\overline{d_n}< \infty.
\end{eqnarray*}
\end{proof}

Let $P(\Sigma_+)\otimes P(\Sigma_-)'$ and $P(\Sigma_+)' \otimes P(\Sigma_-)$ be 
algebraic tensor products.
We define the bilinear form on them by
\begin{align}
    \langle f_+ \otimes \mu_- | \mu_+\otimes f_-\rangle 
    :=\langle \mu_- | f_-\rangle \cdot \overline{\langle \mu_+ | f_+\rangle}, \label{the bilinear form}
\end{align}
for $ f_+ \otimes \mu_- \in P(\Sigma_+)\otimes P(\Sigma_-)'$
and $\mu_+\otimes f_- \in P(\Sigma_+)' \otimes P(\Sigma_-)$.
Then, we have the following proposition:
\noindent \begin{proposition}\, 

\label{prop: 04261347}
\begin{enumerate}
    \item The bilinear form (\ref{the bilinear form}) is compatible with the inner product on $L^2(\Sigma_+)\otimes L^2(\Sigma_-)$, namely, for $f_{\pm} \in P(\Sigma_{\pm})$ and $\mu_{\pm} \in L^2(\Sigma_{\pm}) \subset P(\Sigma_{\pm})'$, we have
    \[\langle f_+\otimes \mu_- | \mu_+ \otimes f_-\rangle = ( f_+\otimes \mu_- , \mu_+ \otimes f_-)_{L^2(\Sigma)}.\]
    \label{compatibility of bilinear form}
    \item $\langle \Phi_m \otimes \Psi_n' | \Phi'_{m'} \otimes \Psi_{n'} \rangle = \delta_{m,m'}\delta_{n,n'}$.
    \label{orthogonality}
\end{enumerate}
\end{proposition}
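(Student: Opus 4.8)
The plan is to verify both claims by direct computation, reducing everything to the defining property $\langle \mu_\pm \mid f_\pm\rangle = (\mu_\pm, f_\pm)$ on each one-sided Gelfand triplet (which holds because the pairing restricted to $L^2(\Sigma_\pm)$ is compatible with the inner product, as recalled in Section 2), together with the isomorphism $L^2(\Sigma_+)\otimes L^2(\Sigma_-)\cong L^2(\Sigma)$ given by $f\otimes g\mapsto[\omega\mapsto f(\omega_+)g(\omega_-)]$ and the prescription $(v\otimes w, v'\otimes w')=(v,v')\cdot(w,w')$.

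For part (1), I would take $f_\pm\in P(\Sigma_\pm)$ and $\mu_\pm\in L^2(\Sigma_\pm)$, so that $f_+\otimes\mu_-$ and $\mu_+\otimes f_-$ are honest elements of $L^2(\Sigma_+)\otimes L^2(\Sigma_-)\cong L^2(\Sigma)$. Then the right-hand side is
\begin{align*}
(f_+\otimes\mu_- , \mu_+\otimes f_-)_{L^2(\Sigma)} = (f_+,\mu_+)_{L^2(\Sigma_+)}\cdot(\mu_-,f_-)_{L^2(\Sigma_-)}.
\end{align*}
On the other side, since $\mu_\pm\in L^2(\Sigma_\pm)\subset P(\Sigma_\pm)'$, the compatibility of the pairing with the inner product on each one-sided triplet gives $\langle\mu_+\mid f_+\rangle = (\mu_+,f_+)_{L^2(\Sigma_+)}$, hence $\overline{\langle\mu_+\mid f_+\rangle}=(f_+,\mu_+)_{L^2(\Sigma_+)}$, and $\langle\mu_-\mid f_-\rangle=(\mu_-,f_-)_{L^2(\Sigma_-)}$. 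Plugging these into the definition \eqref{the bilinear form} yields exactly the displayed product, so the two sides agree. The only point to be careful about is the placement of the complex conjugate in \eqref{the bilinear form}: the bilinear form must be anti-linear in the first slot to match the inner product convention on $L^2(\Sigma)$, and the definition has been set up precisely so that the conjugate falls on the $\Sigma_+$-factor; tracking this bookkeeping correctly is the main (very mild) obstacle in part (1).

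For part (2), I would simply substitute $f_+=\Phi_m\in P(\Sigma_+)$, $\mu_-=\Psi_n'\in P(\Sigma_-)'$, $\mu_+=\Phi_{m'}'\in P(\Sigma_+)'$, $f_-=\Psi_{n'}\in P(\Sigma_-)$ into \eqref{the bilinear form}:
\begin{align*}
\langle \Phi_m\otimes\Psi_n' \mid \Phi_{m'}'\otimes\Psi_{n'}\rangle = \langle \Psi_n' \mid \Psi_{n'}\rangle\cdot\overline{\langle \Phi_{m'}' \mid \Phi_m\rangle} = \delta_{n,n'}\cdot\overline{\delta_{m',m}} = \delta_{m,m'}\,\delta_{n,n'},
\end{align*}
using that $\{\Phi'_k\}$ is dual to $\{\Phi_k\}$ (so $\langle\Phi'_{m'}\mid\Phi_m\rangle=\delta_{m',m}$) and that $\{\Psi'_k\}$ is dual to $\{\Psi_k\}$ (so $\langle\Psi'_n\mid\Psi_{n'}\rangle=\delta_{n,n'}$), both of which are real so the conjugation is harmless. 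This establishes the orthogonality relation and completes the proof.
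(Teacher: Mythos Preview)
Your proof is correct and follows essentially the same approach as the paper: for part (1) you unwind the tensor inner product and the one-sided compatibility $\langle\mu\mid f\rangle=(\mu,f)$ (the paper does the same computation from the other side via integrals), and for part (2) you spell out explicitly what the paper dismisses as ``follows from the definition.''
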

\begin{proof}
As for (\ref{compatibility of bilinear form}), 
since $\langle \mu \,|\, f \rangle = (\mu, f)_{L^2}$ when $\mu \in L^2$, we have
\begin{align*}
    (\text{the left hand side}) &= \int \mu_- \overline{f_-} \dd\mu_- \cdot \int \overline{\mu_+} f_+ \dd\mu_+ \\
    &= \int f_+(\omega_+)\mu_-(\omega_-)\overline{f_-(\omega_-)\mu_+(\omega_+)} \dd\mu_+ \dd\mu_-\\
    &= (f_+\mu_-, f_-\mu_+)_{L^2(\Sigma)}\\
    &= (\text{the right hand side}).
\end{align*}
The second statement follows from the definition.
\end{proof}

\begin{proposition}
Let $E$ be a Fr\'echet space and $F_M := E\otimes P_M(\Sigma_-)$.
Then, the strict inductive limit 
\[F:=\lim_{\underset{M}{\longrightarrow}} F_M = E\otimes P(\Sigma_-) \] 
is a LF-space \cite{Tre}, which is a complete barrelled space.
\end{proposition}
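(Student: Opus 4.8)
The plan is to verify that $F := \varinjlim_M F_M$ with $F_M = E \otimes P_M(\Sigma_-)$ is a strict inductive limit of Fréchet spaces, and then invoke the standard structure theory of LF-spaces (see Tr\`eves \cite{Tre}). First I would check that each $F_M$ is a Fréchet space: since $E$ is Fréchet and $P_M(\Sigma_-)$ is finite-dimensional (it is the pullback under $i^*$ of $P_M(\Sigma_+) = \bigoplus_{i=0}^M \mathbb{C}h^i$, hence of dimension $M+1$), the algebraic tensor product $E \otimes P_M(\Sigma_-)$ is just a finite direct sum $E^{\oplus(M+1)}$, which carries a canonical Fréchet topology (equivalently, the projective tensor product topology, which for a finite-dimensional factor coincides with the injective one and with the direct-sum topology). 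So $F_M$ is Fréchet.

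Next I would check that the inclusions $F_M \hookrightarrow F_{M+1}$ are topological embeddings onto closed subspaces, which is what "strict" inductive limit requires. Identifying $F_M \cong E^{\oplus(M+1)}$ and $F_{M+1} \cong E^{\oplus(M+2)}$ compatibly with the basis $1, h, \dots, h^{M+1}$ of $P_{M+1}(\Sigma_-)$ (pulled back), the inclusion is just $(v_0,\dots,v_M) \mapsto (v_0,\dots,v_M,0)$, which is visibly a homeomorphism onto the closed subspace $E^{\oplus(M+1)} \times \{0\}$. Therefore $F = \varinjlim_M F_M$ is by definition a strict inductive limit of an increasing sequence of Fréchet spaces, i.e.\ an LF-space. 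The identification $F = E \otimes P(\Sigma_-)$ as vector spaces is immediate since $P(\Sigma_-) = \varinjlim_M P_M(\Sigma_-) = \bigcup_M P_M(\Sigma_-)$ and tensor product commutes with this union on the algebraic level.

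Finally, for the two asserted properties: completeness and barrelledness of LF-spaces are classical. A strict inductive limit of a sequence of complete spaces is complete (Tr\`eves \cite{Tre}, and each Fréchet space is complete), and an inductive limit of barrelled spaces is barrelled (each Fréchet space is barrelled, being a Baire space). Thus $F$ is a complete barrelled space, as claimed.

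The only point requiring a little care — and the one I would single out as the main (though still routine) obstacle — is confirming that the topology on $F_M = E \otimes P_M(\Sigma_-)$ is unambiguous and that the bonding maps are \emph{strict} embeddings: one must make sure the finite-dimensionality of $P_M(\Sigma_-)$ is genuinely used so that all reasonable tensor-product topologies agree and the subspace topology induced from $F_{M+1}$ really is the Fréchet topology of $F_M$ (this can fail for inductive limits that are not strict). Once the finite-dimensional reduction $F_M \cong E^{\oplus(M+1)}$ is in place, everything else is a direct citation of the LF-space theory.
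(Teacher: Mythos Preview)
Your proposal is correct and, in fact, more detailed than what the paper does: the paper states this proposition without proof, relying entirely on the citation to Tr\`eves for the LF-space structure theory. Your verification that each $F_M \cong E^{\oplus(M+1)}$ is Fr\'echet, that the bonding maps are strict closed embeddings, and that completeness and barrelledness then follow from standard results, spells out exactly the reasoning the paper leaves implicit.
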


In this paper, we consider $E = L^2(\Sigma_{\pm})$ as follows.
\begin{definition}
Define
\[ X_+ := L^2(\Sigma_+)\otimes P(\Sigma_-), \quad X_- := P(\Sigma_+) \otimes L^2(\Sigma_-). \]
Then, we have the Gelfand triplets:
\begin{align*}
    X_+ \subset L^2(\Sigma) \subset X_+',\\
    X_- \subset L^2(\Sigma) \subset X_-'.
\end{align*}
Indeed, $L^2 (\Sigma) \simeq L^2(\Sigma_+) \otimes L^2(\Sigma_-)$ and
the triplet (\ref{4-7}) show that $X_{\pm}$ is dense in $L^2 (\Sigma)$
and its topology is stronger than that of $L^2 (\Sigma)$.
For $f= f_+\otimes f_- \in L^2(\Sigma_+)\otimes P(\Sigma_-) = X_+$, we have
$i^*f_+ \in L^2(\Sigma _-)$ and $(i^*)^{-1} f_- \in P(\Sigma_+)$,
so that $(i^*)^{-1} f_- \otimes i^*f_+ \in X_-$.
Therefore, $X_+ \simeq X_-$ and the following Gelfand triplets are also considered, which will play an important role for the analytic continuation of the resolvent.
\begin{align}
    X_+ \subset L^2(\Sigma) \subset X_-', \label{triplet +-}\\
    X_- \subset L^2(\Sigma) \subset X_+'. \label{triplet -+}
\end{align}
\end{definition}
\begin{lemma}
\label{lemma4-6}
The following inclusions hold:
\begin{equation}
P(\Sigma_+)\otimes P(\Sigma_-)' \subset X'_+, \quad P(\Sigma_+)' \otimes P(\Sigma_-) \subset X_-'.
\label{an inclusion 1}
\end{equation}
\end{lemma}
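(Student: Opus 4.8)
The plan is to prove the two inclusions in \eqref{an inclusion 1} separately, since they are symmetric to each other under the homeomorphism $i : \Sigma_- \to \Sigma_+$; I will describe the first, $P(\Sigma_+) \otimes P(\Sigma_-)' \subset X_+'$, with the understanding that the second follows by interchanging the roles of $\Sigma_+$ and $\Sigma_-$. Recall $X_+ = L^2(\Sigma_+) \otimes P(\Sigma_-)$, and the question is to realize an arbitrary algebraic tensor $\phi_+ \otimes \nu_-$, with $\phi_+ \in P(\Sigma_+) \subset L^2(\Sigma_+)$ and $\nu_- \in P(\Sigma_-)'$, as a continuous anti-linear functional on $X_+$. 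Since $X_+$ is the strict inductive limit of the Fréchet spaces $F_M := L^2(\Sigma_+) \otimes P_M(\Sigma_-)$, by the universal property of the inductive limit it suffices to exhibit, for each $M$, a continuous anti-linear functional on $F_M$ and to check that these are consistent as $M$ grows. So the core of the argument reduces to a statement about the Fréchet (indeed Hilbert-times-finite-dimensional) spaces $F_M$.

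First I would define the pairing on elementary tensors: for $g = \sum_{j} a_j \otimes b_j \in L^2(\Sigma_+) \otimes P_M(\Sigma_-)$ with $a_j \in L^2(\Sigma_+)$ and $b_j \in P_M(\Sigma_-)$, set
\[
\langle \phi_+ \otimes \nu_- \,|\, g \rangle := \sum_j \overline{(a_j, \phi_+)_{L^2(\Sigma_+)}} \cdot \langle \nu_- \,|\, b_j \rangle,
\]
which is well-defined (independent of the representation of $g$) because the right-hand side is the value of the algebraic tensor product of the two functionals $a \mapsto \overline{(a,\phi_+)}$ on $L^2(\Sigma_+)$ and $\nu_-$ on $P_M(\Sigma_-)$. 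Then I would verify anti-linearity in $g$, which is immediate, and continuity on $F_M$: using a basis $\Psi_0, \dots, \Psi_M$ of $P_M(\Sigma_-)$ one writes any $g \in F_M$ uniquely as $g = \sum_{n=0}^M g_n \otimes \Psi_n$ with $g_n \in L^2(\Sigma_+)$, the coordinate maps $g \mapsto g_n$ being continuous by definition of the (projective = product, since one factor is finite-dimensional) topology on $F_M$; then $\langle \phi_+ \otimes \nu_- \,|\, g\rangle = \sum_{n=0}^M \overline{(g_n,\phi_+)}\,\langle \nu_- \,|\, \Psi_n\rangle$ is a finite linear combination of continuous functionals, hence continuous. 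Consistency across $M$ is clear because $F_M \hookrightarrow F_{M+1}$ is the inclusion and the defining formula does not depend on $M$. Therefore $\phi_+ \otimes \nu_-$ extends to an element of $X_+' = (\varinjlim F_M)'$, and by linearity the whole algebraic tensor product $P(\Sigma_+) \otimes P(\Sigma_-)'$ embeds into $X_+'$. For the injectivity of this embedding (so that "$\subset$" is literally an inclusion) I would note that $P(\Sigma_+)$ is dense in $L^2(\Sigma_+)$, so $\phi_+ \mapsto [\,a \mapsto \overline{(a,\phi_+)}\,]$ is injective on $L^2(\Sigma_+)$-functionals, and combine this with the definition of $P(\Sigma_-)'$ as a genuine dual; an elementary argument with the dual bases $\Phi_m'$ and $\Psi_n$ (Proposition \ref{prop: 04261347}(\ref{orthogonality}) and Lemma \ref{lemma4-2}) pins down that no nonzero element of the algebraic tensor product is sent to the zero functional.

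The second inclusion $P(\Sigma_+)' \otimes P(\Sigma_-) \subset X_-'$ is obtained by the mirror-image argument: now $X_- = P(\Sigma_+) \otimes L^2(\Sigma_-)$ is the strict inductive limit of $G_M := P_M(\Sigma_+) \otimes L^2(\Sigma_-)$, and for $\mu_+ \in P(\Sigma_+)'$, $\psi_- \in P(\Sigma_-) \subset L^2(\Sigma_-)$ one sets $\langle \mu_+ \otimes \psi_- \,|\, g\rangle := \sum_j \langle \mu_+ \,|\, a_j\rangle \cdot \overline{(b_j, \psi_-)_{L^2(\Sigma_-)}}$ for $g = \sum_j a_j \otimes b_j \in G_M$, and runs the same well-definedness/continuity/consistency/injectivity checks. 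I expect no serious obstacle here; the only point requiring a little care is making sure that the bilinear form \eqref{the bilinear form} used elsewhere in the paper agrees with the pairings defined above on the common domain, i.e. that the identification $X_+' \ni (\text{image of } P(\Sigma_+)\otimes P(\Sigma_-)')$ is compatible with \eqref{the bilinear form} — but this is exactly the formula I wrote down, so it is a matter of matching conventions rather than a genuine difficulty. The mildly delicate bookkeeping is the interplay of the two Gelfand triplets \eqref{triplet +-}–\eqref{triplet -+} coming from $X_+ \simeq X_-$; I would state explicitly which dual is meant at each step so that the inclusions land in $X_+'$ and $X_-'$ as claimed rather than in the (a priori different, but isomorphic) other dual.
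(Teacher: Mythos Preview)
Your proposal is correct and follows essentially the same approach as the paper: define the pairing on elementary tensors via the bilinear form \eqref{the bilinear form}, and verify continuity on $X_+$ using its inductive limit structure. Your version is considerably more detailed than the paper's (which simply notes that the pairing tends to zero when either factor does and invokes the inductive limit topology without further comment), and your added checks of well-definedness, consistency across $M$, and injectivity are welcome refinements the paper omits.
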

\begin{proof}
We show the first one.
Let $f_+ \oplus \mu_- \in P(\Sigma_+) \oplus P(\Sigma_-)'$
and $\mu_+ \oplus f_- \in L^2(\Sigma_+) \oplus P(\Sigma_-) = X_+$.
We have
\[\langle f_+ \otimes \mu_-\,|\,\mu_+ \otimes f_- \rangle = \langle \mu_-\,|\,f_-\rangle \cdot \overline{(\mu_+, f_+)}_{L^2(\Sigma_+)}.\]
By the definition of the inductive limit topology of $X_+ = L^2(\Sigma_+) \otimes P(\Sigma_-)$, it is easy to verify that the right hand side tends to zero
if $\mu_+ \to 0$ in $L^2 (\Sigma_+)$ or $f_- \to 0$ in $P(\Sigma_-)$.
This implies that the left hand side is continuous with respect to 
$\mu_+ \otimes f_- \in X_+$, which proves that $ f_+ \otimes \mu_- \in X_+'$.
\end{proof}

\begin{proposition}\label{prop4-7}
For $f\in X_-$ and $g \in X_+$, we have expansions:
\begin{align}
    f &= \sum_{m,n=0}^\infty c_{m,n}\Phi_m \otimes \Psi'_n, \quad \text{in} \,\, X_+' \label{expansion for X-}\\
    g &= \sum_{m,n=0}^\infty d_{m,n} \Phi'_m \otimes \Psi_n,\quad \text{in} \,\, X_
    -' \label{expansion for X+}.
\end{align}
where $\overline{c_{m,n}} = \langle \Phi'_m \otimes \Psi_n | f\rangle$ and $\overline{d_{m,n}}=\langle \Phi_m \otimes \Psi'_n | g\rangle$, and
 the convergence of each series is in the sense of the weak * topology
 on $X_+'$ for (\ref{expansion for X-}) and $X_-'$ for (\ref{expansion for X+}). 
 \end{proposition}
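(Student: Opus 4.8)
The plan is to reduce the two expansions to the one-sided expansions already established in Lemma \ref{lemma4-2}, using the tensor product structure $X_- = P(\Sigma_+) \otimes L^2(\Sigma_-)$ and $X_+ = L^2(\Sigma_+) \otimes P(\Sigma_-)$ together with the inclusions of Lemma \ref{lemma4-6}. I will prove \eqref{expansion for X-}; the expansion \eqref{expansion for X+} follows by the symmetric argument (or by applying the isomorphism $i^*$ that swaps the roles of $\Sigma_+$ and $\Sigma_-$). First I would reduce to the case of a simple tensor: a general $f \in X_-$ is a finite sum $f = \sum_k u_k \otimes v_k$ with $u_k \in P(\Sigma_+)$ and $v_k \in L^2(\Sigma_-)$, so by linearity and the fact that each expansion we produce converges in the weak * topology of $X_+'$, it suffices to treat $f = u \otimes v$.

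Next, for $f = u \otimes v$ with $u \in P(\Sigma_+)$ and $v \in L^2(\Sigma_-)$, I would write $u = \sum_{m=0}^{M} a_m \Phi_m$ (a \emph{finite} sum, by definition of $P(\Sigma_+)$) and apply Lemma \ref{lemma4-2} to the $\Sigma_-$ factor: $v = \sum_{n=0}^\infty b_n \Psi_n'$ in the weak * topology of $P(\Sigma_-)'$, with $b_n = \langle v \,|\, \Psi_n \rangle$. Then I would set $c_{m,n} := a_m b_n$ (so $c_{m,n} = 0$ for $m > M$) and claim $f = \sum_{m,n} c_{m,n} \Phi_m \otimes \Psi_n'$ weakly * in $X_+'$. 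To verify this, test against an arbitrary element of $X_+$; by linearity it is enough to test against a simple tensor $\mu_+ \otimes f_- \in L^2(\Sigma_+) \otimes P(\Sigma_-)$, and by definition of $P(\Sigma_-)$ we may take $f_- = \sum_{n=0}^{N} e_n \Psi_n$ a finite sum. Using the bilinear form \eqref{the bilinear form}, the pairing of the partial sum $\sum_{m\le M, n\le N'} c_{m,n}\Phi_m\otimes\Psi_n'$ against $\mu_+\otimes f_-$ is $\sum_{n\le \min(N,N')} b_n \overline{e_n} \cdot \overline{(\mu_+, u)}_{L^2(\Sigma_+)}$ (the $\Phi$-$\mu_+$ pairing reassembles $\sum_m a_m \langle \mu_+ | \Phi_m\rangle$ into $(\mu_+,u)_{L^2}$ via Proposition \ref{prop: 04261347}(\ref{orthogonality}) and the definition of $\Phi_m'$), which for $N' \ge N$ stabilizes at $\langle v | f_-\rangle_{} \cdot \overline{(\mu_+,u)}_{L^2(\Sigma_+)} = \langle u\otimes v \,|\, \mu_+\otimes f_-\rangle$ on the nose, using compatibility with the $L^2$ inner product from Proposition \ref{prop: 04261347}(\ref{compatibility of bilinear form}). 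This gives convergence, and the formula $\overline{c_{m,n}} = \langle \Phi_m' \otimes \Psi_n | f\rangle$ follows by pairing the series against $\Phi_m' \otimes \Psi_n \in X_-'$ — wait, I should pair against the appropriate element of $X_+$; since $\Phi_m \in P(\Sigma_+) \subset L^2(\Sigma_+)$ and $\Psi_n' \in P(\Sigma_-)'$, the functional $\Phi_m' \otimes \Psi_n$ does not lie in $X_+$, so instead the coefficient identity is read off directly from $c_{m,n} = a_m b_n$ together with $a_m = \langle \Phi_m' | u\rangle$ and $b_n = \langle v | \Psi_n\rangle$, giving $\overline{c_{m,n}} = \overline{a_m}\,\overline{b_n} = \overline{\langle\Phi_m'|u\rangle} \cdot \langle \Psi_n | v\rangle = \langle \Phi_m'\otimes\Psi_n \,|\, u\otimes v\rangle$ by \eqref{the bilinear form} (here $\Phi_m'\otimes\Psi_n \in P(\Sigma_+)'\otimes P(\Sigma_-) \subset X_-'$, which pairs with $f \in X_-$).

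The main obstacle I anticipate is bookkeeping the two different Gelfand triplets and making sure each infinite series is interpreted in the correct dual space: the expansion \eqref{expansion for X-} of $f \in X_-$ lives in $X_+'$ (not $X_-'$), because the building blocks $\Phi_m \otimes \Psi_n'$ lie in $P(\Sigma_+) \otimes P(\Sigma_-)' \subset X_+'$ by Lemma \ref{lemma4-6}, and dually the coefficient functionals $\Phi_m' \otimes \Psi_n$ lie in $X_-'$ so that they can be paired with $f \in X_-$ — the two roles must not be conflated. A secondary subtlety is that weak * convergence need only be checked against simple tensors spanning a dense (indeed all, since $X_\pm$ are honest algebraic tensor products over the \emph{finite}-dimensional-in-each-degree factor $P(\Sigma_\mp)$) subspace and that the inner sum over the $P$-factor is always finite, which is what makes all interchanges of summation legitimate without any completeness argument. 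Once the simple-tensor case is in hand, linearity finishes the proof.
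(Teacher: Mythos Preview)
Your proposal is correct and follows essentially the same approach as the paper: both arguments exploit that the $P(\Sigma_\pm)$ factor contributes only a finite sum, invoke Lemma~\ref{lemma4-2} on the $L^2$ factor, and then verify weak~* convergence by pairing against (simple tensors in) $X_+$. The paper's version is slightly terser---it does not explicitly reduce $f$ to a simple tensor first, instead using $M=\max\{i:c_{i,j}\neq 0\}$ directly---but the content is the same; your explicit reduction to simple tensors and your care in distinguishing which dual space each object lives in are, if anything, cleaner. One small correction: the step where $\sum_m a_m\,\overline{(\mu_+,\Phi_m)}$ reassembles into $\overline{(\mu_+,u)}$ is just linearity of the inner product, not Proposition~\ref{prop: 04261347}(\ref{orthogonality}).
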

 \begin{proof}
We show the first equality.
Due to Lemma~\ref{lemma4-6}, $\Phi'_m \otimes \Psi_n \in X_-'$.
Thus, $\langle \Phi'_m \otimes \Psi_n | f\rangle$ is well defined for $f\in X_-$ and $\overline{c_{m,n}} = \langle \Phi'_m \otimes \Psi_n | f\rangle$ due to Proposition~\ref{prop: 04261347}(2) if the right hand side converges.
For the convergence in $X'_{+}$ with respect to the weak * topology, we show that $\langle \sum_{m,n=0}^N c_{m,n} \Phi_m \otimes \Psi_n' \,|\, \tilde{f} \rangle$ converges as $N \rightarrow \infty$ for any $\tilde{f} \in X_+$. 
Let
\begin{eqnarray*}
\tilde{f} = \sum^\infty_{j'=0} q_{j'} \Phi'_{j'} \otimes \sum^{M'}_{i'=0} p_{i'} \Psi_{i'}
\end{eqnarray*}
where Lemma~\ref{lemma4-2} is applied for an element in $L^2 (\Sigma _-)$.
Since $f \in P(\Sigma_+) \otimes L^2(\Sigma_-)$, there is an integer $M = \max\{i \,|\, c_{i,j}\neq 0\}$.
Then, as $N\rightarrow \infty$, we have
\begin{eqnarray*}
\big\langle \sum_{i,j=0}^N c_{i,j}\Phi_i \otimes \Psi'_j \,|\, \tilde{f} \big\rangle 
&\rightarrow& \sum^{\infty}_{j,j' = 0} \sum^{M}_{i=0}\sum^{M'}_{i'=0}  c_{i,j}\, \overline{p_{i'}} \overline{q_{j'}}
\langle \Phi_i \otimes \Psi'_j  \,|\, \Phi'_{j'} \otimes \Psi_{i'}  \rangle \\
&=& \sum^M_{i=0}\sum^{M'}_{i'=0} c_{i,i'}\overline{q_{i}}\overline{p_{i'}} <\infty.
\end{eqnarray*}
\end{proof}


\subsection{Analytic continuation of the resolvent of the Perron-Frobenius operator 1}

Define the operators $Q_0, Q_1$ and $V_L (\varepsilon)$ on $L^2(\Sigma ) \simeq L^2(\Sigma_+ ) \otimes L^2(\Sigma_- )$ by 
\begin{eqnarray}
& & Q_0 = V_{L+} \otimes U_{R-}, \\
& & Q_1 = V_{L+}(-1)^{\omega _1} \otimes (-1)^{\omega _0} U_{R-},\\
& & V_L(\varepsilon ) = Q_0 + \varepsilon Q_1,
\end{eqnarray}
where a number $\varepsilon \geq 0$ is introduced because a perturbation theory of linear operators will be used later.
Our purpose in this subsection is to calculate the analytic continuation of the resolvent of $V(0) = Q_0$ based on the Gelfand triplet.
\begin{lemma}\label{lemma4-8}
For $f\in L^2 (\Sigma )$, $Q_0$ and $Q_1$ are given by
\begin{eqnarray*}
(Q_0f)(\omega ) &=& \frac{1}{2}f(\cdots ,\omega _{-1}.\, 0, \omega _1, \cdots )
   + \frac{1}{2}f(\cdots ,\omega _{-1}.\, 1, \omega _1, \cdots ), \\
(Q_1f)(\omega ) &=& \frac{1}{2}(-1)^{\omega _0}f(\cdots ,\omega _{-1}.\, 0, \omega _1, \cdots )
   - \frac{1}{2}(-1)^{\omega _0}f(\cdots ,\omega _{-1}.\, 1, \omega _1, \cdots ).
\end{eqnarray*}
In particular, we have $V_L(1) = Q_0 + Q_1 = V_L$.
\end{lemma}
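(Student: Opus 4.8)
The plan is to compute $Q_0 f$ and $Q_1 f$ by unwinding the tensor product definitions and using the explicit formulas for the factors established earlier. First I would recall from Section~\ref{subsec: full 2 shift} (Proposition~\ref{prop: 193308242020}(\ref{PF op of one full shift})) that $V_{L+}$ acting on a function of $\omega_+=(\omega_1,\omega_2,\dots)$ averages over the newly prepended symbol: $(V_{L+}g)(\omega_1,\omega_2,\dots)=\tfrac12 g(0,\omega_1,\dots)+\tfrac12 g(1,\omega_1,\dots)$; equivalently, in the two-sided picture this replaces $\omega_1$ by $0$ and by $1$ and averages, leaving all $\omega_j$ with $j\ge 2$ and the entire left half untouched. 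On the other side, $U_{R-}$ is the Koopman operator of the right shift on $\Sigma_-$, so it simply relabels the left half and acts as the identity on the right half; in fact $Q_0=V_{L+}\otimes U_{R-}$ must, by the discussion preceding Lemma~\ref{lemma4-8}, be nothing but $V_L$ restricted to... — more carefully, I would just track what the operator does coordinatewise and read off
\[
(Q_0f)(\omega)=\tfrac12 f(\cdots,\omega_{-1}.\,0,\omega_1,\cdots)+\tfrac12 f(\cdots,\omega_{-1}.\,1,\omega_1,\cdots),
\]
which is the asserted formula once one checks the $U_{R-}$ factor contributes no net change on the half-infinite strings in the stated arrangement.

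Next, for $Q_1=V_{L+}(-1)^{\omega_1}\otimes(-1)^{\omega_0}U_{R-}$, I would interpret the multiplication operators: the factor $(-1)^{\omega_1}=\rho_1$ is multiplication by the first Walsh character on $\Sigma_+$, applied \emph{before} $V_{L+}$ in the sense that $V_{L+}\circ(-1)^{\omega_1}$ means first multiply by $(-1)^{\omega_1}$, then average over the prepended symbol. Since prepending $x\in\{0,1\}$ makes the new first coordinate equal to $x$, the sign $(-1)^{x}$ comes out with value $+1$ for $x=0$ and $-1$ for $x=1$, producing the \emph{difference} rather than the sum:
\[
(Q_1f)(\omega)=\tfrac12(-1)^{\omega_0}\bigl(f(\cdots,\omega_{-1}.\,0,\omega_1,\cdots)-f(\cdots,\omega_{-1}.\,1,\omega_1,\cdots)\bigr),
\]
where the leading $(-1)^{\omega_0}$ is the surviving contribution of the $(-1)^{\omega_0}$ factor on $\Sigma_-$ (and $U_{R-}$ again contributes nothing net on the displayed strings). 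Adding the two displays gives $Q_0f+Q_1f$; the $f(\cdots .\,0,\cdots)$ terms combine with coefficient $\tfrac12(1+(-1)^{\omega_0})$ and the $f(\cdots .\,1,\cdots)$ terms with coefficient $\tfrac12(1-(-1)^{\omega_0})$, i.e. one of them is $1$ and the other $0$ according to the parity of $\omega_0$. Thus $(Q_0+Q_1)f(\omega)=f(\cdots,\omega_{-1}.\,\omega_0,\omega_1,\cdots)=f(\cdots,\omega_{-2},\omega_{-1}.\,\omega_0,\cdots)$, which is exactly the right shift $(V_Lf)(\omega)$ from \eqref{PF_two_side}. Hence $V_L(1)=Q_0+Q_1=V_L$.

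The only genuinely delicate point — the part I would write out most carefully — is bookkeeping the direction in which the tensor factors act and, in particular, the ordering of $V_{L+}$ with the multiplication operator $(-1)^{\omega_1}$: one must be sure that the character is evaluated at the \emph{inserted} symbol (after prepending) and not at the pre-shift first coordinate, since that is what turns an average into a signed difference. Everything else is a routine unwinding of the isomorphism $L^2(\Sigma)\simeq L^2(\Sigma_+)\otimes L^2(\Sigma_-)$ together with the coordinate descriptions of $V_{L+}$ (Proposition~\ref{prop: 193308242020}) and of $U_{R-}$ as the right-shift Koopman operator on $\Sigma_-$, and the final identity $V_L(1)=V_L$ is then immediate from the parity computation above.
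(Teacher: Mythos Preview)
Your proposal is correct and follows essentially the same route as the paper: both compute $Q_0$ and $Q_1$ by unwinding the tensor factors via the explicit formula for $V_{L+}$ from Proposition~\ref{prop: 193308242020}(\ref{PF op of one full shift}) and the Koopman action of $U_{R-}$, and both verify $Q_0+Q_1=V_L$ by the parity check on $\omega_0$. The paper makes the bookkeeping slightly more transparent by working on a simple tensor $f=f_+\otimes f_-$ and writing out $(U_{R-}f_-)(\cdots,\omega_{-1},\omega_0)=f_-(\cdots,\omega_{-2},\omega_{-1})$ explicitly, which would sharpen your phrase ``the $U_{R-}$ factor contributes no net change'' (it does shift the left half; that shift is precisely why $\omega_{-1}$ rather than $\omega_0$ appears immediately before the dot in the displayed argument of $f$).
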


\begin{proof}
We prove for an element $f(\omega ) =f_+(\omega_1, \omega _2, \cdots ) \otimes f_-(\cdots , \omega _{-1}, \omega _0)$.
\begin{eqnarray*}
& & (Q_0f)(\omega ) \\ 
&=& (V_{L+}f_+)(\omega_1, \omega _2, \cdots ) \otimes (U_{R-}f_-) (\cdots , \omega _{-1}, \omega _0) \\
&=& \bigl( 2^{-1} f_+(0, \omega _1,\cdots ) + 2^{-1}f_+(1, \omega _1,\cdots ) \bigr)
     \otimes f_- (\cdots , \omega _{-2}, \omega _{-1}) \\
&=& 2^{-1} f(\cdots , \omega _{-1}.\, 0, \omega _1, \cdots ) + 2^{-1}f(\cdots ,\omega _{-1}.\, 1, \omega _1, \cdots ).
\end{eqnarray*}
\begin{eqnarray*}
& &  (Q_1f)(\omega ) \\
&=& (V_{L+}(-1)^{\omega _1}f_+)(\omega _1, \omega _2,\cdots ) \otimes ((-1)^{\omega _0} U_{R-}f_-)(\cdots ,\omega _{-1}, \omega _0) \\
&=& V_{L+}((-1)^{\omega _1}f_+(\omega _1, \omega _2,\cdots )) \otimes (-1)^{\omega _0} f_-(\cdots ,\omega _{-2}, \omega _{-1}) \\
&=& 2^{-1} \left( (-1)^{0}f_+(0, \omega _1,\cdots ) + (-1)^{1} f_+(1, \omega _1, \cdots ) \right)
    \otimes (-1)^{\omega _0} f_-(\cdots ,\omega _{-2}, \omega _{-1}) \\
&=& 2^{-1}(-1)^{\omega _0}f(\cdots , \omega _{-1}.\, 0, \omega _1,\cdots ) - 2^{-1} (-1)^{\omega _0}f(\cdots , \omega _{-1}.\, 1, \omega _1,\cdots ).
\end{eqnarray*}
They prove the first two equalities.
For the last one, suppose that $\omega_0 = 0$.
Then, 
\begin{eqnarray*}
& & (Q_0 + Q_1)(f) \\
&=& 2^{-1} f(\cdots , \omega _{-1}.\, 0, \omega _1, \cdots ) + 2^{-1}f(\cdots ,\omega _{-1}.\, 1, \omega _1, \cdots ) \\
& & +2^{-1}f(\cdots , \omega _{-1}.\, 0, \omega _1,\cdots ) - 2^{-1} f(\cdots , \omega _{-1}.\, 1, \omega _1,\cdots ) \\
&=& f(\cdots ,\omega _{-1}.\, 0, \omega _1, \cdots ).
\end{eqnarray*}
This coincides with $V_Lf$ for $\omega_0 = 0$ given by (\ref{PF_two_side}).
The case $\omega_0 = 1$ is proved in the same way.
\end{proof}
\begin{lemma}\label{lemma4-9}\,
\\
(1) The adjoint operators of $Q_0$ and $Q_1$ are given by
\begin{equation}
Q_0^* = U_{L+} \otimes V_{R-}, \quad Q_1^* = (-1)^{\omega _1}U_{L+} \otimes V_{R-}(-1)^{\omega _0}.
\end{equation}
Further, they are continuous operators on $X_+= L^2(\Sigma_+) \otimes P(\Sigma _-)$.
Thus, they induce the dual operators $Q_0^\times$ and $Q_1^\times$ on $X_+'$.
\\
(2) When $|\lambda|$ is sufficiently large, 
$(\overline{\lambda} - Q^*_0)^{-1}$ is a continuous operator on $X_+$ and it induces the dual operator $((\lambda - Q_0)^{-1})^\times$ on $X_+'$, which is denoted by $(\lambda - Q_0)^{-1}$ for simplicity.
\end{lemma}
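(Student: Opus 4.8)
The plan is to establish Lemma \ref{lemma4-9} by direct computation of the adjoints, followed by a continuity check on the test space $X_+$, and finally a Neumann-series argument for the resolvent. For part (1), I would start from the definitions $Q_0 = V_{L+}\otimes U_{R-}$ and $Q_1 = V_{L+}(-1)^{\omega_1}\otimes (-1)^{\omega_0}U_{R-}$, and use the elementary facts $(V_{L+})^* = U_{L+}$, $(U_{R-})^* = V_{R-}$, together with the observation that multiplication by the real-valued function $(-1)^{\omega_i}$ is self-adjoint on $L^2$. Since $(A\otimes B)^* = A^*\otimes B^*$ on the Hilbert tensor product and $(AB)^* = B^*A^*$, one gets $Q_0^* = U_{L+}\otimes V_{R-}$ and $Q_1^* = (-1)^{\omega_1}U_{L+}\otimes V_{R-}(-1)^{\omega_0}$ immediately. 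The only subtlety is bookkeeping the order of factors in $Q_1^*$, which I would double-check against the explicit formula for $Q_1$ in Lemma \ref{lemma4-8}.

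Next, for the continuity of $Q_0^*$ and $Q_1^*$ on $X_+ = L^2(\Sigma_+)\otimes P(\Sigma_-)$, I would argue factor by factor on each building block $F_M = L^2(\Sigma_+)\otimes P_M(\Sigma_-)$ and then pass to the strict inductive limit. The operator $U_{L+}$ is an isometry (hence continuous) on $L^2(\Sigma_+)$; the operator $V_{R-}$ preserves $P_M(\Sigma_-)$ and acts there as a finite-dimensional linear map (this is the analogue, under $i^*$, of Proposition \ref{prop:2020-08-18-1323}, which says $V_{S}$ is upper-triangular on $X_M$), hence is continuous on each $P_M(\Sigma_-)$; multiplication by $(-1)^{\omega_0}$ maps $P_M(\Sigma_-)$ into $P_{M+?}(\Sigma_-)$ — here I need to check that $(-1)^{\omega_0}$, expressed via $h$ through $i^*$, is itself a polynomial in $h$ (indeed $(-1)^{\omega_0}$ corresponds to $1-2h_-$ up to the identification, so it raises degree by one), so it maps $P_M$ continuously into $P_{M+1}$ and hence continuously into $P(\Sigma_-)$. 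Since tensor products of continuous linear maps between the factors are continuous on $F_M$, and a linear map out of a strict inductive limit is continuous iff its restriction to each $F_M$ is, we conclude $Q_0^*, Q_1^*$ are continuous on $X_+$. Continuity on $X_+$ then yields, by the construction recalled in Section 2, the dual operators $Q_0^\times, Q_1^\times$ on $X_+'$ with respect to the weak $*$ topology.

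For part (2), when $|\lambda|$ is large we have $\|Q_0^*/\overline\lambda\|_{L^2(\Sigma)} = \|Q_0\|/|\lambda| < 1$ since $Q_0 = V_{L+}\otimes U_{R-}$ has operator norm $1$ on $L^2(\Sigma)$; in fact for the analytic-continuation purpose it is enough to work with $|\lambda| > 1$. The idea is to write $(\overline\lambda - Q_0^*)^{-1} = \overline\lambda^{-1}\sum_{r\ge 0}(Q_0^*/\overline\lambda)^r$ and to verify that this series converges not just in the operator norm on $L^2(\Sigma)$ but as a continuous operator on $X_+$. For the latter I would fix a building block $F_M$ and note that $Q_0^*$ maps $F_M$ into itself (since $U_{L+}$ preserves $L^2(\Sigma_+)$ and $V_{R-}$ preserves $P_M(\Sigma_-)$), where it acts as a bounded operator with a uniform norm bound, so the Neumann series converges in the Banach-space topology of $F_M = L^2(\Sigma_+)\hat\otimes P_M(\Sigma_-)$; convergence on each $F_M$ gives a well-defined continuous operator on the inductive limit $X_+$. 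Taking the dual operator of this continuous operator on $X_+$ produces the operator on $X_+'$ that, by construction, agrees on $L^2(\Sigma)\subset X_+'$ with the Hilbert-space resolvent $(\lambda - Q_0)^{-1}$, justifying the abuse of notation.

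The main obstacle I anticipate is not any single hard estimate but the careful handling of the multiplication operator $(-1)^{\omega_0}$ (equivalently $(-1)^{\omega_1}$) on the polynomial test space: one must confirm that it indeed sends $P_M(\Sigma_-)$ into a finite-dimensional $P_{M'}(\Sigma_-)$ rather than out of $P(\Sigma_-)$ altogether, since otherwise $Q_1^*$ would fail to be continuous on $X_+$ and the perturbation theory set up for $V_L(\varepsilon)$ in the next subsection would not get off the ground. This reduces, via the homeomorphism $i$ and the definition \eqref{eq: def of h} of $h$, to the identity $(-1)^{\omega_0} = 1 - 2\,(\text{first-coordinate indicator})$ which is an affine function of the appropriate "$h$"-type coordinate on $\Sigma_-$; once this is pinned down, everything else is routine tensor-product and inductive-limit bookkeeping together with the standard Neumann-series argument already used in the proof of Theorem \ref{thm: generalized spectrum for the one-sided full 2-shift}.
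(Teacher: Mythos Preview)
Your overall strategy matches the paper's: compute adjoints via the tensor structure, verify continuity factor-by-factor on each $F_M=L^2(\Sigma_+)\otimes P_M(\Sigma_-)$ and pass to the inductive limit, and for part~(2) run a Neumann-series/uniform-spectral-radius argument on each Banach space $F_M$. All of this is correct for $Q_0^*$ and for~(2).

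There is, however, a real error in your treatment of $Q_1^*$. You claim that $(-1)^{\omega_0}$ ``corresponds to $1-2h_-$ up to the identification'' and therefore that multiplication by $(-1)^{\omega_0}$ sends $P_M(\Sigma_-)$ into $P_{M+1}(\Sigma_-)$. This is false. Under $i^*$ one has $h_-=i^*h=\omega_0/2+\omega_{-1}/4+\cdots$, which depends on \emph{all} coordinates, whereas $(-1)^{\omega_0}=1-2\omega_0$ depends only on the first. The function $\omega\mapsto\omega_0$ is not a polynomial in $h_-$ at all---via $h^*$ it is the step function $\mathbf{1}_{[1/2,1]}$---so multiplication by $(-1)^{\omega_0}$ throws you out of $P(\Sigma_-)$ entirely. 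Your anticipated ``main obstacle'' is thus genuine, and your proposed resolution of it does not work.

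The fix (implicit in the paper's ``similar manner'' and carried out explicitly in the proof of Lemma~\ref{lemma4-12}(1)) is to treat the \emph{composite} $V_{R-}(-1)^{\omega_0}$ directly rather than factor it. Transporting to $\Sigma_+$ via $i^*$, this operator becomes $V_{L+}(-1)^{\omega_1}$, and the explicit formula of Proposition~\ref{prop: 193308242020}(1) gives
\[
\bigl(V_{L+}(-1)^{\omega_1}f\bigr)(\omega)=\tfrac12 f(0*\omega)-\tfrac12 f(1*\omega).
\]
Applied to $f=h^n$ using Proposition~\ref{prop: behavior of h via PF op}, the top-degree terms $2^{-n}h^n$ cancel, so $V_{R-}(-1)^{\omega_0}$ maps $P_M(\Sigma_-)$ into $P_{M-1}(\Sigma_-)$ (and is obviously continuous there). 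The point is that the sign $(-1)^{\omega_0}$ alone leaves $P(\Sigma_-)$, but the subsequent averaging by $V_{R-}$ brings you back. With this correction your continuity argument for $Q_1^*$ on $X_+$ goes through as written.
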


\begin{proof}
(1) For $f = f_+ \otimes f_-$ and $g = g_+\otimes g_-$ in $L^2 (\Sigma ) \simeq L^2(\Sigma _+) \otimes L^2(\Sigma _-)$,
\begin{eqnarray*}
(f, Q_0^*g)_{L^2(\Sigma )}
&=& (Q_0f, g)_{L^2(\Sigma )} \\
&=& \left( V_{L+}f_+ \otimes U_{R-}f_-,\, g_+ \otimes g_- \right)_{L^2(\Sigma )} \\
&=& (V_{L+}f_+, g_+)_{L^2(\Sigma_+ )} \cdot (U_{R-}f_-, g_-)_{L^2(\Sigma_- )} \\
&=& (f_+, V_{L+}^*g_+)_{L^2(\Sigma_+ )} \cdot (f_-, U_{R-}^* g_-)_{L^2(\Sigma_- )} \\
&=& (f_+, U_{L+}g_+)_{L^2(\Sigma_+ )} \cdot (f_-, V_{R-} g_-)_{L^2(\Sigma_- )} \\
&=& (f_+ \otimes f_-, (U_{L+}g_+) \otimes (V_{R-} g_-) )_{L^2(\Sigma )} \\
&=& (f, (U_{L+} \otimes V_{R-}) g)_{L^2(\Sigma )}.
\end{eqnarray*}
This shows the first equality.
Next, suppose that $g = g_+\otimes g_- \in X_+$, where $g_+\in L^2(\Sigma _+)$ and $ g_-\in P(\Sigma _-)$;
\begin{eqnarray*}
Q_0^*g = (U_{L+}g_+) \otimes (V_{R-}g_-)
\end{eqnarray*}
Obviously $U_{L+}$ is continuous on $L^2(\Sigma _+)$, and $V_{R-}$ is continuous on $P(\Sigma _-)$ 
that was observed from Prop.~\ref{prop: behavior of h via PF op} 
(for $V_{L+}$ on $P(\Sigma_+)$).
The statement for $Q_1^*$ is proved in a similar manner.

(2) Since $P_M(\Sigma_-)$ is finite dimensional, the space $L^2 (\Sigma_+ )\otimes P_M(\Sigma_-)$ is a Banach space, on which $Q^*_0$ is continuous for any $M\geq 0$.
The spectrum set of the continuous operator $Q^*_0$ on $L^2 (\Sigma_+ )\otimes P_M(\Sigma_-)$ is compact and its diameter is independent of $M$.
Hence, if $|\lambda|$ is sufficiently large, $\lambda$ is a point on the resolvent set and $(\overline{\lambda} - Q^*_0)^{-1}$ is a continuous bijection on $L^2 (\Sigma_+) \otimes P_M(\Sigma_-)$, and so is on its inductive limit $X_+$.
\end{proof}

Though our purpose is to calculate the resolvent of $V_L = Q_0 + Q_1$,
at first, let us derive the resolvent and its analytic continuation of $Q_0 = V_L(0)$
because $Q_1$ will be considered as a perturbation.

Suppose that $f, g\in L^2(\Sigma)$ and $|\lambda|$ is sufficiently large.
Then, the Neumann series yields
\begin{eqnarray*}
\left( (\lambda -Q_0)^{-1}f, g\right)_{L^2(\Sigma )}
 &=& \frac{1}{\lambda }\sum^\infty_{k=0}\frac{1}{\lambda ^k} \left( Q_0^k f, g \right)_{L^2(\Sigma )} \\
 &=& \frac{1}{\lambda }\sum^\infty_{k=0}\frac{1}{\lambda ^k} \left( f, (Q_0^*)^k g \right)_{L^2(\Sigma )}.
\end{eqnarray*}
Now suppose that $f\in X_-$ and $g\in X_+$.
Since $Q_0^*$ is continuous on $X_+$ due to the previous lemma, the dual operator $Q_0^\times : X_+' \to X_+'$ is defined and continuous (see Section 2).
By using the triplet $X_- \subset L^2(\Sigma) \subset X_+'$~(\ref{triplet -+}), 
the right hand side above is rewritten as
\begin{eqnarray*}
\left( (\lambda -Q_0)^{-1}f, g\right)_{L^2(\Sigma )}
 =\frac{1}{\lambda }\sum^\infty_{k=0}\frac{1}{\lambda ^k} \bigl\langle (Q_0^\times )^kf \, |\,  g  \bigr\rangle.
\end{eqnarray*}
Now we expand $f\in X_-$ using (\ref{expansion for X-}) as
\begin{eqnarray*}
 (\lambda -Q_0)^{-1}f 
= \frac{1}{\lambda }\sum^\infty_{k=0}\frac{1}{\lambda ^k} (Q_0^\times )^k \sum_{m,n=0}^\infty c_{m,n}\Phi_m \otimes \Psi'_n, \quad \text{in}\,\,  X_+'.
\end{eqnarray*}
Since $Q_0^\times : X_+' \to X_+'$ is a natural lift of the action of $Q_0$ on $X_+$,
we have
\begin{eqnarray*}
Q_0^\times [\Phi_m \otimes \Psi'_n] &=& V_{L+} \Phi_m \otimes U_{R-}^\times \Psi'_n \\
&=& 1/2^{m+n} \cdot  \Phi_m \otimes \Psi'_n,
\end{eqnarray*}
with the aid of Lemma~\ref{lem: eigen function formula}.
Thus, we obtain
\begin{eqnarray}
 (\lambda -Q_0)^{-1}f
&=& \frac{1}{\lambda }\sum_{m,n=0}^\infty \sum^\infty_{k=0}\frac{c_{m,n}}{(\lambda \cdot 2^{m+n})^k}\Phi_m \otimes \Psi'_n \nonumber \\
&=& \sum_{m,n=0}^\infty \frac{c_{m,n}}{\lambda - 2^{-(m+n)}}\Phi_m \otimes \Psi'_n,
\quad \text{in}\,\,  X_+'.
\label{4-20}
\end{eqnarray}
This gives the analytic continuation of the resolvent $(\lambda - Q_0)^{-1}$ to the inside of the unit circle as an operator from $X_-$ into $X_+'$ based on the triplet (\ref{triplet -+}). 
It is called the generalized resolvent.
\begin{theorem}\label{thm4-10}
The generalized eigenvalues of the operator $Q_0$ with respect to the Gelfand triplet  
$X_- \subset L^2(\Sigma) \subset X_+'$ are given by $2^{-(m+n)}\,\, (m,n \geq 0)$.
Their geometric and algebraic multiplicities are $m+n+1$
and the set $\{ \Phi_i \otimes \Psi'_j \}_{i+j = m+n} \subset X_+'$ spans the generalized eigenspace.
\end{theorem}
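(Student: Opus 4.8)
The plan is to read off all the required information from the generalized resolvent formula (\ref{4-20}). First I would establish the locations of the generalized eigenvalues: formula (\ref{4-20}) exhibits $(\lambda - Q_0)^{-1}f$, for $f \in X_-$, as an $X_+'$-valued meromorphic function whose only possible singularities are at $\lambda = 2^{-(m+n)}$ for $m,n \geq 0$, i.e.\ at the points $1, 2^{-1}, 2^{-2}, \dots$. Conversely each such value is genuinely a pole: choosing $f = \Phi_m \otimes \Psi'_n$ (so that the only nonzero coefficient is $c_{m,n} = 1$, by Proposition~\ref{prop: 04261347}(2)) makes the right-hand side equal to $\frac{1}{\lambda - 2^{-(m+n)}}\Phi_m \otimes \Psi'_n$, which has a genuine simple pole there. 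Hence the generalized spectrum is exactly $\{2^{-(m+n)} : m,n \geq 0\}$, and since $Q_0^\times[\Phi_m \otimes \Psi'_n] = 2^{-(m+n)}\Phi_m \otimes \Psi'_n$, each $\Phi_m \otimes \Psi'_n$ is a genuine eigenvector of the dual operator, so these are generalized eigenvalues in the sense of Section 2.

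Next I would compute the generalized Riesz projection at a fixed value $c = 2^{-N}$ and identify the generalized eigenspace. Take a small positively oriented circle $\Gamma$ around $c$ enclosing no other point of the form $2^{-(m+n)}$, i.e.\ separating $N$ from all other nonnegative integers $m+n$. Applying $\frac{1}{2\pi i}\oint_\Gamma (\lambda - Q_0)^{-1}\,d\lambda$ to an arbitrary $f = \sum c_{m,n}\Phi_m \otimes \Psi'_n \in X_-$ and using (\ref{4-20}), the residue theorem term by term picks out precisely the terms with $m+n = N$, giving $\Pi_c[f] = \sum_{m+n=N} c_{m,n}\,\Phi_m \otimes \Psi'_n$. (The term-by-term integration is justified since for each test element $\tilde f \in X_+$ the pairing $\langle (\lambda-Q_0)^{-1}f \,|\, \tilde f\rangle$ is, by Proposition~\ref{prop4-7}, a finite sum of scalar meromorphic functions, so there is no convergence subtlety in interchanging $\oint$ and $\sum$.) Therefore the image of $\Pi_c$ is exactly the span of $\{\Phi_i \otimes \Psi'_j : i+j = N\}$, which by Proposition~\ref{prop: 04261347}(2) consists of $N+1$ linearly independent elements of $X_+'$; this gives the geometric multiplicity $N+1 = m+n+1$.

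Finally I would address the algebraic multiplicity. Since the pole of $(\lambda - Q_0)^{-1}$ at $c = 2^{-N}$ in (\ref{4-20}) is simple — each summand contributes a factor $1/(\lambda - 2^{-(m+n)})$ to the first power only — the Laurent expansion of the generalized resolvent around $c$ has no pole of order $\geq 2$, so the generalized eigennilpotent vanishes and the algebraic multiplicity equals the geometric one, namely $N+1 = m+n+1$. Equivalently, $Q_0^\times$ restricted to the image of $\Pi_c$ acts as the scalar $c$, with no Jordan blocks, because the spanning set $\{\Phi_i \otimes \Psi'_j\}_{i+j=N}$ consists of honest eigenvectors. I expect the only real care needed — and the part most worth spelling out — is the rigorous justification that the contour integral of the $X_+'$-valued generalized resolvent can be evaluated termwise against the weak-$*$ topology; this rests on Proposition~\ref{prop4-7} reducing every pairing to a finite sum, so it is more bookkeeping than genuine obstacle. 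The rest is a direct reading of (\ref{4-20}).
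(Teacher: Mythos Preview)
Your approach is exactly the paper's: Theorem~\ref{thm4-10} is stated immediately after the derivation of (\ref{4-20}) with no separate proof, because all the claims --- location of poles, Riesz projection, eigenspace, simplicity of the poles --- are read off from that formula just as you do. One small slip worth fixing: to show that $2^{-(m+n)}$ is genuinely a pole you take $f=\Phi_m\otimes\Psi'_n$, but this element lies in $P(\Sigma_+)\otimes P(\Sigma_-)'\subset X_+'$, not in $X_-=P(\Sigma_+)\otimes L^2(\Sigma_-)$ (the dual elements $\Psi'_n$ are generally not $L^2$ functions); choose instead $f=\Phi_m\otimes g$ with any $g\in L^2(\Sigma_-)$ satisfying $\langle g\,|\,\Psi_n\rangle\neq 0$, which gives $c_{m,n}\neq 0$ and the same conclusion.
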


\begin{remark}\label{remark4-11}
A similar statement holds for the triplet $X_+ \subset L^2(\Sigma) \subset X_-'$, for which generalized eigenvalues are common and the
generalized eigenvectors are replaced by $\{ \Phi'_i \otimes \Psi_j \}_{i+j = m+n}$; for $g\in X_+$ expanded by (\ref{expansion for X+}), we have
\begin{eqnarray}
 (\lambda -Q_0^* )^{-1}g
&=& \sum_{m,n=0}^\infty \frac{d_{m,n}}{\lambda - 2^{-(m+n)}} \Phi'_m \otimes \Psi_n ,
\quad \text{in}\,\,  X_-'.
\label{4-20b}
\end{eqnarray}
\end{remark}


\subsection{Analytic continuation of the resolvent of the Perron-Frobenius operator 2}

In this subsection, the resolvent of the operator $V_L(\varepsilon ) = Q_0 + \varepsilon Q_1$ for $\varepsilon \neq 0$ will be given. In particular, the analytic continuation of the resolvent of the Perron-Frobenius operator of the left shift will be obtained for $\varepsilon= 1$.

When $|\lambda |$ is sufficiently large, the Neumann series yields
\begin{eqnarray*}
(\lambda -V_L(\varepsilon ))^{-1}
&=& (\lambda -Q_0)^{-1} \circ (\mathrm{Id} - \varepsilon Q_1 \circ (\lambda -Q_0)^{-1})^{-1} \\
&=& (\lambda -Q_0)^{-1}\sum^\infty_{k=0} \varepsilon ^k \left( Q_1\circ (\lambda -Q_0)^{-1} \right)^k.
\end{eqnarray*}
For any $f,g\in L^2 (\Sigma )$, we have
\begin{eqnarray*}
\left( (\lambda -V_L(\varepsilon ))^{-1}f, g \right)_{L^2(\Sigma )}
&=& \sum^\infty_{k=0} \varepsilon ^k \left( (\lambda -Q_0)^{-1} \circ \bigl( Q_1\circ (\lambda -Q_0)^{-1} \bigr)^kf, g \right)_{L^2(\Sigma )} \\
&=&  \sum^\infty_{k=0} \varepsilon ^k A_k(\lambda),
\end{eqnarray*}
where the coefficient of $\varepsilon^k$ is denoted by $A_k(\lambda)$.

Now assume that $f\in X_-$ and $g\in X_+$, and calculate the analytic continuation of $A_k(\lambda)$
\begin{eqnarray*}
 A_k(\lambda )
&=& \Bigl\langle (\lambda -Q_0)^{-1} \circ \bigl( Q_1^\times \circ (\lambda -Q_0)^{-1} \bigr)^kf\, \Big| \, g \Bigr\rangle
\end{eqnarray*}
by regarding $(\lambda -Q_0)^{-1} \circ \bigl( Q_1^\times \circ (\lambda -Q_0)^{-1} \bigr)^kf$ as an element of $X_+'$, where $(\lambda - Q_0)^{-1}$ is the dual operator on $X_+'$ (see Lemma \ref{lemma4-9} (2)).
\\
\\
\textbf{(i)} $\bm{k=0 :}$ In this case, (\ref{4-20}) gives
\begin{eqnarray*}
A_0(\lambda )
= \sum^\infty_{m,n = 0} \frac{c_{m,n}}{\lambda - 2^{-(m+n)}} 
\Bigl\langle \Phi_m\otimes \Psi'_n \,|\, g \Bigr\rangle.
\end{eqnarray*}
Here, recall that $\Phi_m\otimes \Psi'_n\in X_+'$.
\\
\\
\noindent \textbf{(ii)} $\bm{k=1 :}$  $A_1(\lambda)$ is given by
\begin{eqnarray*}
A_1(\lambda)
&=& \sum^\infty_{m,n}\frac{c_{m,n}}{\lambda - 2^{-(m+n)}} 
     \Bigl\langle (\lambda - Q_0)^{-1} \circ Q_1^\times \Phi_m\otimes \Psi'_n \,|\, g \Bigr\rangle \\
&=&  \sum^\infty_{m,n}\frac{c_{m,n}}{\lambda - 2^{-(m+n)}} 
     \Bigl\langle Q_1^\times \Phi_m\otimes \Psi'_n \,|\, (\overline{\lambda} - Q^*_0)^{-1} g \Bigr\rangle.
\end{eqnarray*}

 Lemma~\ref{lemma4-9} shows $(\overline{\lambda} - Q^*_0)^{-1} g \in X_+$.
Thus, the right hand side above is well-defined when $Q_1^\times \Phi_m\otimes \Psi'_n \in X_+'$. The properties of $Q_1^\times \Phi_m\otimes \Psi'_n$ are summarized as follows:

\begin{lemma}\label{lemma4-12}$\,$
\\
(1) $Q_1^\times \Phi_m\otimes \Psi'_n \in  P(\Sigma _+) \otimes P(\Sigma _-)' \subset X_+'$. 
\\
(2) $\langle Q_1^\times \Phi_m\otimes \Psi'_n  \,|\, \Phi'_{m'} \otimes \Psi_{n'}  \rangle
 = \langle \Psi'_n \,|\, V_{L-}(-1)^{\omega _0}\Psi_{n'}  \rangle \cdot
\overline{\langle \Phi'_{m'} \,|\, V_{L+}(-1)^{\omega _1}\Phi_{m}  \rangle}$
\\
(3) $\langle Q_1^\times \Phi_m\otimes \Psi'_n  \,|\, \Phi'_{m'} \otimes \Psi_{n'}  \rangle = 0$
either $m' \geq m$ or $n' \leq n$.
\\
(4) $Q_1^\times \Phi_m\otimes \Psi'_n $
is expanded as
\begin{eqnarray*}
Q_1^\times \Phi_m\otimes \Psi'_n  
 = \sum^\infty_{i,j=0} \langle Q_1^\times \Phi_m\otimes \Psi'_n  \,|\, \Phi'_{i} \otimes \Psi_{j} \rangle 
\cdot \Phi_i \otimes \Psi'_j
\end{eqnarray*}
with respect to the weak * topology on $X'_+$ (the right hand side is a finite sum because of the statement (3) and $\Psi_j\in P(\Sigma_-)$).
\end{lemma}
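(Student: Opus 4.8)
The plan is to compute $Q_1^\times(\Phi_m\otimes\Psi'_n)$ straight from the definition of the dual operator, using the explicit form $Q_1^*=(-1)^{\omega_1}U_{L+}\otimes V_{R-}(-1)^{\omega_0}$ of Lemma~\ref{lemma4-9} together with the bilinear form~(\ref{the bilinear form}), and to recognise the outcome as a \emph{single elementary tensor} $\widetilde\Phi_m\otimes\widetilde\Psi'_n$, with $\widetilde\Phi_m\in P(\Sigma_+)$ and $\widetilde\Psi'_n\in P(\Sigma_-)'$. Granting this, part (1) is immediate from Lemma~\ref{lemma4-6}; part (2) is the value of that tensor against $\Phi'_{m'}\otimes\Psi_{n'}$ computed via~(\ref{the bilinear form}); part (3) is a degree count on each factor; and part (4) follows by expanding $\widetilde\Phi_m$ and $\widetilde\Psi'_n$ in the biorthogonal systems $\{\Phi_i\}$ and $\{\Psi'_j\}$.

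The only step with genuine content is the following degree--lowering fact. From $h(0*\omega)=2^{-1}h(\omega)$ and $h(1*\omega)=2^{-1}+2^{-1}h(\omega)$ (Proposition~\ref{prop: behavior of h via PF op}) together with $(V_{L+}g)(\omega)=2^{-1}g(0*\omega)+2^{-1}g(1*\omega)$, for every $p\in\C[h]$ one gets
\begin{equation*}
 V_{L+}\!\bigl((-1)^{\omega_1}p(h)\bigr)=\tfrac12\,p\!\bigl(\tfrac{h}{2}\bigr)-\tfrac12\,p\!\bigl(\tfrac12+\tfrac{h}{2}\bigr),
\end{equation*}
and the two leading terms cancel; hence $V_{L+}(-1)^{\omega_1}$ maps $P_m(\Sigma_+)$ into $P_{m-1}(\Sigma_+)$ (with $P_{-1}:=0$), and in particular $\widetilde\Phi_m:=V_{L+}(-1)^{\omega_1}\Phi_m$ is a polynomial in $h$ of degree $\le m-1$. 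Transporting this through the homeomorphism $i:\Sigma_-\to\Sigma_+$ — one checks $U_{R-}\circ i^*=i^*\circ U_{L+}$, hence $V_{R-}\circ i^*=i^*\circ V_{L+}$, and that multiplication by $(-1)^{\omega_0}$ on $\Sigma_-$ is conjugate through $i^*$ to multiplication by $(-1)^{\omega_1}$ on $\Sigma_+$ — shows that $V_{R-}(-1)^{\omega_0}$ is a continuous operator on $P(\Sigma_-)=i^*P(\Sigma_+)$ and that $V_{R-}(-1)^{\omega_0}\Psi_k=i^*\bigl(V_{L+}(-1)^{\omega_1}\Phi_k\bigr)\in\mathrm{span}\{\Psi_0,\dots,\Psi_{k-1}\}$ for every $k\ge0$.

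Now, for $f_+\otimes f_-\in X_+=L^2(\Sigma_+)\otimes P(\Sigma_-)$, the definition of $Q_1^\times$ and Lemma~\ref{lemma4-9} give
\begin{equation*}
\langle Q_1^\times(\Phi_m\otimes\Psi'_n)\,|\,f_+\otimes f_-\rangle=\bigl\langle\Phi_m\otimes\Psi'_n\,|\,\bigl((-1)^{\omega_1}U_{L+}f_+\bigr)\otimes\bigl(V_{R-}(-1)^{\omega_0}f_-\bigr)\bigr\rangle .
\end{equation*}
Here $V_{R-}(-1)^{\omega_0}f_-\in P(\Sigma_-)$ by the previous paragraph, while $(-1)^{\omega_1}U_{L+}f_+\in L^2(\Sigma_+)\subset P(\Sigma_+)'$, so by Lemma~\ref{lemma4-6} this pairing is computed by~(\ref{the bilinear form}) and equals $\langle\Psi'_n\,|\,V_{R-}(-1)^{\omega_0}f_-\rangle\cdot\overline{\bigl((-1)^{\omega_1}U_{L+}f_+,\,\Phi_m\bigr)_{L^2(\Sigma_+)}}$. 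Moving the operators on the $\Sigma_+$ factor onto $\Phi_m$ (multiplication by $(-1)^{\omega_1}$ is self-adjoint and $U_{L+}^*=V_{L+}$) turns the second factor into $\overline{\langle f_+\,|\,\widetilde\Phi_m\rangle}$, while the first is $\langle\widetilde\Psi'_n\,|\,f_-\rangle$ with $\widetilde\Psi'_n\in P(\Sigma_-)'$ the transpose of the continuous operator $V_{R-}(-1)^{\omega_0}$ on $P(\Sigma_-)$ applied to $\Psi'_n$. Thus $Q_1^\times(\Phi_m\otimes\Psi'_n)$ agrees, on every elementary tensor of $X_+$, with $\widetilde\Phi_m\otimes\widetilde\Psi'_n\in P(\Sigma_+)\otimes P(\Sigma_-)'$; since such tensors span $X_+$, the two anti-linear functionals coincide, which together with Lemma~\ref{lemma4-6} is (1). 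Evaluating $\widetilde\Phi_m\otimes\widetilde\Psi'_n$ against $\Phi'_{m'}\otimes\Psi_{n'}$ by~(\ref{the bilinear form}), and using $\langle\widetilde\Psi'_n\,|\,\Psi_{n'}\rangle=\langle\Psi'_n\,|\,V_{R-}(-1)^{\omega_0}\Psi_{n'}\rangle$ (definition of the transpose) and $\widetilde\Phi_m=V_{L+}(-1)^{\omega_1}\Phi_m$, produces the identity of (2), where the operator written $V_{L-}(-1)^{\omega_0}$ there is $V_{R-}(-1)^{\omega_0}$. For (3): by the degree-lowering fact $\widetilde\Phi_m\in\mathrm{span}\{\Phi_0,\dots,\Phi_{m-1}\}$, so $\langle\Phi'_{m'}\,|\,V_{L+}(-1)^{\omega_1}\Phi_m\rangle=0$ once $m'\ge m$; likewise $V_{R-}(-1)^{\omega_0}\Psi_{n'}\in\mathrm{span}\{\Psi_0,\dots,\Psi_{n'-1}\}$, so $\langle\Psi'_n\,|\,V_{R-}(-1)^{\omega_0}\Psi_{n'}\rangle=0$ once $n\ge n'$; hence the product in (2) vanishes whenever $m'\ge m$ or $n'\le n$.

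For (4), expand $\widetilde\Phi_m=\sum_{i\le m-1}\alpha_i\Phi_i$ — a genuinely finite sum, since $\widetilde\Phi_m\in P_{m-1}(\Sigma_+)$ — and $\widetilde\Psi'_n=\sum_j\beta_j\Psi'_j$ with respect to the weak $*$ topology of $P(\Sigma_-)'$; the latter is the analogue of Lemma~\ref{lemma4-2} (for $g\in P_M(\Sigma_-)$ the pairing $\langle\widetilde\Psi'_n\,|\,g\rangle$ is a finite sum), and by (3) only indices $j>n$ occur. Recombining, $Q_1^\times(\Phi_m\otimes\Psi'_n)=\sum_{i,j}\alpha_i\beta_j\,\Phi_i\otimes\Psi'_j$, converging with respect to the weak $*$ topology of $X_+'$: against any $\mu_+\otimes f_-\in X_+$, only the finitely many terms with $i\le m-1$ and with $j\le M$ (where $f_-\in P_M(\Sigma_-)$, so $\langle\Psi'_j\,|\,f_-\rangle=0$ for $j>M$) contribute — this is the finiteness asserted in the parenthetical remark of the statement, using $\Psi_j\in P_j(\Sigma_-)$. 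Pairing this identity with $\Phi'_{i'}\otimes\Psi_{j'}$ and invoking the orthogonality relation of Proposition~\ref{prop: 04261347}(2) identifies $\alpha_{i'}\beta_{j'}=\langle Q_1^\times(\Phi_m\otimes\Psi'_n)\,|\,\Phi'_{i'}\otimes\Psi_{j'}\rangle$, which is the asserted expansion. I expect the main obstacle to be bookkeeping rather than depth: the $\Sigma_+$ factor is paired through $L^2(\Sigma_+)\subset P(\Sigma_+)'$ (so an \emph{adjoint}, $U_{L+}^*=V_{L+}$, surfaces there), whereas the $\Sigma_-$ factor is paired through the rigging $P(\Sigma_-)\subset P(\Sigma_-)'$ (so a \emph{transpose} surfaces there), and one must keep careful track of the complex conjugations imposed by the anti-linearity of $\langle\cdot\,|\,\cdot\rangle$; the degree-lowering computation and its transport through $i$ are the only non-routine ingredients, and they are exactly what pin down the index constraints $m'<m$ and $n'>n$.
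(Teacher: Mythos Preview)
Your approach is correct and essentially mirrors the paper's own proof: both identify $Q_1^\times(\Phi_m\otimes\Psi'_n)$ as the elementary tensor $(V_{L+}(-1)^{\omega_1}\Phi_m)\otimes\widetilde\Psi'_n$, establish the degree-lowering $V_{L+}(-1)^{\omega_1}:P_m(\Sigma_+)\to P_{m-1}(\Sigma_+)$ via the explicit formulas $h(0*\omega)=h/2$, $h(1*\omega)=(1+h)/2$, and read off (2)--(4) from this factorisation. Your write-up is in fact slightly more careful than the paper's in two places --- you make the adjoint/transpose distinction on the two tensor factors explicit when passing through the bilinear form~(\ref{the bilinear form}), and you correctly flag that the ``$V_{L-}$'' in the statement is a typo for $V_{R-}$ (the paper's own proof of (2) arrives at $V_{R-}$).
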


\begin{proof}
(1) By a similar calculation to the proof of Lemma~\ref{lemma4-8}, 
\begin{eqnarray*}
Q_1^\times \Phi_m\otimes \Psi'_n
&=& (V_{L+}(-1)^{\omega _1} \Phi_m)(\omega_1, \omega _2,\cdots )\otimes \bigl( (-1)^{\omega _0} U_{R-} \bigr)^\times \Psi'_n \\
&=& \frac{1}{2}\left( \Phi_m (0,\omega _1, \omega _2, \cdots ) - \Phi_m(1, \omega _1, \omega _2,\cdots ) \right)
\otimes\bigl( (-1)^{\omega _0} U_{R-} \bigr)^\times \Psi'_n.
\end{eqnarray*}
For the first factor, recall that $\Phi_m$ is a monic polynomial of $h$ of degree $m$.
In particular, the leading term (the highest degree term in $h$) of 
$\Phi_m (0,\omega _1, \omega _2, \cdots ) - \Phi_m(1, \omega _1, \omega _2,\cdots )$ is (see also Prop.\ref{prop: behavior of h via PF op})
\begin{eqnarray*}
& & h(0, \omega _1,\cdots )^m - h(1, \omega _1,\cdots )^m \\
& = & \left( \frac{0}{2} + \frac{\omega _1}{2^2} + \cdots  \right)^m
  - \left( \frac{1}{2} + \frac{\omega _1}{2^2} + \cdots  \right)^m \\
&=& \frac{1}{2^m}h^m - \frac{1}{2^m} \left( 1 + h \right)^m.
\end{eqnarray*}
This is a polynomial of $h$ of degree $m-1$, which implies that
$V_{L+}(-1)^{\omega _1} \Phi_m \in P_{m-1}(\Sigma _+) \subset  P(\Sigma _+)$.
For the second factor, let us show that for any $f_- \in P(\Sigma_-)$, $\langle \bigl( (-1)^{\omega _0}  U_{R-} \bigr)^\times \Psi'_n 
      \,|\, f_- \rangle$ exists and continuous in $f_-$
 \[ \langle\bigl( (-1)^{\omega _0}  U_{R-} \bigr)^\times \Psi'_n \,|\, f_- \rangle
 = \langle  \Psi'_n \,|\, \bigl( (-1)^{\omega _0}  U_{R-} \bigr)^* f_- \rangle 
 = \langle  \Psi'_n \,|\, V_{R-}(-1)^{\omega _0}f_- \rangle. \]
 Since $f_{-}$ is a linear combination of $\Psi_m$'s, we can verify that $ V_{R-}(-1)^{\omega _0}f_- \in P(\Sigma_-)$ by the same way as above for the first factor $V_{L+}(-1)^{\omega _1} \Phi_m$.
 Since $V_{L+}$ is continuous, $\langle  \Psi'_n \,|\, V_{R-}(-1)^{\omega _0}f_- \rangle$ tends to zero as $f_- \to 0$ in $P(\Sigma_-)$.
This proves $\bigl( (-1)^{\omega _0}  U_{R-} \bigr)^\times \Psi'_n \in  P(\Sigma _-)'$.

(2) Due to the statement (1), the bilinear form $\langle Q_1^\times \Phi_m\otimes \Psi'_n  \,|\, \Phi'_{m'} \otimes \Psi_{n'}  \rangle$ is well-defined. 
It is calculated by the definition as
\begin{eqnarray*}
& & \langle Q_1^\times \Phi_m\otimes \Psi'_n  \,|\, \Phi'_{m'} \otimes \Psi_{n'}  \rangle \\
&=& \langle V_{L+}(-1)^{\omega _1} \Phi_m \otimes \bigl( (-1)^{\omega _0}  U_{R-} \bigr)^\times \Psi'_n  \,|\, 
       \Phi'_{m'} \otimes \Psi_{n'}  \rangle \\
&=& \langle \bigl( (-1)^{\omega _0}U_{R-}\bigr)^\times \Psi'_n \,|\, \Psi_{n'} \rangle \cdot 
       \overline{\langle \Phi'_{m'} \,|\,V_{L+}(-1)^{\omega _1} \Phi_m \rangle} \\
&=& \langle \Psi'_n \,|\, U^*_{R-}(-1)^{\omega _0} \Psi_{n'} \rangle \cdot 
       \overline{\langle \Phi'_{m'} \,|\,V_{L+}(-1)^{\omega _1} \Phi_m \rangle} \\
&=& \langle \Psi'_n \,|\, V_{R-}(-1)^{\omega _0} \Psi_{n'} \rangle \cdot 
       \overline{\langle \Phi'_{m'} \,|\,V_{L+}(-1)^{\omega _1} \Phi_m \rangle}.
\end{eqnarray*}

(3) From the proof of (1), we found that $V_{L+}(-1)^{\omega _1} \Phi_m \in P_{m-1}(\Sigma _+)$
is a polynomial of $h$ of degree $m-1$.
Hence, it is a linear combination of $\Phi_0, \Phi_1, \cdots ,\Phi_{m-1}$.
Since $\langle \Phi'_m \,|\, \Phi_n \rangle = \delta _{m,n}$, 
it turns out that $\langle \Phi'_{m'} \,|\,V_{L+}(-1)^{\omega _1} \Phi_m \rangle= 0$
when $m' \geq m$.
Similarly, $\langle \Psi'_n \,|\, V_{R-}(-1)^{\omega _0} \Psi_{n'} \rangle = 0$
when $n' \leq n$.

The proof of (4) is the same as that of Prop.~\ref{prop4-7}.
\end{proof}$\,$
\\
\textbf{(iii)} $\bm{k\geq 2 :}$ In this case, 
\begin{eqnarray*}
A_k(\lambda)
= \sum^\infty_{m,n}\frac{c_{m,n}}{\lambda - 2^{-(m+n)}} 
 \Bigl\langle \bigl( Q_1^\times \circ (\lambda - Q_0)^{-1}\bigr)^{k-1} Q_1^\times \Phi_m\otimes \Psi'_n \,|\, (\overline{\lambda} - Q^*_0)^{-1} g \Bigr\rangle .
\end{eqnarray*}
By Lemma~\ref{lemma4-12}(4), $Q_1^\times \Phi_m\otimes \Psi'_n$ is expanded as
\begin{eqnarray*}
Q_1^\times \Phi_m\otimes \Psi'_n  
 = \sum^\infty_{i_1,j_1} \langle Q_1^\times \Phi_m\otimes \Psi'_n  \,|\, \Phi'_{i_1} \otimes \Psi_{j_1} \rangle 
\cdot \Phi_{i_1} \otimes \Psi'_{j_1}.
\end{eqnarray*}
Substituting it into the above $A_k(\lambda)$ gives
\begin{eqnarray*}
A_k(\lambda ) &=&
\sum^\infty_{m,n}\sum^\infty_{i_1, j_1} 
\frac{c_{m,n}}{\lambda - 2^{-(m+n)}} \langle Q_1^\times \Phi_m\otimes \Psi'_n  \,|\, \Phi'_{i_1} \otimes \Psi_{j_1} \rangle  \\
& & \quad \times \Bigl\langle (Q_1^\times \circ (\lambda - Q_0)^{-1})^{k-1} \Phi_{i_1}\otimes \Psi'_{j_1} \,|\, (\overline{\lambda} - Q^*_0)^{-1} g \Bigr\rangle.
\end{eqnarray*}
Hence, (\ref{4-20}) yields
\begin{eqnarray*}
A_k(\lambda ) &=&
\sum^\infty_{m,n}\sum^\infty_{i_1, j_1} 
\frac{c_{m,n}}{\lambda - 2^{-(m+n)}} \frac{1}{\lambda -2^{-(i_1 + j_1)}} 
     \langle Q_1^\times \Phi_m\otimes \Psi'_n  \,|\, \Phi'_{i_1} \otimes \Psi_{j_1} \rangle  \\
& & \quad \times \Bigl\langle (Q_1^\times \circ (\lambda - Q_0)^{-1})^{k-2} Q_1^\times \Phi_{i_1}\otimes \Psi'_{j_1} \,|\, (\overline{\lambda} - Q^*_0)^{-1} g \Bigr\rangle .
\end{eqnarray*}
Repeating this procedure, we obtain
\begin{eqnarray*}
& & A_k(\lambda ) \\
&=& 
\sum^\infty_{m,n}\sum^\infty_{i_1, j_1} \cdots \sum^\infty_{i_{k-1}, j_{k-1}} 
\left( \frac{c_{m,n}}{\lambda - 2^{-(m+n)}}
\frac{1}{\lambda -2^{-(i_1 + j_1)}}\cdots \frac{1}{\lambda -2^{-(i_{k-1} + j_{k-1})}} \right) \\
& & \qquad \times \langle Q_1^\times \Phi_m\otimes \Psi'_n  \,|\, \Phi'_{i_1} \otimes \Psi_{j_1} \rangle
 \langle Q_1^\times \Phi_{i_1}\otimes \Psi'_{j_1}  \,|\, \Phi'_{i_2} \otimes \Psi_{j_2} \rangle
\times \cdots \\
& & \qquad\quad \times \langle Q_1^\times \Phi_{i_{k-1}}\otimes \Psi'_{j_{k-1}}  \,|\,  (\overline{\lambda} - Q^*_0)^{-1} g \rangle.
\end{eqnarray*}
To summarize (ii) and (iii) with \eqref{4-20b}, for any $k \ge 1$, we have
\begin{eqnarray*}
& & A_k(\lambda ) \\
&=& 
\sum^\infty_{m,n}\sum^\infty_{i_1, j_1} \cdots \sum^\infty_{i_{k-1}, j_{k-1}} \sum^\infty_{i_k,j_k} \\
& & 
\left( \frac{c_{m,n}}{\lambda - 2^{-(m+n)}}
\frac{1}{\lambda -2^{-(i_1 + j_1)}}\cdots \frac{1}{\lambda -2^{-(i_{k-1} + j_{k-1})}}\frac{\overline{d_{i_k,j_k}}}{\lambda - 2^{-(i_k+j_k)}} \right) \\
& & \qquad \times \langle Q_1^\times \Phi_{m}\otimes \Psi'_{n}  \,|\, \Phi'_{i_1} \otimes \Psi_{j_1} \rangle
 \langle Q_1^\times \Phi_{i_1}\otimes \Psi'_{j_1}  \,|\, \Phi'_{i_2} \otimes \Psi_{j_2} \rangle
\times \cdots \\
& & \qquad\quad \times \langle Q_1^\times \Phi_{i_{k-1}}\otimes \Psi'_{j_{k-1}}  \,|\,  \Phi'_{i_k} \otimes \Psi_{j_k} \rangle.
\end{eqnarray*}
Lemma~{\ref{lemma4-12}} (3) simplifies it as
\begin{eqnarray}
& & A_k(\lambda ) \nonumber \\
&=& \sum_{\Omega } 
\frac{c_{m,n}}{\lambda - 2^{-(m+n)}}\frac{1}{\lambda -2^{-(i_1 + j_1)}}
  \cdots \frac{1}{\lambda -2^{-(i_{k-1} + j_{k-1})}}\frac{\overline{d_{i_k,j_k}}}{\lambda - 2^{-(i_k + j_k)}} \nonumber \\
&\times & \langle Q_1^\times \Phi_m\otimes \Psi'_n  \,|\, \Phi'_{i_1} \otimes \Psi_{j_1} \rangle \times \cdots \times \langle Q_1^\times \Phi_{i_{k-1}}\otimes \Psi'_{j_{k-1}}  \,|\,  \Phi'_{i_k} \otimes \Psi_{j_k} \rangle, \nonumber \\
\label{Ak}
\end{eqnarray}
where the set $\Omega $ is defined by
\begin{eqnarray*}
\Omega := \{ 0\leq i_k<i_{k-1}< \cdots < i_1 < m,\,\, 0\leq n<j_{1}< \cdots < j_{k-1} < j_k\}.
\end{eqnarray*}
In particular, the inequalities $j_l \geq l$ and $i_l \geq k-l$ hold,
which implies $i_l + j_l \geq k$ ($m+n \geq k$ also holds).
This shows that $A_k(\lambda )$ has no singularities $\lambda =1, 2^{-1} ,\cdots , 2^{-(k-1)}$, however, $\lambda =2^{-k}$ may be a pole of $A_k(\lambda)$ of order $k+1$ because (\ref{Ak}) includes $k+1$ factors of the form $\lambda - 2^{-(i+j)}$ in the denominator.

\begin{lemma}
$A_k(\lambda)$ has a pole $\lambda = 2^{-k}$ of order $k+1$.
\end{lemma}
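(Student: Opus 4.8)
The plan is to single out, inside the finite sum (\ref{Ak}) that expresses $A_k(\lambda)$, the one index configuration whose $k+1$ denominator factors are all singular at $\lambda=2^{-k}$, and then to check that the scalar weight attached to that configuration does not vanish. The paragraph preceding the lemma already records that every denominator factor occurring in (\ref{Ak}) has the form $\lambda-2^{-(i+j)}$ with $i+j\ge k$ (using $m\ge k$, $i_l\ge k-l$, $j_l\ge l$ on the set $\Omega$); hence such a factor is singular at $\lambda=2^{-k}$ exactly when its index-sum equals $k$, so the pole of $A_k$ at $2^{-k}$ has order at most $k+1$.

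First I would note that imposing the equalities $i_l+j_l=k$ on top of the inequalities $i_l\ge k-l$, $j_l\ge l$ forces $i_l=k-l$ and $j_l=l$ for $l=1,\dots,k-1$; similarly $m+n=k$ with $m\ge k$ forces $(m,n)=(k,0)$, after which $i_k+j_k=k$ together with $j_k\ge k$ forces $(i_k,j_k)=(0,k)$. So there is exactly one multi-index in $\Omega$ making all $k+1$ denominator factors equal to $\lambda-2^{-k}$, namely the ``diagonal'' path $(m,n)=(k,0)$, $(i_l,j_l)=(k-l,\,l)$ for $l=1,\dots,k$, and its contribution to (\ref{Ak}) is
\[
\frac{c_{k,0}\,\overline{d_{0,k}}}{(\lambda-2^{-k})^{k+1}}\,\prod_{l=1}^{k}\big\langle\,Q_1^\times\Phi_{k-l+1}\otimes\Psi'_{l-1}\ \big|\ \Phi'_{k-l}\otimes\Psi_{l}\,\big\rangle .
\]
Every other term of (\ref{Ak}) has at most $k$ of its factors singular at $2^{-k}$, hence a pole there of order at most $k$; consequently the coefficient of $(\lambda-2^{-k})^{-(k+1)}$ in the Laurent expansion of $A_k$ at $2^{-k}$ is precisely the numerator above, and the lemma reduces to showing this numerator is nonzero for some $f\in X_-$, $g\in X_+$.

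The main step is therefore the non-vanishing of the diagonal weight. By Lemma~\ref{lemma4-12}(2) each factor of the product splits as $\langle\Psi'_{l-1}\,|\,V_{R-}(-1)^{\omega_0}\Psi_{l}\rangle\cdot\overline{\langle\Phi'_{k-l}\,|\,V_{L+}(-1)^{\omega_1}\Phi_{k-l+1}\rangle}$, so it suffices to show that the subdiagonal pairing $\langle\Phi'_{p-1}\,|\,V_{L+}(-1)^{\omega_1}\Phi_p\rangle$ is nonzero for every $p\ge1$ (the $\Psi$-factor being the same quantity transported by the homeomorphism $i:\Sigma_-\to\Sigma_+$). For this I would reuse the computation in the proof of Lemma~\ref{lemma4-12}(1): the top-degree-in-$h$ part of $V_{L+}(-1)^{\omega_1}\Phi_p$ is $2^{-(p+1)}\big(h^p-(1+h)^p\big)$, a polynomial in $h$ of degree exactly $p-1$ with leading coefficient $-p\,2^{-(p+1)}\ne0$, and since $\Phi_{p-1}$ is the monic degree-$(p-1)$ eigenvector and $V_{L+}(-1)^{\omega_1}\Phi_p\in\mathrm{span}\{\Phi_0,\dots,\Phi_{p-1}\}$, its $\Phi_{p-1}$-component equals $-p\,2^{-(p+1)}\ne0$. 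Thus the product over $l$ is a nonzero constant, and picking $f$ and $g$ with $c_{k,0}\ne0$ and $d_{0,k}\ne0$ — for instance $f=\Phi_k\otimes\Psi'_0\in X_-$ and $g=\Phi'_0\otimes\Psi_k\in X_+$, which are admissible because $\Phi'_0=\Psi'_0\equiv1$ lie in $L^2$ — makes the whole diagonal weight nonzero. Combined with the previous step this gives a pole of order exactly $k+1$ at $\lambda=2^{-k}$. The only genuine obstacle is the non-vanishing of the subdiagonal matrix elements $\langle\Phi'_{p-1}\,|\,V_{L+}(-1)^{\omega_1}\Phi_p\rangle$; the rest is bookkeeping with the inequalities defining $\Omega$ and with the explicit $L^2$-representatives of $\Phi'_0$ and $\Psi'_0$.
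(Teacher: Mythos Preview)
Your proof is correct and follows essentially the same line as the paper's: isolate the tuple in $\Omega$ for which every denominator factor equals $\lambda-2^{-k}$, and check that the accompanying product of $Q_1^\times$-pairings is nonzero via the leading-$h$-coefficient computation from Lemma~\ref{lemma4-12}(1). Your argument is in fact more careful than the paper's on two points: you observe that the constraints $i_l\ge k-l$, $j_l\ge l$ together with $i_l+j_l=k$ force the diagonal path to be \emph{unique} (so no cancellation among maximal-order terms is possible), and you explicitly exhibit admissible $f\in X_-$, $g\in X_+$ with $c_{k,0}\overline{d_{0,k}}\neq 0$ using $\Phi'_0=\Psi'_0\equiv 1\in L^2$.
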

\begin{proof}
In Lemma~\ref{lemma4-12} (2), it is shown that
\[ \langle Q_1^\times \Phi_m\otimes \Psi'_n  \,|\, \Phi'_{i_1} \otimes \Psi_{j_1}  \rangle
 = \langle \Psi'_n \,|\, V_{L-}(-1)^{\omega _0}\Psi_{j_1}  \rangle \cdot
\overline{\langle \Phi'_{i_1} \,|\, V_{L+}(-1)^{\omega _1}\Phi_{m}  \rangle}. \]
In the proof of Lemma~\ref{lemma4-12} (1), we proved that $V_{L+}(-1)^{\omega _1}\Phi_{m}$ is a linear combination of $\Phi_0, \cdots, \Phi_{m-1}$ (recall that $h^{m-1}$ is a linear combination of $\Phi_0, \cdots, \Phi_{m-1}$).
From its proof, it is easy to confirm that the coefficient of $ \Phi_{m-1}$ is not zero.
Therefore, $\langle \Phi'_{i_1} \,|\, V_{L+}(-1)^{\omega _1}\Phi_{m}  \rangle \neq 0$ when $i_1 = m-1$.

Now we put $(n, j_1) = (k-m, k-m+1)$.
By the same reason as above, $ \langle \Psi'_n \,|\, V_{L-}(-1)^{\omega _0}\Psi_{j_1}  \rangle \neq 0$, and we have $m+n = i_1 + j_1 = k$.
This proves that in the set $\Omega$ of subscripts, there is a tuple $(m,n, i_1, j_1, \cdots)$ such that $\langle Q_1^\times \Phi_m\otimes \Psi'_n  \,|\, \Phi'_{i_1} \otimes \Psi_{j_1}  \rangle \neq 0$ and $m+n = i_1 + j_1 = k$.
By the same argument to the other factors in (\ref{Ak}), we find out that $A_k(\lambda )$ includes a term $1/(\lambda - 2^{-k})^{k+1} \times \text{(nonzero constant)}$.
\end{proof}

We are now in a position to show our main theorem.

\begin{theorem}\label{main thm} For any $0\leq \varepsilon \leq 1$,
\\
(1) The resolvent $(\lambda -V_L(\varepsilon ))^{-1}$ has an analytic continuation
to the inside of the unit circle as an operator from $X_-$ into $X_+'$.
The generalized eigenvalues with respect to the Gelfand triplet $X_- \subset L^2(\Sigma )\subset X_+'$ are $2^{-k},\,\, k=0,1,2,\cdots $.
\\
(2) The generalized eigenspace of the generalized eigenvalue $2^{-k}$ is a $k+1$ dimensional space.
When $\varepsilon \neq 0$,
\begin{eqnarray*}
(\text{algebraic multiplicity}) = k+1, \quad
(\text{geometric multiplicity}) = 1.
\end{eqnarray*}
(3) The statements (1) and (2) hold for the Perron-Frobenius operator $V_L$ of the two-sided shift.
\end{theorem}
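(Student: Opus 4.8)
The plan is to assemble the computations of this subsection into a single statement about the generalized resolvent viewed as an $X_+'$-valued function of $\lambda$. Fix $f\in X_-$ and $g\in X_+$ and start from $\langle(\lambda-V_L(\varepsilon))^{-1}f\,|\,g\rangle=\sum_{k\ge 0}\varepsilon^kA_k(\lambda)$, valid for $|\lambda|$ large. The first thing I would record is that for fixed $f,g$ this is actually a \emph{finite} sum: expanding $f=\sum c_{m,n}\Phi_m\otimes\Psi_n'$ and $g=\sum d_{m,n}\Phi_m'\otimes\Psi_n$ as in Proposition~\ref{prop4-7}, the membership $f\in P(\Sigma_+)\otimes L^2(\Sigma_-)$ forces $c_{m,n}=0$ for $m$ above some $M$, and $g\in L^2(\Sigma_+)\otimes P(\Sigma_-)$ forces $d_{m,n}=0$ for $n$ above some $N$, whereas the index set $\Omega$ in \eqref{Ak} demands a strictly increasing chain $n<j_1<\dots<j_k$ with $j_k\le N$ and a strictly decreasing chain $i_1>\dots>i_k\ge 0$ with $i_1<m\le M$; so no nonzero term survives once $k>\min(M,N)$, and each $A_k$ is itself a finite sum. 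Hence $\lambda\mapsto\langle(\lambda-V_L(\varepsilon))^{-1}f\,|\,g\rangle$ is a finite sum of rational functions all of whose poles lie in $\{2^{-k}\}_{k\ge 0}$, so it continues from $|\lambda|>1$ to $\C\setminus\{2^{-k}\}_{k\ge 0}$ and in particular past the continuous spectrum on the unit circle. Since this continuation is continuous and anti-linear in $g$, for each such $\lambda$ it defines an element of $X_+'$ that depends holomorphically on $\lambda$; this is the generalized resolvent, which gives the analytic-continuation claim in (1) and shows that the generalized spectrum is contained in $\{2^{-k}\}_{k\ge 0}$.

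For the reverse inclusion I would use the lemma just proved above, that $A_k$ has a pole of order $k+1$ at $2^{-k}$. Since $A_j$ is regular at $2^{-p}$ whenever $p\le j-1$, the term $A_j$ with $j>k$ is regular at $2^{-k}$, while $A_j$ with $j<k$ has at most a pole of order $j+1\le k$ there because \eqref{Ak} for $A_j$ carries only $j+1$ denominators. Therefore, for $\varepsilon\ne0$ and the $f,g$ exhibited in that lemma, the coefficient of $(\lambda-2^{-k})^{-(k+1)}$ in $\sum_j\varepsilon^jA_j(\lambda)$ equals $\varepsilon^k$ times the corresponding nonzero coefficient of $A_k$, so $2^{-k}$ is a genuine pole; for $\varepsilon=0$ only $A_0$ remains, and it has a simple pole at $2^{-k}$ for suitable $f,g$. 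Either way $2^{-k}$ lies in the generalized spectrum, and being an isolated pole of finite order of the generalized resolvent it is a generalized eigenvalue with finite-dimensional generalized eigenspace by Theorems~3.19 and~3.5 of \cite{Chi2}. This proves (1) for every $\varepsilon\in[0,1]$.

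For (2) I would argue by perturbation in $\varepsilon$, which is why the parameter was introduced. At $\varepsilon=0$, Theorem~\ref{thm4-10} gives that the generalized eigenspace of $2^{-k}$ for $Q_0$ has dimension $k+1$ with geometric and algebraic multiplicities both $k+1$. By the finiteness above, $(\lambda-V_L(\varepsilon))^{-1}=\sum_{j=0}^{\min(M,N)}\varepsilon^jA_j(\lambda)$ is a polynomial in $\varepsilon$ with $\varepsilon$-independent coefficients, so the generalized resolvent — and with it the generalized Riesz projection $\Pi_{2^{-k}}(\varepsilon)=\frac{1}{2\pi i}\oint_C(\lambda-V_L(\varepsilon))^{-1}\,d\lambda$ over a small circle $C$ about $2^{-k}$ — depends continuously on $\varepsilon$; moreover, by (1) the generalized spectrum is the \emph{same} set $\{2^{-j}\}_{j\ge0}$ for all $\varepsilon\in[0,1]$, so $C$ separates $2^{-k}$ from the rest of the spectrum uniformly in $\varepsilon$. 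The perturbation theory for generalized spectra (\cite{Chi2}; cf.~\cite{Chi1,Chi5}) then forces the rank of $\Pi_{2^{-k}}(\varepsilon)$ — the dimension of the generalized eigenspace, hence the algebraic multiplicity — to be constant on $[0,1]$, so it is $k+1$. Finally, the order of the pole of the generalized resolvent at $2^{-k}$ equals the size of the largest Jordan block of $(V_L(\varepsilon)^\times-2^{-k})$ on the generalized eigenspace; for $\varepsilon\ne0$ this order is $k+1$ by (1), and since it cannot exceed the algebraic multiplicity $k+1$, the whole $(k+1)$-dimensional eigenspace is a single Jordan block and the geometric multiplicity is $1$. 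Part (3) is then immediate from Lemma~\ref{lemma4-8}, since $V_L=V_L(1)$.

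The step I expect to be most delicate is this perturbation argument: one must justify that the rank of the continuously varying generalized Riesz projections $\Pi_{2^{-k}}(\varepsilon)$ is locally constant even though these maps go from $X_-$ into the different space $X_+'$, which is where the finite-dimensionality of the image together with the stability results of \cite{Chi2} are needed. Once that is in place, the passage from ``pole order $=$ algebraic multiplicity'' to ``geometric multiplicity $=1$'' is elementary linear algebra on a finite-dimensional space, and the combinatorial input to (1) — finiteness of the $\varepsilon$-series and non-cancellation of the top-order pole — is essentially already contained in the lemmas above and only needs to be collected.
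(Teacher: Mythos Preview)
Your proposal is correct and follows essentially the same route as the paper: the analytic continuation via the $A_k$ formulas, the perturbation argument in $\varepsilon$ using the $\varepsilon$-independence of the generalized spectrum to keep the rank of the Riesz projection constant (hence algebraic multiplicity $k+1$), and the identification of the pole order $k+1$ with a single Jordan block (hence geometric multiplicity $1$). You supply more justification than the paper does on two points the paper leaves implicit --- the finiteness of the $\varepsilon$-series for fixed $f,g$ and the non-cancellation of the top-order pole among the $A_j$ --- but the architecture of the argument is the same.
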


\begin{proof}
(1) has been already obtained. If (2) is true, (3) is trivial because $V_L = V_L(1)$. Let us prove the statement (2).

Let $W_k (\varepsilon ) \subset X_+'$ be the generalized eigenspace associated with the generalized eigenvalue
$\lambda = 2^{-k}$.
This is the image of the generalized Riesz projection (Section 2)
\begin{eqnarray*}
W_k(\varepsilon ) = \frac{1}{2\pi i} \mathrm{Im} \oint_{C_k} (z-V_L(\varepsilon ))^{-1} dz,
\end{eqnarray*}
where $C_k$ is a positively oriented closed curve, which encircles an eigenvalue $2^{-k}$
but no other eigenvalues.
When $\varepsilon =0$ (i.e. $V_L(\varepsilon ) = Q_0$), $W_k(0)$ is a $k+1$ dimensional space 
given by $W_k(0) = \mathrm{span} \{ \Phi_i \otimes \Psi'_j \}_{i+j = k}$
as was shown in Theorem~\ref{thm4-10}.
Since the generalized eigenvalues are independent of $\varepsilon $, there are no eigenvalues which cross the integral pass $C_k$ as $\varepsilon$ varies.
This means that $\mathrm{dim}\, W_k(\varepsilon ) = k+1 \,(= \text{the algebraic multiplicity})$ even when $\varepsilon \neq 0$.
Since $\lambda =2^{-k}$ is a pole of $(\lambda -V_L(\varepsilon ))^{-1}$ of order $k+1$,
the geometric multiplicity is one.
\end{proof}


\textbf{\large{Acknowledgments.}}
This work is supported by a JST CREST Grant, Number JPMJCR1913, Japan, 
JST ACTX Grant, Number JPMJAX2004, Japan, 
and 2019 IMI Joint Use Research Program.



\begin{thebibliography}{99}
\setlength{\baselineskip}{0pt}

\bibitem{AAD}
A.A. Abramov, A. Aslanyan, and E.B. Davies,
Bounds on complex eigenvalues and resonances,
J. Phys. A 34, (2001) 57-72.

\bibitem{ADKM97}
I. Antoniou, L. Dmitrieva, Yu. Kuperin, Yu. Melnikov,
Resonances and the extension of dynamics to rigged Hilbert space,
Comput. Math. Appl. 34, no. 5-6, 399-425 (1997).

\bibitem{ASS99}
I. Antoniou, V. A. Sadovnichii, S. A. Shkarin,
New extended spectral decompositions of the Renyi map,
Phys. Lett. A 258, no. 4-6, 237-243 (1999).

\bibitem{AT92a}
I.~Antoniou, S.~Tasaki,
Generalized spectral decomposition of the $\beta$-adic baker's transformation and intrinsic irreversibility,
Phys. A 190 , no. 3-4, 303-329 (1992). 

\bibitem{AT92}
I.~Antoniou, S.~Tasaki,
Spectral decomposition of the Renyi map,
J.~Phys.~A: Math.~Gen.~{\bf 26} 73--94 (1993)

\bibitem{AT93}
I.~Antoniou, S.~Tasaki,
Spectral Decompositions for Evolution Opertors of Mixing Dynamical Systems,
 Int. J. Quantum Chem. 46 425-74 (1993).
 
\bibitem{Bo86}
A. Bohm,
Quantum mechanics. Foundations and applications,
Texts and Monographs in Physics. Springer-Verlag, New York, 1986.

\bibitem{BG89}
A. Bohm, M. Gadella,
Dirac kets, Gamow vectors and Gelfand triplets. The rigged Hilbert space formulation of quantum mechanics,
Lecture Notes in Physics, 348. Springer-Verlag, Berlin, 1989.

\bibitem{Chi1}
H. Chiba,
A proof of the Kuramoto conjecture for a bifurcation structure of the infinite-dimensional Kuramoto model,
Ergodic Theory Dynam. Systems 35, no. 3, 762--834 (2015).

\bibitem{Chi2}
H. Chiba,
A spectral theory of linear operators on rigged Hilbert spaces under analyticity conditions,
Adv. in Math. 273, 324--379 (2015).

\bibitem{Chi3}
H. Chiba,
A spectral theory of linear operators on rigged Hilbert spaces under analyticity conditions II : applications to Schr\"odinger operators, 
Kyushu Journal of Math. 72, 375-405 (2018).

\bibitem{Chi4}
K. Kotani, A. Akao, H. Chiba, 
Bifurcation of the neuronal population dynamics of the modified theta model: transition to macroscopic gamma oscillation, 
Physica D, Vol.416, 132789, (2021).

\bibitem{Chi5}
H. Chiba, G. S. Medvedev, M. S. Muzuhara, 
Bifurcations in the Kuramoto model on graphs, Chaos, 28, 073109, (2018).

\bibitem{CFS}
I. P. Cornfeld, S. Fomin, Ya. Sinai,
Ergodic theory,
Springer-Verlag, New York, 1982.

\bibitem{Fox97}
R. F. Fox,
Construction of the Jordan basis for the baker map,
Chaos 7, no. 2, 254-269 (1997).

\bibitem{Fro97}
R. Froese, 
Asymptotic distribution of resonances in one dimension, 
J. of Diff. Equations, 137(2), (1997), 251-272.

\bibitem{Gel2}
I. M. Gelfand, G. E. Shilov,
Generalized functions. Vol. 2. Spaces of fundamental and generalized functions,
Academic Press, New York-London, 1968.

\bibitem{Gel}
I. M. Gelfand, N. Ya. Vilenkin,
Generalized functions. Vol. 4. Applications of harmonic analysis,
Academic Press, New York-London, 1964.

\bibitem{Hit}
M. Hitrik, 
Bounds on scattering poles in one dimension, 
Comm. Math. Phys. 208(1999), 381-411.

\bibitem{HS92}
H. Hasegawa, W. C. Saphir,
Unitarity and irreversibility in chaotic systems,
Phys. Rev. A (3) 46 , no. 12, 7401-7423 (1992).

\bibitem{HD94}
H. Hasegawa, D. J. Driebe,
Intrinsic irreversibility and the validity of the kinetic description of chaotic systems,
Phys. Rev. E (3) 50, no. 3, 1781-1809 (1994). 

\bibitem{Gas92}
P. Gaspard,
$r$-adic one-dimensional maps and the Euler summation formula,
J. Phys. A 25, no. 8, L483-L485 (1992).

\bibitem{Raa1851}
J.~L.~Raabe, Zur\"uckf\"uhrung einiger Summen und bestimmten Integrale auf die Jakob Bernoullische Function,
J.~reine angew.~Math.~{\bf 42}, 348--376, (1851).

\bibitem{Ru86}
D. Ruelle,
Locating resonances for Axiom A dynamical systems,
J. Statist. Phys. 44, no. 3-4, 281-292 (1986).

\bibitem{Sim}
B. Simon,
Resonances in one dimension and Fredholm determinants,
J. Funct. Anal. 178(2000), 396-420.

\bibitem{SATB96}
Z. Suchanecki, I. Antoniou, S. Tasaki,O. F. Bandtlow,
Rigged Hilbert spaces for chaotic dynamical systems,
J. Math. Phys. 37, no. 11, 5837-5847 (1996). 

\bibitem{Tre}
F. Tr\'{e}ves,
Topological vector spaces, distributions and kernels,
Academic Press, New York-London, 1967

\bibitem{Wal}
P. Walters,
An introduction to ergodic theory,
Graduate Texts in Mathematics, 79. Springer-Verlag, 1982.

\bibitem{Zwo}
M. Zworski,
Resonances in Physics and Geometry,
Notices of the AMS, 46 no.3, March, 1999.


\end{thebibliography}
\end{document}